\setlist[enumerate,1]{label=(\arabic*), ref=(\arabic*), itemsep=0em}
\numberwithin{equation}{section}
     \def\Amat{X}
        \def\Bmat{Y}
        \def\Cmat{Z}
    \newcommand{\acta}{\circ_{\scriptscriptstyle A}}
    \newcommand{\actb}{\circ_{\scriptscriptstyle B}}
    \newcommand{\actc}{\circ_{\scriptscriptstyle C}}
    \newcommand{\otR}{\ot_{\cA}}
      \newcommand{\alg}[1]{\cA_{111}^{#1}} 
\newtheoremstyle{custom}
  {3pt}
  {3pt}
  {\slshape}
  {}
  {\bfseries}
  {.}
  { }
   {}
\theoremstyle{custom}
\newtheorem{theorem}{Theorem}[section]
\newtheorem{proposition}[theorem]{Proposition}
\newtheorem{proposition/definition}[theorem]{Proposition/Definition}
\newtheorem{lemma}[theorem]{Lemma}
\newtheorem{corollary}[theorem]{Corollary}
\theoremstyle{definition}
\newtheorem{definition}[theorem]{Definition}
\newtheorem{example}[theorem]{Example}
\theoremstyle{remark}
\newtheorem{remark}[theorem]{Remark}
\newtheoremstyle{exercise}
  {3pt}
  {6pt}
  {}
  {}
  {\bfseries}
  {:}
  { }
   {}
\theoremstyle{exercise}
\newtheorem{exercise}[theorem]{Exercise}
\newtheoremstyle{exercises}
  {3pt}
  {6pt}
  {}
  {}
  {\bfseries}
  {:}
  {\newline}
   {}
\theoremstyle{exercise}
\newtheorem{exercises}[theorem]{Exercises}
\def\boxit#1{\vbox{\hrule height1pt\hbox{\vrule width1pt\kern3pt
  \vbox{\kern3pt#1\kern3pt}\kern3pt\vrule width1pt}\hrule height1pt}}
\def\trank{\text{rank}}
\def\BC{\mathbb C}
\def\BA{\mathbb A}
\def\BP{\mathbb P}
\def\pp#1{\mathbb P^{#1}}
\def\fgl{\mathfrak g\mathfrak l}
\def\pp#1{{\mathbb P}^{#1}}
\def\tdim{{\rm dim}}
\def\ww{\wedge}
\def\inv{{}^{-1}}
\def\cA{{\mathcal A}}
\def\cE{{\mathcal E}}
\def\cR{{\mathcal R}}
\def\cO{{\mathcal O}}
\def\11{\mathbf 1}
\def\fsl{{\mathfrak {sl}}}
\def\fm{{\mathfrak m}}
\def\a{\alpha}
\def\b{\beta}
\def\g{\gamma}
\def\s{\sigma}
\def\d{\delta}
\def\ot{{\mathord{ \otimes } }}
\def\op{{\mathord{\,\oplus }\,}}
\def\ra{{\mathord{\;\rightarrow\;}}}
\def\dim{{\rm dim}\;}
\def\La#1{\Lambda^{#1}}
\def\frak{\mathfrak}
\def\fgl{\frak g\frak l}\def\fsl{\frak s\frak l}
\def\op{\oplus}
\def\BA{\Bbb A}
\def\tann{\text{Ann}\,}
\def\ep{\epsilon}
\def\op{\oplus}
\def\ul{\underline}
\def\s{\sigma}
\def\t{\tau}
\def\a{\alpha}
\def\b{\beta}
\def\g{\gamma}
\def\FS{\mathfrak  S}
\def\BP{\mathbb  P}
\def\BC{\mathbb  C}
\def\pp#1{\mathbb  P^{#1}}
\def\ep{\epsilon}
\def\hd{, \dotsc ,}
\def\inv{{}^{-1}}
\def\La#1{\Lambda^{#1}}
\def\pp#1{\mathbb  P^{#1}}
\def\ur{\underline{\mathbf{R}}}
\def\ra{\rightarrow}
\def\tdet{\operatorname{det}}
\def\tend{\operatorname{End}}
\def\tim{\operatorname{Im}}
\def\tdim{\operatorname{dim}}
\def\tker{\operatorname{ker}}
\def\tmod{\operatorname{mod}}
\def\thom{\operatorname{Hom}}
\def\trank{\operatorname{rank}}
\def\ww{\wedge}
\def\be{\begin{equation}}
\def\ene{\end{equation}}
\def\aaa{{\mathbf{a}}}
\def\trank{\mathbf{R}}
\newcommand{\isom}{\cong}
\newcommand{\End}{\operatorname{End}}
\newcommand{\Id}{\operatorname{Id}}
\newcommand{\Tr}{\operatorname{Tr}}
\newcommand{\Spec}{\operatorname{Spec}}
\newcommand{\Hom}{\operatorname{Hom}}
\newcommand{\tEnd}{\operatorname{End}}
\def\trank{{\mathrm {rank}}}
\def\aaa{\mathbf{a}}
\newcommand{\GL}{\operatorname{GL}}
\newcommand{\SL}{\operatorname{SL}}
\newcommand{\Sym}{\operatorname{Sym}}
\def\bt{\bold t}
\def\lam{\lambda}
\def\trank{{\mathrm {rank}}}
\def\aaa{{\bold a}}
\def\bx{{\bold x}}\def\by{{\bold y}}\def\bz{{\bold z}}
\renewcommand{\a}{\alpha}
\renewcommand{\b}{\beta}
\renewcommand{\g}{\gamma}
\renewcommand{\BC}{\mathbb{C}}
\renewcommand{\d}{\delta}
 \renewcommand{\tilde}{\widetilde}
\begin{document}

\author{Joachim  Jelisiejew,  J. M. Landsberg,  and Arpan Pal}

\address{Department of Mathematics, Informatics and Mechanics, University of
Warsaw, Banacha 2, 02-097, Warsaw, Poland}
\email[J.  Jelisiejew]{jjelisiejew@mimuw.edu.pl}
\address{Department of Mathematics, Texas A\&M University, College Station, TX 77843-3368, USA}
\email[J.M. Landsberg]{jml@math.tamu.edu}
\email[A. Pal]{arpan@tamu.edu}

\title[Concise tensors of minimal border rank]{Concise tensors of minimal border rank}

\thanks{Landsberg   supported by NSF grants  AF-1814254 and AF-2203618.
Jelisiejew supported by National Science Centre grant 2018/31/B/ST1/02857.}

\keywords{Tensor rank,      border rank,  secant variety, Segre variety, Quot scheme,
spaces of commuting matrices, spaces of bounded rank,  smoothable rank, wild tensor, 111-algebra}

\subjclass[2010]{68Q15, 15A69, 14L35}

\begin{abstract}
 We determine   defining  equations   for  the set
 of concise tensors of minimal border rank in $\BC^m\ot \BC^m\ot \BC^m$
 when $m=5$ and the set of  concise  minimal border rank  $1_*$-generic tensors when $m=5,6$.
 We solve the classical problem in algebraic complexity
 theory of classifying minimal border rank tensors in the special case $m=5$.
 Our proofs utilize two recent developments: the 111-equations defined by 
 Buczy\'{n}ska-Buczy\'{n}ski 
and results of  Jelisiejew-\v{S}ivic   on the variety of commuting
matrices.
We introduce a new   algebraic invariant of a concise tensor, its 111-algebra,
and exploit it to give a strengthening of  Friedland's normal form for $1$-degenerate tensors
satisfying  Strassen's equations. We use the 111-algebra to  characterize wild minimal border rank tensors and
 classify them in $\BC^5\ot \BC^5\ot \BC^5$.
\end{abstract}

\maketitle


\section{Introduction}

This paper is motivated by algebraic complexity theory and the   study of secant varieties 
in algebraic geometry. It takes first steps towards overcoming complexity  lower bound barriers
first identified in 
  \cite{MR3761737,MR3611482}. It also provides new ``minimal cost'' tensors
  for Strassen's laser method to upper bound the exponent of matrix multiplication that are not known to be subject to    the   
  barriers   identified  in \cite{MR3388238}
  and later refined in numerous works, in particular \cite{blser_et_al:LIPIcs:2020:12686} which
  shows there are barriers for   minimal border rank {\it binding}  tensors (defined below), as our new tensors are not binding.

Let $T\in \BC^m\ot \BC^m\ot \BC^m=A\ot B\ot C$ be a tensor.
One says $T$  has {\it rank one} if
$T=a\ot b\ot c$ for some nonzero $a\in A$, $b\in B$, $c\in C$, and
the   {\it rank} of  $T$, denoted $\bold R(T)$,  is the smallest $r$ 
such that $T$ may be written as a sum of $r$ rank one tensors. 
The {\it border rank} of $T$, denoted $\ur(T)$,   is the smallest $r$ such that 
$T$ may be written as a limit of a sum of $r$ rank one tensors. In geometric
language, the border rank is  smallest $r$ such that $T$
belongs to   the $r$-th secant variety of the Segre variety, 
  $\s_r(Seg(\pp{m-1}\times \pp{m-1}\times\pp{m-1}))\subseteq \BP (\BC^m\ot \BC^m\ot \BC^m)$.

  Informally, a tensor $T$ is {\it concise} if it  cannot be expressed as a tensor in a smaller ambient space. (See \S\ref{results} for the precise definition.)
A concise tensor $T\in \BC^m\ot \BC^m\ot \BC^m $ must have border rank at least $m$, and if the border rank
equals $m$, one says that $T$ has {\it minimal border rank}.

As stated in \cite{BCS}, tensors of minimal border rank are important for algebraic
complexity theory as they
are ``an important building stone in the construction of fast matrix multiplication
algorithms''.  More precisely, tensors of minimal border rank have
produced the best upper bound on the exponent of matrix multiplication 
\cite{MR91i:68058,stothers,williams,LeGall:2014:PTF:2608628.2608664,MR4262465}  via Strassen's laser method \cite{MR882307}.
Their investigation also has a long history in classical algebraic
geometry  as the study of secant varieties of Segre varieties. 

Problem 15.2 of \cite{BCS} asks to classify concise tensors of minimal border rank.
This is now understood to be an extremely difficult question. The difficulty
manifests itself in two substantially different ways:
\begin{itemize}
    \item {\it Lack of structure.} Previous to this paper,  an important  class  of tensors ({\it $1$-degenerate}, see \S\ref{results})
    had no or few known structural properties. In other words, little is known
        about the geometry of  singular loci  of secant varieties.
    \item {\it Complicated geometry.} Under various genericity  hypotheses  that enable  one to avoid
    the previous difficulty,   the classification
        problem reduces to hard problems in  algebraic geometry: for example
        the classification of minimal border rank {\it binding} tensors (see~\S\ref{results})
        is equivalent to classifying smoothable
        zero-dimensional schemes in  affine space~\cite[\S 5.6.2]{MR3729273}, a longstanding
        and generally viewed as impossible problem in algebraic geometry, which is
        however solved for $m\leq 6$~\cite{MR576606, MR2459993}.
\end{itemize}

The main contributions of this paper are as follows: (i) we give equations for  the set of
concise minimal border rank tensors for $m\leq 5$ and classify them, (ii)  we
discuss and consolidate the theory of minimal border rank $1_*$-generic  tensors,  extending their characterization
in terms of equations
to $m\leq 6$,   and (iii)
we introduce a new structure associated to  a   tensor, its {\it 111-algebra},   and investigate new invariants of 
minimal border rank tensors coming from
the  111-algebra.

Our  contributions allow one to  streamline   proofs of earlier results.
This results from the power of the 111-equations, and  the
utilization of the ADHM correspondence discussed below. While the second leads to much shorter
proofs and enables one to avoid using the
classification results of \cite{MR2118458, MR3682743}, there is a price to be paid
as the  language and machinery  of modules and the Quot scheme need to be introduced.
This language will be essential in future work, as it provides the only proposed path to overcome
the lower bound  barriers of  \cite{MR3761737,MR3611482}, namely {\it deformation theory}. We emphasize
that this paper is the first direct use of deformation theory in the study of tensors. Existing results from
deformation theory were previously used in \cite{MR3578455}.

Contribution (iii) addresses the \emph{lack of structure} and motivates
many new open questions, see~\S\ref{sec:questions}.

\subsection{Results on tensors of minimal border rank}\label{results}

Given $T\in A\ot B\ot C$, we may consider it as a linear map $T_C: C^*\ra A\ot B$. We let $T(C^*)\subseteq A\ot B$
denote its image, and similarly for permuted statements. A tensor $T$ is {\it $A$-concise} if
the map  $T_A $ is injective, i.e., if it requires all basis vectors in  $A$ to
write down $T$ in any basis, and $T$ is {\it concise} if it is $A$, $B$, and $C$ concise.

A tensor $T\in \BC^\aaa\ot \BC^m\ot \BC^m$ is {\it $1_A$-generic} if $T(A^*)\subseteq B\ot C$ contains an element of
 rank $m$ and when $\aaa=m$,  $T$ is {\it $1$-generic} if it is $1_A$, $1_B$, and $1_C$ generic.
Define a tensor $T\in \BC^m\ot \BC^m\ot \BC^m$  to be 
{\it $1_*$-generic} if it is at least one of  $1_A$, $1_B$, or $1_C$-generic, and {\it binding}
if it is at least two of $1_A$, $1_B$, or $1_C$-generic.
We say $T$ is {\it $1$-degenerate} if it is not $1_*$-generic. 
Note that if $T$ is $1_A$ generic, it is both $B$ and $C$ concise. In particular,
binding tensors are concise.

Two classical sets of equations on tensors that vanish on concise  tensors of minimal border rank
are Strassen's equations and the End-closed equations. These are discussed in  \S\ref{strandend}.
These equations are sufficient for $m\leq 4$, \cite[Prop. 22]{GSS},
\cite{Strassen505, MR2996364}.

In \cite[Thm~1.3]{MR4332674} the following polynomials for minimal border rank were  introduced:
Let $T\in A\ot B\ot C=\BC^m\ot \BC^m\ot \BC^m$.
Consider the map
\be\label{111map}
 (T(A^*)\ot A)\op (T(B^*)\ot B) \op (T(C^*)\ot C)\ra A\ot B\ot C \oplus A\ot B\ot C
 \ene
 that sends $(T_1, T_2,T_3)$ to $(T_1 - T_2, T_2 - T_3)$,
 where the $A$, $B$, $C$ factors of tensors are understood to be in
  the correct positions, for example
  $T(A^*)\ot A$ is more precisely written as $A\ot T(A^*)$.
If $T$ has border rank at most $m$, then the rank of the above map is
at most $3m^2-m$. The resulting equations are called the {\it 111-equations}.

Consider the space
\be\label{111sp}
(T(A^*)\ot A)\cap (T(B^*)\ot B) \cap (T(C^*)\ot C).
\ene
  We call this space
  the \emph{triple intersection} or the \emph{111-space}.
  We
  say that $T$ is \emph{111-abundant} if the inequality
  \begin{equation}\label{eq:111}  {(111\mathrm{-abundance})}\ \ 
        \tdim\big((T(A^*)\ot A)\cap (T(B^*)\ot B) \cap (T(C^*)\ot
        C)\big)\geq m
    \end{equation}\stepcounter{equation}%
    holds. If   equality holds, we say    $T$ is \emph{111-sharp}.
    When $T$ is concise, 111-abundance is equivalent to requiring that 
the equations of \cite[Thm 1.3]{MR4332674} are satisfied, i.e., the map \eqref{111map} has rank at most $3m^2-m$.

\begin{example}\label{Wstate111} For $T=a_1\ot b_1\ot c_2+ a_1\ot b_2\ot c_1+ a_2\ot b_1\ot c_1\in \BC^2\ot \BC^2\ot \BC^2$,
a tangent vector to the Segre variety, also called the $W$-state in the quantum literature, the triple intersection 
is
$\langle T, a_1\ot b_1\ot c_1\rangle$.
\end{example}

We show that for concise tensors, the  111-equations imply both Strassen's equations and the End-closed equations:

\begin{proposition}\label{111iStr+End} Let $T\in \BC^m\ot \BC^m\ot \BC^m$ be concise.
If $T$ satisfies the 111-equations then it also  satisfies Strassen's equations and the End-closed
    equations. If $T$ is $1_A$ generic, then it satisfies the 111-equations 
    if and only if it  satisfies the $A$-Strassen equations and the $A$-End-closed
    equations.
\end{proposition}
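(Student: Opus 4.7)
The plan is to reinterpret the triple intersection in \eqref{eq:111} as a space of compatible endomorphism triples of $T$. By conciseness, the flattening $T_A\colon A^*\to T(A^*)\subseteq B\ot C$ is bijective, so $T(A^*)\ot A\subseteq A\ot B\ot C$ is canonically identified with $\End(A)$ via $\Phi\leftrightarrow\phi_A$, where $\phi_A$ is the unique endomorphism with $\Phi=(\phi_A\ot\mathrm{id}_B\ot\mathrm{id}_C)T$; analogous identifications hold for $B$ and $C$. A tensor $\Phi$ lies in the triple intersection iff there is a (unique) triple $(\phi_A,\phi_B,\phi_C)\in\End(A)\op\End(B)\op\End(C)$ with
\[
\Phi=(\phi_A\ot\mathrm{id}\ot\mathrm{id})T=(\mathrm{id}\ot\phi_B\ot\mathrm{id})T=(\mathrm{id}\ot\mathrm{id}\ot\phi_C)T.
\]
Let $\mathcal{S}$ denote the set of such triples; since actions on distinct tensor factors commute, $\mathcal{S}$ is closed under componentwise composition (up to order reversal on two factors).

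To prove the second statement, assume in addition that $T$ is $1_A$-generic, fix $\alpha_0\in A^*$ with $T(\alpha_0)\colon C^*\to B$ an isomorphism, identify $B\simeq C^*$ via $T(\alpha_0)$, and set $X_\alpha:=T(\alpha)T(\alpha_0)^{-1}\in\End(B)$. Then $\mathcal{A}_T^A:=\tspan\{X_\alpha:\alpha\in A^*\}\subseteq\End(B)$ is $m$-dimensional and contains $X_{\alpha_0}=\mathrm{id}_B$, and by definition the $A$-Strassen (resp.~$A$-End-closed) equations assert that $\mathcal{A}_T^A$ is commutative (resp.~closed under composition). Using $T(\alpha)=X_\alpha T(\alpha_0)$ to rewrite the three equalities defining $\mathcal{S}$ slice-by-slice, one checks: the $\alpha_0$-slice of $(\phi_A\ot\mathrm{id}\ot\mathrm{id})T=(\mathrm{id}\ot\phi_B\ot\mathrm{id})T$ forces $\phi_B=X_{\phi_A^T\alpha_0}\in\mathcal{A}_T^A$; the general $\alpha$-slice of the same equality gives $\phi_B X_\alpha\in\mathcal{A}_T^A$; and $(\mathrm{id}\ot\phi_B\ot\mathrm{id})T=(\mathrm{id}\ot\mathrm{id}\ot\phi_C)T$ gives $[\phi_B,X_\alpha]=0$. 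Conversely, any $Z\in\mathcal{A}_T^A$ with $Z\mathcal{A}_T^A\subseteq\mathcal{A}_T^A$ and $[Z,\mathcal{A}_T^A]=0$ extends uniquely to a triple in $\mathcal{S}$ by reversing these steps. Hence $\Phi\mapsto\phi_B$ is a bijection
\[
\mathcal{S}\;\longleftrightarrow\;\{Z\in\mathcal{A}_T^A:Z\mathcal{A}_T^A\subseteq\mathcal{A}_T^A\text{ and }[Z,\mathcal{A}_T^A]=0\}\;\subseteq\;\mathcal{A}_T^A.
\]
Since $\dim\mathcal{A}_T^A=m$, the condition $\dim\mathcal{S}\geq m$ (i.e.\ 111-abundance) forces the RHS to equal all of $\mathcal{A}_T^A$, which is exactly the conjunction of $A$-Strassen (commutativity) and $A$-End-closed (closure under composition). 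The converse is immediate from the bijection.

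For the first statement, the same analysis applies on the $1_*$-generic concise locus (using whichever of $A,B,C$ yields genericity) and forces 111-abundance to imply Strassen's and End-closed equations on this open dense set. Since all three systems are polynomial, hence cut out Zariski-closed subvarieties of the space of tensors, the inclusion of zero loci extends by taking closures. The main technical obstacle is to ensure that $1_*$-generic 111-abundant concise tensors are dense in the 111-abundant concise locus; this can be handled either by an explicit deformation argument within the 111-abundant locus, or by recognizing the Strassen and End-closed polynomial expressions as direct algebraic consequences of the composition structure on $\mathcal{S}$ that do not rely on the existence of a generic slice.
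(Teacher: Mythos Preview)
Your treatment of the second assertion (the $1_A$-generic case) is correct and is essentially identical to the paper's Proposition~\ref{1Ageneric111}: you identify the triple intersection with the set of $Z\in\cE_{\alpha_0}(T)$ that commute with and normalize $\cE_{\alpha_0}(T)$, and then observe that this set equals $\cE_{\alpha_0}(T)$ precisely when the latter is abelian and End-closed.

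Your proof of the first assertion, however, has a genuine gap. The closure argument you sketch requires that $1_*$-generic 111-abundant concise tensors are dense in the locus of all 111-abundant concise tensors, and you do not establish this. This density is far from automatic: the 111-abundance condition is not closed on the full tensor space (only on the concise locus, see Remark~\ref{rem:111semicontinuity}), and the existence of $1$-degenerate concise 111-abundant tensors (Theorem~\ref{5isom}) means one would need to produce, for each such tensor, an explicit $1_*$-generic 111-abundant deformation. The paper does this for specific tensors in \S\ref{ex:M2}, but only after substantial work, and no general mechanism is provided. Your second proposed alternative, deducing Strassen and End-closed directly from the algebra structure on $\mathcal{S}$ without a generic slice, is not carried out either.

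The paper's argument for the first assertion is completely different: it is a case analysis on the maximal rank attained in $T(A^*)$ (and symmetrically for $B$, $C$). In the full-rank case it invokes Proposition~\ref{1Ageneric111}. In the corank-one case it invokes the detailed normal form of Proposition~\ref{1Aonedegenerate111}, whose item~\ref{item3} yields the one nontrivial End-closed relation $w_mu_m\in\langle\bx_2,\ldots,\bx_{m-1}\rangle$, while Remark~\ref{ANFFNF} handles Strassen. When the corank is at least two, both the $A$-Strassen and $A$-End-closed equations are vacuous because $T(\alpha)^{\wedge m-1}=0$ for every $\alpha$. This case split avoids any density claim entirely, at the cost of relying on the substantial normal-form result Proposition~\ref{1Aonedegenerate111}.
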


The first assertion is  proved in \S\ref{111impliessectb}.  The second assertion is Proposition \ref{1Ageneric111}.

In \cite{MR2554725},  and more explicitly in \cite{MR3376667}, equations generalizing Strassen's
equations for minimal border rank,  called {\it $p=1$ Koszul flattenings} were introduced. (At the time it was not
clear they were a generalization, see \cite{GO60survey} for a discussion.). 
The  $p=1$ Koszul flattenings of type 210 are  equations that are
the size $ m(m-1)+1 $  minors of the map $T_A^{\ww 1}: A\ot B^*\ra \La 2 A\ot C$ given
by $a\ot \b\mapsto \sum  T^{ijk}\b(b_j) a\ww a_i\ot c_k$.
 Type 201, 120, etc.~are defined by permuting $A$, $B$ and $C$. Together
they are called $p=1$ Koszul flattenings. 
These equations reappear in border apolarity as the $210$-equations, see \cite{CHLapolar}.

\begin{proposition}\label{kyfv111} The $p=1$ Koszul flattenings for minimal border rank and the $111$-equations
are independent, in the sense that neither implies the other, even for
concise tensors in $\BC^m\ot \BC^m\ot \BC^m$.
\end{proposition}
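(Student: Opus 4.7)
The plan is to exhibit two explicit concise tensors in $\BC^m \ot \BC^m \ot \BC^m$, one witnessing each direction of non-implication. Since for $m \leq 4$ Strassen's equations together with the End-closed equations already force minimal border rank, and since both families of conditions sit above these, the first step is to work with $m \geq 5$, where the paper's later structural results provide a rich supply of candidate tensors to draw from.

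For the direction that the $p=1$ Koszul flattenings do not imply the $111$-equations, I would seek a concise tensor $T$ all of whose three Koszul maps $T_A^{\wedge 1}, T_B^{\wedge 1}, T_C^{\wedge 1}$ have rank at most $m(m-1)+1$, yet satisfying
\[
\dim\big((T(A^*)\ot A) \cap (T(B^*)\ot B) \cap (T(C^*)\ot C)\big) < m.
\]
A natural source is the family of $1_A$-generic tensors satisfying the $A$-Strassen equations: by Proposition \ref{111iStr+End}, among these the $111$-equations are equivalent to the $A$-End-closed condition, so a $1_A$-generic tensor satisfying the Koszul flattening bounds but failing $A$-End-closedness would suffice. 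Explicit such examples should be producible by perturbing a $1$-generic minimal border rank model along a direction that preserves the Koszul rank bound but destroys End-closedness.

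For the reverse direction, I would look for a $111$-abundant concise tensor whose Koszul flattening of type $210$ (or a permuted version) has rank exceeding $m(m-1)+1$. Natural candidates arise from $1$-degenerate tensors built via the ADHM correspondence from tuples of commuting matrices, since these can be engineered---by controlling their $111$-algebra---to have triple intersection of dimension at least $m$, while their Koszul flattening ranks are not a priori constrained. Alternatively, one can attempt small block-structured or direct-sum-type constructions where the $111$-intersection is inherited from a minimal border rank summand but the Koszul flattening is driven up by the other summand.

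The main obstacle is the discovery of clean candidates rather than the verification: once a tensor is chosen, conciseness is routine, the triple intersection is pure linear algebra, and each Koszul flattening rank is a single matrix rank calculation. If hand-crafted examples resist analysis, a targeted computer algebra search in the low-dimensional parameter families at $m=5$ should readily produce both witnesses.
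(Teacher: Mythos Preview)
Your proposal has a genuine gap in the second direction. You suggest a computer search at $m=5$ ``should readily produce both witnesses,'' but for the direction ``$111$-equations hold yet some $p=1$ Koszul flattening fails'' this is impossible at $m=5$: by Theorem~\ref{concise5}, every concise $111$-abundant tensor in $\BC^5\ot\BC^5\ot\BC^5$ already has minimal border rank, and minimal border rank tensors satisfy all $p=1$ Koszul flattenings. The paper's witness for this direction is Example~\ref{ex:failureFor7x7} at $m=7$: a $1_A$-generic tensor whose space $\Espace$ is abelian and End-closed (hence $111$-sharp by Proposition~\ref{1Ageneric111}), but whose associated module lies off the principal component of the Quot scheme; one then checks directly that it fails a $p=1$ Koszul flattening. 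So not only is the search dimension wrong, but the natural candidate type is $1_A$-generic, not $1$-degenerate as you propose.

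For the other direction you also invert the natural approach. You look among $1_A$-generic tensors failing $A$-End-closedness, but you would then need all six Koszul flattening types to vanish, and only the types $210$ and $201$ are tied to the $A$-Strassen equations in the $1_A$-generic case; you give no mechanism for controlling the remaining four. The paper instead takes a $1$-degenerate corank-one tensor at $m=5$ (Example~\ref{ex:111necessary}): the data $\bx_2=E_{14}$, $\bx_3=E_{13}$, $\bx_4=E_{34}$ in the normal form of Proposition~\ref{1Aonedegenerate111} gives a concise tensor satisfying Strassen's equations, the End-closed equations, and all $p=1$ Koszul flattenings, yet $\bx_3\bx_4\neq\bx_4\bx_3$ violates condition~\ref{item2} there, so the tensor is not $111$-abundant. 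The point is that in the $1$-degenerate regime the classical equations become nearly vacuous while the $111$-equations retain content, which is exactly why this direction is accessible there.
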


Proposition \ref{kyfv111} follows from      Example~\ref{ex:111necessary} where
the 111-equations are nonzero and   the $p=1$ Koszul flattenings  are zero and 
     Example~\ref{ex:failureFor7x7} where the reverse situation holds.

We extend the characterization of minimal border rank
tensors  under the hypothesis of $1_*$-genericity to dimension $ m=6$, giving two different characterizations:  
 
 \begin{theorem}\label{1stargprim}  Let $m\leq 6$ and consider the set of
        tensors in $\BC^m\ot \BC^m\ot \BC^m$ which are $1_*$-generic and
        concise. The following subsets coincide
        \begin{enumerate}
            \item\label{it:1stargprimOne} the zero set of Strassen's equations  and the End-closed
                equations,
            \item\label{it:1stargprimTwo} 111-abundant tensors,
            \item\label{it:1stargprimThree} 111-sharp tensors,
            \item\label{it:1stargprimFour} minimal border rank tensors.
        \end{enumerate}
        More precisely, in~\ref{it:1stargprimOne}, if the tensor is $1_A$-generic, only the $A$-Strassen and $A$-End-closed conditions
are required.
    \end{theorem}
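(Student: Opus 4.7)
The plan is to establish the cycle $\ref{it:1stargprimFour} \Rightarrow \ref{it:1stargprimThree} \Rightarrow \ref{it:1stargprimTwo} \Rightarrow \ref{it:1stargprimOne} \Rightarrow \ref{it:1stargprimFour}$. Two links are essentially formal: $\ref{it:1stargprimThree} \Rightarrow \ref{it:1stargprimTwo}$ is tautological (equality of dimension implies the inequality), and $\ref{it:1stargprimTwo} \Rightarrow \ref{it:1stargprimOne}$ is immediate from Proposition~\ref{111iStr+End}, since for concise tensors 111-abundance is equivalent to the rank bound on the map~\eqref{111map} (the 111-equations), and these imply Strassen's and the End-closed equations, with only the $A$-variants needed under $1_A$-genericity.

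For the hard direction $\ref{it:1stargprimOne} \Rightarrow \ref{it:1stargprimFour}$, the plan is to assume without loss of generality that $T$ is $1_A$-generic, fix $\alpha_0 \in A^*$ with $T(\alpha_0):B^* \to C$ an isomorphism, and compose with its inverse to embed $A \hookrightarrow \tEnd(B^*)$ as a subspace $\mcA$ containing the identity. The $A$-Strassen equations then read as pairwise commutativity of elements of $\mcA$, and the $A$-End-closed equations as closure under composition; hence $\mcA$ is an $m$-dimensional commutative unital subalgebra of $\tEnd(B^*)$. Via the ADHM correspondence alluded to in the introduction, such an $\mcA$ determines a point in a Quot scheme parameterizing length-$m$ modules over a polynomial ring in $m-1$ variables. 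For $m \leq 6$, the Jelisiejew--\v{S}ivic results on the commuting variety, together with classical smoothability results (Cartwright--Erman--Velasco--Viray; Casnati--Notari), guarantee that every such module lies in the smoothable component, and this smoothability translates under the ADHM dictionary into a degeneration of rank-$m$ tensors to $T$, yielding $\ur(T) \leq m$.

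For the remaining link $\ref{it:1stargprimFour} \Rightarrow \ref{it:1stargprimThree}$, the inequality $\geq m$ in 111-abundance follows by semicontinuity applied to a sequence of rank-$m$ tensors converging to $T$: each rank-one summand of a smooth approximant contributes a line to the triple intersection, these $m$ lines generically span an $m$-dimensional subspace, and the dimension of the triple intersection cannot drop in the limit. To upgrade to equality, the plan is to invoke $1_A$-genericity a second time: under the identification $A \cong \mcA$, the triple intersection is canonically identified with a subspace determined by the module structure of $\mcA$, and a direct dimension count using conciseness forces it to have dimension exactly $m$.

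The main obstacle is the smoothability input used in $\ref{it:1stargprimOne} \Rightarrow \ref{it:1stargprimFour}$: this is precisely where the hypothesis $m \leq 6$ becomes indispensable, since in larger dimension non-smoothable zero-dimensional subschemes of affine space appear and the equivalence between (1) and (4) genuinely breaks down. A secondary technical obstacle is making the ADHM dictionary precise enough to translate a one-parameter smoothing of modules into an explicit limit of rank-$m$ tensors, which is where the module and Quot scheme language emphasized in the introduction becomes unavoidable.
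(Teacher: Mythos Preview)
Your plan is essentially the paper's: the equivalence of \ref{it:1stargprimOne}, \ref{it:1stargprimTwo}, \ref{it:1stargprimThree} is exactly Proposition~\ref{1Ageneric111} (which shows directly that for $1_A$-generic tensors, 111-abundance is equivalent to $\Espace$ being $m$-dimensional, abelian, End-closed, and that abundance forces sharpness), and \ref{it:1stargprimOne}$\Rightarrow$\ref{it:1stargprimFour} goes via the ADHM/Quot-scheme translation and the principal-component question, as you say.

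One point of imprecision to flag: your sketch of \ref{it:1stargprimOne}$\Rightarrow$\ref{it:1stargprimFour} reads as if a single citation to Jelisiejew--\v{S}ivic (plus CEVV and Casnati--Notari) closes the argument. It does not. The CEVV and Casnati--Jelisiejew--Notari smoothability results concern \emph{algebras} (the Hilbert scheme), so they apply only in the binding and $1$-generic subcases respectively; for a merely $1_A$-generic tensor one has a \emph{module}, and the relevant object is the Quot scheme. The paper's \S\ref{quotreview} carries out a genuine case analysis on the Hilbert functions $H_{S/\tann(\ul C)}$ and $H_{\ul C}$, invoking several distinct lemmas from Jelisiejew--\v{S}ivic (the three-generator criterion, the square-zero criterion, the two-eigenvalue deformation trick) and, in one $m=6$ case with $H_{\ul C}=(2,2,2)$, an explicit apolarity computation and a \texttt{VersalDeformations} check. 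Your plan is correct, but you should not expect this step to collapse to a citation.
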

    
    The equivalence of
    \ref{it:1stargprimOne},~\ref{it:1stargprimTwo},~\ref{it:1stargprimThree} in Theorem  \ref{1stargprim} is proved by Proposition \ref{1Ageneric111}.
    The equivalence of~\ref{it:1stargprimOne} and~\ref{it:1stargprimFour} is proved in \S\ref{quotreview}.

For $1_A$-generic tensors, the $p=1$ Koszul flattenings of type 210
        or 201 are
    equivalent to the $A$-Strassen equations, hence they are implied by the
    111-equations in this case. However, the other types are not implied, see
    Example~\ref{ex:failureFor7x7}. 

    The result fails for $m\geq 7$ by \cite[Prop.~5.3]{MR3682743}, see
          Example~\ref{ex:failureFor7x7}. This is due to
the existence of additional components in the {\it Quot scheme}, which we  briefly
discuss here.

The proof of Theorem \ref{1stargprim} introduces new algebraic tools by
reducing the study of $1_A$-generic
tensors satisfying the $A$-Strassen equations  to   {\it deformation theory} in the Quot
scheme (a generalization of the Hilbert scheme,
see~\cite{jelisiejew2021components})  in two steps. First one reduces  to the study of commuting matrices,
which implicitly  appeared
already in \cite{Strassen505}, and was later spelled out in 
in~\cite{MR3682743}, see~\S\ref{1genreview}. Then  one uses the ADHM construction as in \cite{jelisiejew2021components}.
 From this perspective, the tensors satisfying
    \ref{it:1stargprimOne}-\ref{it:1stargprimThree}
correspond to points of the Quot scheme, while tensors
satisfying~\ref{it:1stargprimFour} correspond to points in the {\it principal component}
of the Quot scheme, see \S\ref{prelimrems} for explanations; the heart of the theorem is that
when $m\leq 6$ there is only the principal component.
We expect deformation theory to play an important
role in future work on tensors. As discussed in \cite{CHLapolar}, at this time deformation theory is the {\it only} proposed
 path to
overcoming the lower bound barriers of    \cite{MR3761737,MR3611482}.
As another byproduct of this structure, we obtain the following proposition:

\begin{proposition}\label{Gorgood} A $1$-generic tensor in $\BC^m\ot \BC^m\ot
    \BC^m$ with $m\leq 13$ satisfying the  $A$-Strassen  equations
    has minimal border rank.  A $1_A$ and $1_B$-generic tensor in $\BC^m\ot \BC^m\ot
    \BC^m$ with $m\leq 7$ satisfying the  $A$-Strassen  equations has minimal
    border rank.\end{proposition}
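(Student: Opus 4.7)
The plan is to apply the machinery reviewed (and to be used throughout \S\ref{quotreview}) to translate the conclusions of Theorem~\ref{1stargprim} into the language of the Quot and Hilbert schemes, and then to invoke known irreducibility theorems that hold in precisely the stated ranges of $m$.

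First I would deal with the binding case. A $1_A$-generic tensor satisfying the $A$-Strassen equations yields, via Strassen's classical correspondence, an abelian subspace of commuting endomorphisms in $\tEnd(B)\cong \tEnd(\BC^m)$; adding $1_B$-genericity promotes this data, via the ADHM construction, to a length $m$ quotient of $\BC[x_1,\dots,x_{m-1}]$ localized near the origin, i.e.\ to a point of the Hilbert scheme $\mathrm{Hilb}^m(\BA^{m-1})$. A concise tensor has minimal border rank exactly when the corresponding Hilbert point lies in the smoothable (principal) component. By Cartwright--Erman--Velasco--Viray, $\mathrm{Hilb}^m(\BA^N)$ is irreducible for every $N$ once $m\leq 7$, so every such point is automatically smoothable, giving the second statement.

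For the $1$-generic case, the additional $1_C$-genericity forces a duality: the associated finite $\BC$-algebra acquires an invariant nondegenerate bilinear form, hence is Gorenstein of colength $m$. Thus the tensor corresponds to a point of the Gorenstein locus $\mathrm{Hilb}^{m,\mathrm{Gor}}(\BA^N)$. By the irreducibility theorem of Casnati--Jelisiejew--Notari, this locus is irreducible for all $m\leq 13$ (and all $N$), so again the point lies on the smoothable component and the tensor has minimal border rank.

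The reduction itself is routine once the ADHM dictionary is in place, so the real content is package-level: matching $1_*$-genericity hypotheses with the correct moduli space (full Quot vs.\ Hilbert vs.\ Gorenstein Hilbert), and citing the sharp irreducibility bound available for that space. I expect the main technical point to verify carefully is the Gorenstein conclusion from $1$-genericity, i.e.\ that the symmetric trilinear structure of a $1$-generic tensor descends to a Frobenius form on the algebra produced by the commuting matrices, since everything else then follows mechanically from the cited irreducibility results and the identification of the smoothable component with minimal border rank tensors established in \S\ref{quotreview}.
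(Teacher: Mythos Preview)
Your proposal is correct and follows essentially the same route as the paper: reduce a binding tensor satisfying $A$-Strassen to the multiplication tensor of a rank-$m$ algebra $S/I$ (Proposition~\ref{ref:moduleVsAlgebra}), note that $1$-genericity forces this algebra to be Gorenstein (\S\ref{1gsubsect}), and then invoke smoothability of all degree $\leq 7$ algebras \cite{MR2579394} and all degree $\leq 13$ Gorenstein algebras \cite{MR3404648}. The only imprecision is the phrase ``localized near the origin'': the algebra $S/I$ need not be local, but this is harmless since the cited irreducibility results apply to the full Hilbert scheme, not just the punctual one.
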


Proposition~\ref{Gorgood}
is sharp: the first assertion  does not hold for higher
$m$ by~\cite[Lem.~6.21]{MR1735271} and the second by~\cite{MR2579394}.

Previously it was  known (although not explicitly stated in the literature)
that    the $A$-Strassen
equations combined with the $A$-End-closed conditions imply minimal border
rank  for $1$-generic tensors when $m\leq 13$ and binding tensors when
$m\leq 7$. This can be extracted
from the discussion in \cite[\S 5.6]{MR3729273}.
 
While Strassen's equations and the End-closed equations are nearly useless
for $1$-degenerate tensors, this does not occur for the 111-equations,
as the following result illustrates:
 
 \begin{theorem}\label{concise5}  When $m\leq 5$, the set of  concise
 minimal border rank   tensors in
  $\BC^m\ot \BC^m\ot \BC^m$
is the zero set of the
$111$-equations.
 \end{theorem}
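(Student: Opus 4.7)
The implication ``minimal border rank $\Rightarrow$ 111-equations'' is immediate from the construction recalled after \eqref{111map}: the map has rank at most $3m^2-m$ on any border rank $\leq m$ tensor. So the content of the theorem is the converse: a concise $T\in \BC^m\ot\BC^m\ot\BC^m$ with $m\leq 5$ satisfying the 111-equations has $\underline{\mathbf{R}}(T)\leq m$. My plan is to split into cases according to $m$ and according to whether $T$ is $1_*$-generic.

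By Proposition~\ref{111iStr+End}, a concise $T$ satisfying the 111-equations also satisfies Strassen's equations and the End-closed equations. For $m\leq 3$ conciseness alone forces $\underline{\mathbf{R}}(T)=m$, and for $m=4$ the combination of Strassen and End-closed equations is already known to cut out the concise minimal border rank locus by \cite[Prop.~22]{GSS} and \cite{Strassen505,MR2996364}. So these cases are essentially free. For $m=5$ and $T$ that is $1_*$-generic, Theorem~\ref{1stargprim} applies: the $1_*$-generic concise tensors satisfying the 111-equations (equivalently, $T$ is 111-abundant) coincide with the $1_*$-generic concise minimal border rank tensors for $m\leq 6$, which handles this case.

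The heart of the proof is the remaining case: $m=5$ and $T$ concise but $1$-degenerate. My plan is to exploit the new structural tools introduced in the paper. First, use our strengthening of Friedland's normal form for $1$-degenerate tensors satisfying Strassen's equations (which is available once 111-equations hold, by Proposition~\ref{111iStr+End}) to reduce $T$ to a short list of candidate normal forms in $\BC^5\ot\BC^5\ot\BC^5$. Second, impose the full 111-equations, repackaged as the triple intersection condition \eqref{eq:111} together with the associative algebra structure on the 111-algebra, to cut down this list further, using $A$-, $B$-, and $C$-conciseness simultaneously. Third, for each surviving normal form produce an explicit curve of rank $5$ tensors degenerating to $T$, thereby certifying $\underline{\mathbf{R}}(T)\leq 5$.

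The main obstacle is the last step. Unlike the $1_*$-generic case, there is no Quot/ADHM black box to invoke, so one has to verify minimal border rank by hand for each $1$-degenerate normal form in $\BC^5\ot\BC^5\ot\BC^5$ that is consistent with conciseness, the three copies of Strassen's equations, and the triple intersection being of dimension at least $5$. The 111-algebra is what makes this manageable: it is a commutative associative algebra of dimension $\geq m$ whose structure severely constrains possible normal forms, and in dimension $5$ the list of such algebras is short enough that each case reduces to constructing (or citing) an explicit rank $5$ approximation. I expect the wild tensors to be the delicate ones, since for these the naive rank is strictly larger than $m$ and the degeneration cannot come from a trivial perturbation; here the classification of wild minimal border rank tensors in $\BC^5\ot\BC^5\ot\BC^5$ promised in the abstract provides the final input.
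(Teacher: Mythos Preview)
Your overall architecture matches the paper's: reduce to the $1$-degenerate case for $m=5$ via Theorem~\ref{1stargprim}, then put $T$ into the corank-one normal form of Proposition~\ref{1Aonedegenerate111}, classify the possibilities, and exhibit a border rank five degeneration for each survivor. That is exactly how \S\ref{m5sect} proceeds.

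There is, however, a genuine gap in your reduction step. The normal form you want to invoke (Friedland's, and its strengthening in Proposition~\ref{1Aonedegenerate111}) applies only when $T(A^*)$ contains a matrix of rank $m-1$. For a $1$-degenerate tensor you know only that $T(A^*)$, $T(B^*)$, $T(C^*)$ are all of bounded rank at most $m-1$; nothing you have said rules out that all three spaces are of bounded rank $\leq m-2$. The paper handles this in \S\ref{noconcise}: Proposition~\ref{5notconcise} uses Atkinson's classification of primitive spaces of bounded rank (\cite{MR695915}) to show that for $m=5$ a concise tensor must have at least one of the three flattenings of corank at most one, so after permuting factors the corank-one normal form applies. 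Without this step your argument cannot get off the ground in the $1$-degenerate case.

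A second, smaller issue: your final paragraph appeals to ``the classification of wild minimal border rank tensors in $\BC^5\ot\BC^5\ot\BC^5$'' as input for verifying minimal border rank. This is circular: by Theorem~\ref{wildthm} the wild minimal border rank tensors are exactly the concise $1$-degenerate minimal border rank tensors, so classifying them \emph{is} what you are trying to do. In the paper, the normal-form analysis (Proposition~\ref{isomRough}, Theorem~\ref{7isom}) produces the candidate list directly, and the proof that each candidate has border rank five is by an explicit deformation to the $1_A$-generic locus (\S\ref{ex:M2}) or an explicit rank-five limit. The classification theorems are outputs, not inputs.
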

We emphasize that no other equations, such as Strassen's equations, are necessary.
Moreover  Strassen's equations, or even their
generalization to the $p=1$ Koszul flattenings,  and the End-closed equations are not enough
to characterize concise
 minimal border rank   tensors in $\BC^5\ot \BC^5\ot \BC^5$, see
 Example~\ref{ex:111necessary} and  \S\ref{111vclass}.

By Theorem \ref{1stargprim}, to prove Theorem \ref{concise5} it remains to prove the $1$-degenerate
case, which is done in \S\ref{m5sect}.
The key difficulty here is the above-mentioned lack of structure. We overcome
this problem by providing a new normal form,  which follows from the
111-equations, that strengthens Friedland's normal form   for  corank one
 $1_A$-degenerate  tensors satisfying Strassen's equations \cite[Thm.
3.1]{MR2996364}, see Proposition~\ref{1Aonedegenerate111}.

It is possible  that Theorem~\ref{concise5}  also holds for $m=6$; this
will be subject to future work. It is false for $m = 7$, as already
Theorem~\ref{1stargprim} fails when $m= 7$.

The $1_*$-generic tensors of minimal border rank in $\BC^5\ot\BC^5\ot \BC^5$ are essentially classified in \cite{MR3682743},
following the classification of abelian linear spaces in \cite{MR2118458}.
We write ``essentially'', as the list has redundancies and it remains to determine the precise list.
Using our normal form, we complete (modulo the redundancies in the $1_*$-generic case) the classification
of concise minimal border rank tensors:

\begin{theorem}\label{5isom}
    Up to the action of $\GL_5(\BC)^{\times 3} \rtimes \FS_3$, there are exactly five
concise $1$-degenerate, minimal border rank tensors in $\BC^5\ot\BC^5\ot
\BC^5$.
Represented as spaces of matrices, the tensors may be presented as:
\begin{align*}
 T_{\cO_{58}}&=
\begin{pmatrix} x_1& &x_2 &x_3 & x_5\\
x_5 & x_1&x_4 &-x_2 & \\
  & &x_1 & & \\
   & &-x_5 & x_1& \\
   & & &x_5  & \end{pmatrix}, 
   \ \
  T_{\cO_{57}} =
\begin{pmatrix} x_1& &x_2 &x_3 & x_5\\
 & x_1&x_4 &-x_2 & \\
  & &x_1 & & \\
   & & & x_1& \\
   & & &x_5  & \end{pmatrix}, 
\\
T_{\cO_{56}} &=
\begin{pmatrix} x_1& &x_2 &x_3 & x_5\\
    & x_1 +x_5 & &x_4 & \\
  & &x_1 & & \\
   & & & x_1& \\
   & & &x_5  & \end{pmatrix}, 
\ \ 
   T_{\cO_{55}}=
\begin{pmatrix} x_1& &x_2 &x_3 & x_5\\
 & x_1&  x_5  &x_4 & \\
 & &x_1 & & \\
   & & & x_1& \\
   & & &x_5  & \end{pmatrix}, \ \
   T_{\cO_{54}} =
\begin{pmatrix} x_1& &x_2 &x_3 & x_5\\
 & x_1& &x_4 & \\
  & &x_1 & & \\
   & & & x_1& \\
   & & &x_5  & \end{pmatrix}. 
 \end{align*}  
   
 In tensor notation: set
$$T_{\mathrm{M1}} = a_1\ot(b_1\ot c_1+b_2\ot c_2+b_3\ot c_3+b_4\ot c_4)+a_2\ot
b_3\ot c_1 + a_3\ot b_4\ot c_1+a_4\ot b_4\ot c_2+a_5\ot(b_5\ot c_1+ b_4\ot
c_5)$$
 and 
 $$T_{\mathrm{M2}} = a_1\ot(b_1\ot c_1+b_2\ot c_2+b_3\ot c_3+b_4\ot
c_4)+a_2\ot( b_3\ot c_1-b_4\ot c_2) + a_3\ot b_4\ot c_1+a_4\ot b_3\ot
c_2+a_5\ot(b_5\ot c_1+b_4\ot c_5).
$$ 
 Then 
\begin{align*}
   T_{\cO_{58}}= &T_{\mathrm{M2}} + a_5 \ot (b_1 \ot c_2 - b_3 \ot
    c_4)
    \\
       T_{\cO_{57}}=&T_{\mathrm{M2}}
    \\
    T_{\cO_{56}}=  &T_{\mathrm{M1}}  + a_5 \ot b_2 \ot
    c_2 
    \\
    T_{\cO_{55}}=  &T_{\mathrm{M1}}  + a_5 \ot b_3 \ot c_2 
   \\
     T_{\cO_{54}}=   &T_{\mathrm{M1}}.
\end{align*}
Moreover, each subsequent tensor lies in the closure of the orbit of previous:
    $T_{\cO_{58}}\unrhd T_{\cO_{57}}\unrhd T_{\cO_{56}}\unrhd
    T_{\cO_{55}}\unrhd T_{\cO_{54}}$. 
\end{theorem}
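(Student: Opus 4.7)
The plan is to start from Theorem~\ref{concise5}, which (combined with Theorem~\ref{1stargprim} handling the $1_*$-generic case) says that the $1$-degenerate concise minimal border rank tensors in $\BC^5\ot\BC^5\ot\BC^5$ are exactly those cut out by the $111$-equations. We may therefore assume $T$ is concise, $1$-degenerate and satisfies the $111$-equations, and the task becomes one of orbit classification under $\GL_5(\BC)^{\times 3}\rtimes\FS_3$.

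First I would invoke the strengthened Friedland-type normal form of Proposition~\ref{1Aonedegenerate111}: after an $\FS_3$ permutation we may assume $T$ is (say) $1_A$-degenerate, and by the enhanced normal form coming from the $111$-equations the space $T(A^*)\subset B\ot C$ becomes a space of $5\times 5$ matrices whose first four rows/columns carry a concise $1_A$-generic minimal border rank substructure (encoded by four pairwise commuting nilpotent $4\times 4$ matrices modulo scalars, via the $\ADHM$-style reformulation recalled in the introduction), while the fifth row and column are governed by a pair of vectors/covectors subject to explicit compatibility equations. This reduces the classification to a finite-dimensional parameter problem: given the $4\times 4$ commuting-matrix data from the $m\leq 6$ Quot-scheme analysis, identify the admissible ``extensions'' to a $5\times 5$ concise $1$-degenerate minimal border rank tensor, modulo the residual stabilizer.

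Next I would enumerate. The $4\times 4$ commuting quadruples of nilpotent matrices coming from the principal component of the Quot scheme fall into finitely many orbits; for each, I would solve the compatibility linear equations for the $(a_5,b_5,c_5)$-data, and then quotient by the stabilizer. The bookkeeping should yield a short list, and the $111$-algebra invariant introduced in the paper provides a clean tool to separate orbits: compute the $111$-algebra of each candidate and verify that the five listed tensors $T_{\cO_{54}},\dots,T_{\cO_{58}}$ have mutually non-isomorphic algebras (up to the $\FS_3$-symmetry of the construction), certifying non-equivalence. To close the classification I would check, case by case, that each tensor in my list is actually concise, $1$-degenerate, and $111$-sharp, so that by Theorem~\ref{concise5} it has minimal border rank.

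Finally, to establish the degeneration chain $T_{\cO_{58}}\unrhd T_{\cO_{57}}\unrhd T_{\cO_{56}}\unrhd T_{\cO_{55}}\unrhd T_{\cO_{54}}$ I would exhibit explicit one-parameter curves $g(t)\cdot T_{\cO_{k}}$ with $g(t)\in\GL_5^{\times 3}$ whose $t\to 0$ limits are the next tensor in the chain; the tensor-notation presentation above is designed so that scaling one $a_i,b_j,c_k$-direction by $t$ kills the distinguishing summand (e.g.\ $a_5\ot(b_1\ot c_2-b_3\ot c_4)$ for $T_{\cO_{58}}\to T_{\cO_{57}}$, the summand $a_5\ot b_2\ot c_2$ or $a_5\ot b_3\ot c_2$ for $T_{\cO_{56}},T_{\cO_{55}}\to T_{\cO_{54}}$) without changing the $1$-generic backbone. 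The main obstacle will be the orbit-separation step: proving that the five candidates are pairwise non-equivalent under the full symmetry group $\GL_5(\BC)^{\times 3}\rtimes\FS_3$, which is where the $111$-algebra invariant is essential, and ensuring that the enumeration of admissible extensions of the $4\times 4$ substructure is genuinely exhaustive rather than only generic.
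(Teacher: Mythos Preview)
Your outline has the right general shape but contains two concrete gaps and one circularity.

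\textbf{Circularity with Theorem~\ref{concise5}.} In the paper, Theorem~\ref{concise5} in the $1$-degenerate case is proved \emph{by} carrying out the classification in \S\ref{m5sect}: one classifies all concise $1$-degenerate $111$-abundant tensors (obtaining the five listed) and then proves each has minimal border rank directly. So you cannot invoke Theorem~\ref{concise5} either at the start (to reduce to $111$-abundant) or at the end (to conclude minimal border rank). The paper instead establishes minimal border rank for $T_{\cO_{58}}$ by an explicit degeneration from a $1_A$-generic End-closed commuting tuple (\S\ref{ex:M2}) and by an explicit rank-five border decomposition; the other four then follow from the degeneration chain.

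\textbf{Orbit separation.} You propose to distinguish the five orbits using their $111$-algebras. The paper does \emph{not} do this, and it is not clear it would work: all five tensors are $111$-sharp with $5$-dimensional $111$-algebra, and Example~\ref{ex:symmetricTensor} already shows $T_{\cO_{58}}$ has $111$-algebra $\BC[\varepsilon_1,\ldots,\varepsilon_4]/(\varepsilon_i\varepsilon_j)$; the others plausibly have the same one. The paper separates orbits by computing the dimensions of the full symmetry Lie algebra and its $AB$-, $BC$-, $CA$-parts (the table before the proof of Theorem~\ref{7isom}), which distinguishes all seven $\GL_5^{\times 3}$-orbits and, after identifying two pairs under the $\FS_3$-action, yields exactly five.

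\textbf{The missing structural step.} Your enumeration plan (``the $4\times 4$ commuting quadruples \ldots fall into finitely many orbits; for each, solve the compatibility equations'') skips the key reduction. The paper first proves (Proposition~\ref{isomRough}) that the $\bx_s$ are nilpotent with $2$-dimensional joint image equal to the joint kernel, forcing the block form $\bx_s=\begin{pmatrix}0&\chi_s\\0&0\end{pmatrix}$ with $\chi_s\in\tend(\BC^2)$. It then uses the trace pairing on $\tend(\BC^2)$: the $3$-dimensional space $\langle\chi_2,\chi_3,\chi_4\rangle$ is the orthogonal of a single matrix $P$, and the rank of $P$ gives the dichotomy \eqref{eq:M1}/\eqref{eq:M2}. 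Only then does the analysis of $u_5$, $w_5$, $\bx_5$ become tractable. Without this block reduction the ``enumerate extensions'' step is not finite in any obvious way.

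Finally, the degeneration chain in the paper is not obtained by naive scalings of summands but by varying the normalized parameters $p_3$, $q_2$, $q_3$, $q_4$ inside the families found during the classification, together with the $\FS_3$-identifications $\eqref{M1aParams}\simeq\eqref{M2s0}$ and $\eqref{M1aNoParams}\simeq\eqref{M1bQ2}$; in particular the step $T_{\cO_{57}}\unrhd T_{\cO_{56}}$ crosses from case~\eqref{eq:M2} to case~\eqref{eq:M1} and needs these identifications.
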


The   subscript  in the   name   of  each tensor is the dimension
of its $\GL(A)\times \GL(B)
\times \GL(C)$ orbit in projective space $\mathbb{P}(A\ot B\ot C)$. Recall that $\tdim \s_5(Seg(\pp 4\times\pp 4\times \pp 4))=64$
and that it is the orbit closure of the so-called unit tensor $[\sum_{j=1}^5a_j\ot b_j\ot c_j]$.

Among these tensors, $T_{\cO_{58}}$ is (after a change of basis) the unique symmetric
tensor on the list (see Example~\ref{ex:symmetricTensor} for its symmetric
version).
The subgroup of
$\GL(A)\times \GL(B)
\times \GL(C)$ preserving  $T_{\cO_{58}}$ contains a copy of $\GL_2\BC$ while
all other stabilizers are solvable.

 \medskip

The {\it smoothable rank}   of a tensor $T\in A\ot B\ot C$ is the minimal degree of a
    smoothable zero dimensional scheme $\Spec(R)\ \subseteq
        \mathbb{P}A\times \mathbb{P}B\times \mathbb{P}C $ which satisfies the condition $T\in \langle \Spec(R) \rangle$.
See, e.g.,  \cite{MR1481486, MR3724212} for basic definitions regarding zero dimensional schemes.
 
   The  smoothable rank of a polynomial with respect to the Veronese variety  was introduced in  \cite{MR2842085}
 and   generalized to points with respect to  arbitrary projective varieties in \cite{MR3333949}. It arises because the span of  the (scheme theoretic)   limit of points 
 may be smaller than the limit of the spans. The smoothable rank lies between  rank
 and border rank. Tensors (or polynomials) whose smoothable rank is larger than
 their border rank are called {\it wild} in \cite{MR3333949}.
 The first example of a wild tensor occurs in $\BC^3\ot \BC^3\ot \BC^3$, see   \cite[\S 2.3]{MR3333949} and it has minimal border rank.
 We   characterize 
 wild minimal border rank tensors:

 \begin{theorem}\label{wildthm} The concise minimal border rank tensors that are wild are precisely the
concise minimal border rank $1_*$-degenerate tensors.
\end{theorem}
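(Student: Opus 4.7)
For a concise minimal border rank tensor $T \in \BC^m \ot \BC^m \ot \BC^m$, the border rank equals $m$, so being wild is equivalent to having smoothable rank strictly greater than $m$. The plan is to prove both implications of the claimed equivalence, with most of the weight on the direction that $1$-degenerate implies wild.

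First I would handle the easier direction: a $1_*$-generic concise minimal border rank tensor is not wild. After permuting factors, assume $T$ is $1_A$-generic. The ADHM-type correspondence invoked in \S\ref{quotreview} in the proof of Theorem \ref{1stargprim} identifies $T$ with a point in the principal (smoothable) component of the relevant Quot scheme; unwinding this identification produces an explicit smoothable length-$m$ subscheme $R \subset \BP A \times \BP B \times \BP C$ with $T \in \langle R \rangle$. Therefore the smoothable rank of $T$ is at most $m$, and combined with the general lower bound by the border rank, equals $m$.

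For the main direction, suppose $T$ is $1$-degenerate, concise, of minimal border rank, and (for contradiction) has smoothable rank equal to $m$. Then $T \in \langle R \rangle$ for some smoothable length-$m$ subscheme $R \subset \BP A \times \BP B \times \BP C$. Choose a smoothing $R_t$ of $R = R_0$ whose general fiber is a reduced $m$-tuple of points, and lift $T$ to a flat section $T_t \in \langle R_t \rangle$ of the family of $m$-dimensional linear spans. Conciseness of $T$ forces the projections of $R_t$ to each factor to be nondegenerate for small $t \neq 0$, so each $T_t$ (for generic $t$) is a concise $1$-generic minimal border rank tensor. The crucial step then exploits the new 111-algebra invariant: the 111-algebras of the $T_t$ assemble into a flat family whose general fiber, via the ADHM correspondence applied to the $1$-generic $T_t$, is the semisimple algebra $\BC^m$. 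Hence the 111-algebra of the limit $T$ is a smoothable $m$-dimensional algebra. However, using the strengthened Friedland normal form of Proposition \ref{1Aonedegenerate111} (which applies because concise tensors satisfying the 111-equations satisfy Strassen's equations, by Proposition \ref{111iStr+End}), one extracts an explicit structural obstruction on the 111-algebra of any $1$-degenerate concise minimal border rank tensor preventing it from being a smoothable $m$-dimensional algebra. This contradiction forces the smoothable rank to exceed $m$.

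The main obstacle is the final step: pinning down the precise algebraic feature of the 111-algebra of a $1$-degenerate tensor that obstructs smoothability as an $m$-dimensional algebra, and verifying it uniformly across the normal forms produced by Proposition \ref{1Aonedegenerate111}. The $1_*$-generic direction is essentially a routine translation through the ADHM machinery, but the $1$-degenerate side is where the novelty of the 111-algebra is essential and where the real deformation-theoretic work lies.
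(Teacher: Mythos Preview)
Your approach to the ``main direction'' ($1$-degenerate $\implies$ wild) has a genuine gap: the obstruction you hope to extract---that the 111-algebra of a $1$-degenerate concise minimal border rank tensor cannot be a smoothable $m$-dimensional algebra---does not exist. Example~\ref{ex:symmetricTensor} exhibits exactly such a tensor in $\BC^5\ot\BC^5\ot\BC^5$ whose 111-algebra is $\BC[\varepsilon_1,\ldots,\varepsilon_4]/(\varepsilon_1,\ldots,\varepsilon_4)^2$, a five-dimensional local algebra which \emph{is} smoothable (every algebra of dimension at most seven is). So the contradiction never materializes, and the strategy cannot be completed as stated. The ``main obstacle'' you flag at the end is not a technicality to be filled in; it is a genuine dead end.

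The paper's argument for this direction is entirely different and much more direct. Proposition~\ref{ref:cactusRank:prop} shows that if a concise tensor $T$ lies in the span of \emph{any} degree-$m$ subscheme $\Spec(R)$ of the Segre, then $T$ is isomorphic to the trilinear form $(r_1,r_2,r_3)\mapsto\varphi(r_1r_2r_3)$ for some $\varphi\in R^*$; conciseness forces $\varphi$ to be nondegenerate, so the bilinear form $(r_1,r_2)\mapsto\varphi(r_1r_2)$, which is $T$ evaluated at $1_R$, has full rank and $T$ is $1$-generic. Hence a $1$-degenerate concise tensor cannot have smoothable rank $m$. No deformation theory or 111-algebra is used here. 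You have in fact swapped which direction needs the 111-algebra: in the paper it enters only in the converse (Theorem~\ref{ref:smoothableRank:thm}), where a concise $1$-generic minimal border rank tensor is identified with the multiplication tensor of its smoothable Gorenstein 111-algebra, and Proposition~\ref{ref:cactusRank:prop} is then run backwards to produce the required subscheme. Your ADHM sketch for that direction does not obviously yield a subscheme of the Segre whose span contains $T$---the Quot scheme parametrizes modules, not such subschemes---so even your ``easy'' direction is not justified as written.
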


Thus   Theorem \ref{5isom} classifies concise wild minimal border rank tensors in $\BC^5\ot\BC^5\ot\BC^5$. 

The proof of Theorem \ref{wildthm} utilizes a new algebraic structure
arising from the triple intersection that we discuss next.

\subsection{The 111-algebra and its uses}\label{111intro}
We emphasize  that 111-abundance, as defined by~\eqref{eq:111},   is a necessary condition for border rank $m$  only  when $T$
    is concise. The condition can be defined for arbitrary tensors and we sometimes allow that.

    \begin{remark}\label{rem:111semicontinuity}
        The condition~\eqref{eq:111} is not closed: for example it does not hold for the zero tensor. It is however
        closed in the set of concise tensors as then $T(A^*)$ varies in the 
        Grassmannian, which is compact.
    \end{remark}

     For   $\Amat\in \tend(A) = A^*\ot A$, let
    $\Amat\acta T$  
    denote the corresponding element of $T(A^*)\ot A$. Explicitly, if $\Amat =
    \alpha\ot a$, then $\Amat \acta T := T(\alpha)\ot a$ and the map $(-)\acta
    T\colon
    \tend(A)\to A\ot B\ot C$ is extended linearly.  Put differently,
    $\Amat \acta T = (\Amat \ot \Id_B \ot \Id_C)(T)$. Define the analogous
    actions of $\tend(B)$ and $\tend(C)$.
  
    \begin{definition}
        Let $T$ be a concise tensor.
        We say that a triple $(\Amat, \Bmat, \Cmat)\in \tend(A)
        \times\tend(B)\times \tend(C)$ \emph{is compatible with} $T$ if
        $\Amat\acta T = \Bmat \actb T = \Cmat \actc T$.
        The \emph{111-algebra} of $T$ is the set of triples
        compatible with $T$.  We denote this set by $\alg{T}$.
    \end{definition}
    The name is justified by the following theorem:
    
      \begin{theorem}\label{ref:111algebra:thm}
        The 111-algebra of a concise tensor $T\in A\ot B\ot C$ is a
        commutative unital
        subalgebra of $\tend(A)\times \tend(B) \times \tend(C)$ and its projection to any factor is injective.
    \end{theorem}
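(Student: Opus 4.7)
The plan is to prove the three assertions — injectivity of the projections, being unital, and being a commutative subalgebra — essentially in that order, with the commutativity and closure under multiplication being proved simultaneously from a single key computation that uses the commutation of the three actions.

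First, I would establish injectivity of the projections using conciseness. Suppose $(\Amat,\Bmat,\Cmat)\in\alg{T}$ with $\Amat=0$. Then $\Bmat\actb T = 0$, which in coordinates means $\sum_j T^{ijk}\Bmat(b_j)=0$ for all $i,k$. Equivalently, $\Bmat$ annihilates the subspace $T(A^*\otimes C^*)\subseteq B$; $B$-conciseness of $T$ says exactly that this subspace equals all of $B$, so $\Bmat=0$, and the same argument with $C$-conciseness gives $\Cmat=0$. By symmetry, injectivity holds for all three projections. Being unital is immediate since $(\Id_A,\Id_B,\Id_C)$ satisfies the compatibility condition trivially, and linearity is obvious.

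The heart of the proof is the product. The crucial fact is that the three actions $\acta,\actb,\actc$ pairwise commute, because they operate on distinct tensor factors. Let $(\Amat_i,\Bmat_i,\Cmat_i)\in\alg{T}$ for $i=1,2$, and write $T_i:=\Amat_i\acta T=\Bmat_i\actb T=\Cmat_i\actc T$. The plan is to compute $\Amat_1\acta T_2$, $\Bmat_1\actb T_2$, and $\Cmat_1\actc T_2$ each three ways, using the three possible expressions for $T_2$ and the pairwise commutation of actions. For instance,
\[
\Amat_1\acta T_2 = \Amat_1\Amat_2\acta T = \Bmat_2\actb(\Amat_1\acta T)=\Bmat_2\Bmat_1\actb T=\Cmat_2\Cmat_1\actc T,
\]
and analogously for $\Bmat_1\actb T_2$ and $\Cmat_1\actc T_2$. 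This yields the three membership statements
\[
(\Amat_1\Amat_2,\,\Bmat_2\Bmat_1,\,\Cmat_2\Cmat_1),\ \
(\Amat_2\Amat_1,\,\Bmat_1\Bmat_2,\,\Cmat_2\Cmat_1),\ \
(\Amat_2\Amat_1,\,\Bmat_2\Bmat_1,\,\Cmat_1\Cmat_2)\ \in \alg{T}.
\]

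Finally, I would subtract these compatible triples pairwise: the first two agree in the $\Cmat$-slot, the second and third in the $\Amat$-slot, so injectivity of the corresponding projections (already established) forces $\Amat_1\Amat_2=\Amat_2\Amat_1$, $\Bmat_1\Bmat_2=\Bmat_2\Bmat_1$, and $\Cmat_1\Cmat_2=\Cmat_2\Cmat_1$. Then all three displayed triples coincide with $(\Amat_1\Amat_2,\Bmat_1\Bmat_2,\Cmat_1\Cmat_2)$, which is therefore in $\alg{T}$, establishing closure under the componentwise product and commutativity at once. The main conceptual step — and the only place where anything nontrivial happens — is recognizing that commutativity of the three actions on $A\otimes B\otimes C$ is exactly what converts the asymmetric-looking orderings $\Bmat_2\Bmat_1$ versus $\Bmat_1\Bmat_2$ into a commutator identity that injectivity can kill; once that observation is in hand the rest is bookkeeping.
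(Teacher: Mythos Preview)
Your proof is correct and follows essentially the same approach as the paper: both rely on the key fact that the three actions $\acta,\actb,\actc$ pairwise commute (the paper's Lemma~\ref{ref:independence:lem}), use this to shuffle the actions around, and then invoke conciseness to kill the resulting commutators. The only difference is organizational: the paper separates commutativity (Lemma~\ref{ref:commutativity:prop}) and closure under composition (Lemma~\ref{ref:Endclosed:prop}) into two lemmas, proving $\Amat\Amat'\acta T=\Amat'\Amat\acta T$ directly and then deducing closure, whereas you produce three ``mixed'' triples in $\alg{T}$ and subtract them to obtain commutativity and closure simultaneously---a slightly more compact packaging of the same computation.
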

    
    Theorem \ref{ref:111algebra:thm} is proved in \S\ref{111algpfsect}.
    
   \begin{example} Let $T$ be as in Example \ref{Wstate111}. Then 
       \[
           \alg{T}=\langle (\Id,\Id,\Id), (a_1\ot\a_2,b_1\ot \b_2,c_1\ot \g_2)\rangle.
       \]
 \end{example}
     
    In this language, the triple intersection is $\alg{T}\cdot T$.
    Once we have an algebra, we may study its modules.
    The spaces $A,B,C$ are all $\alg{T}$-modules: the algebra
    $\alg{T}$ acts on them as it projects to $\tend(A)$, $\tend(B)$, and
    $\tend(C)$. We denote these modules by $\ul{A}$, $\ul{B}$, $\ul{C}$
    respectively.

    Using the 111-algebra, we obtain the following algebraic characterization
    of \emph{all} 111-abundant
    tensors as follows: a tensor $T$ is 111-abundant if it comes from a
    bilinear map $N_1\times N_2\to N_3$ between $m$-dimensional $\cA$-modules, where $\dim \cA
    \geq  m$, $\cA$ is a unital commutative associative algebra and $N_1$, $N_2$, $N_3$
    are $\cA$-modules, see Theorem~\ref{ref:111abundantChar:cor}. This enables an
     algebraic investigation of such tensors and shows how they generalize
    abelian tensors from~\cite{MR3682743}, see
    Example~\ref{ex:1AgenericAndModulesTwo}.
    We emphasize that there are no genericity hypotheses here beyond conciseness, in contrast
    with the $1_*	$-generic case. In particular the characterization applies to
    \emph{all} concise minimal border rank tensors.

In summary,
for a concise tensor $T$ we have defined new  algebraic invariants: the
algebra $\alg{T}$ and its modules $\ul A$, $\ul B$, $\ul C$. There
are four consecutive obstructions for a concise tensor to be of minimal border
rank:
\begin{enumerate}
    \item\label{it:abundance} the tensor must  be 111-abundant. 
        For simplicity of presentation, for the rest of this list we assume that it is 111-sharp
        (compare~\S\ref{question:strictlyAbundant}). We also fix a surjection
        from a polynomial ring $S=\BC[y_1\hd y_{m-1}]$ onto $\alg{T}$
        as follows: fix a basis of $\alg{T}$ with the first basis element
        equal to $(\Id,\Id,\Id)$ and send $1\in S$ to this element, and the variables of $S$ to the
        remaining $m-1$ basis elements. In
        particular $\ul{A}$, $\ul{B}$, $\ul{C}$ become $S$-modules (the
    conditions below do not depend on the choice of surjection). 
    \item\label{it:cactus} the algebra $\alg{T}$ must  be smoothable  (Lemma \ref{ref:triplespanalgebra}),
    \item\label{it:modulesPrincipal} the $S$-modules $\ul A$, $\ul B$, $\ul C$ must  lie in the principal component
    of the Quot scheme, so
        there exist a sequence of  modules $\ul A_{\ep}$ limiting  to $ \ul A$ with general $\ul A_{\ep}$ semisimple,  and similarly
        for $\ul B$, $\ul C$ (Lemma  \ref{ref:triplespanmodules}),
    \item\label{it:mapLimit} the surjective module homomorphism $\ul A\ot_{\alg{T}} \ul B\to \ul C$ associated to $T$ as in
        Theorem~\ref{ref:111abundantChar:cor} must  be a  limit of
       module homomorphisms  $\ul A_\ep\ot_{\cA_\ep} \ul B_\ep \to \ul C_\ep$
       for a
        choice of smooth algebras $\cA_\ep$ and semisimple modules $\ul A_{\ep}$, $\ul B_{\ep}$, $\ul C_{\ep}$. 
\end{enumerate}
  Condition~\ref{it:modulesPrincipal} is
shown to be nontrivial  in Example~\ref{ex:failureFor7x7}.

 In the case of $1$-generic tensors, by Theorem \ref{wildthm}  above, they have minimal border rank
 if and only if they have minimal smoothable rank, that is, they are in the
 span of some zero-dimensional smoothable scheme $\Spec(R)$. Proposition~\ref{ref:cactusRank:prop}
 remarkably shows that one has an algebra isomorphism $\alg{T}\isom R$. This shows that
 to determine if  a given $1$-generic tensor has minimal smoothable rank
 it is enough to determine  smoothability of its 111-algebra, there is
 no choice for $R$. This is in contrast with the case of higher
 smoothable rank, where the
 choice of $R$ presents the main difficulty.

\begin{remark}
    While throughout we work over $\BC$, our constructions  (except for
    explicit computations regarding classification of tensors and their
    symmetries)  do not use
    anything about the base field, even the characteristic zero assumption.
    The only possible nontrivial applications of the complex numbers are in
    the cited sources, but we expect that  our main results, except for 
    Theorem~\ref{5isom}, are valid over most fields. \end{remark}

\subsection{Previous work on tensors of minimal border rank in $\BC^m\ot \BC^m\ot \BC^m$}\ 


When $m=2$ it is classical that all tensors in $\BC^2\ot \BC^2\ot \BC^2$ have border rank at most two.

For $m=3$  generators of the ideal of $\s_3(Seg(\pp 2\times\pp 2\times \pp 2))$  are given in \cite{LWsecseg}.

 For $m=4$ set theoretic equations for $\s_4(Seg(\pp 3\times\pp 3\times \pp 3))$
are given in \cite{MR2996364} and   lower degree set-theoretic equations are
given in \cite{MR2891138,MR2836258} where in the second
reference  they also give numerical evidence that these
equations generate the ideal.  
It is still an open problem to prove the known equations generate
the ideal. (This is the ``salmon prize problem'' posed by E. Allman in 2007. At the
time, not even set-theoretic equations were known).

Regarding  the   problem of classifying concise tensors
of minimal border rank:

For $m=3$   a complete classification of all tensors 
of border rank three is given in  \cite{MR3239293}.

 For $m=4$,  a classification of all $1_*$-generic concise tensors  
of border rank four  in $\BC^4\ot \BC^4\ot \BC^4$ is given in \cite{MR3682743}.

When $m=5$, a list of all abelian subspaces of $\tend(\BC^5)$ up to
isomorphism  is given in  \cite{MR2118458}.

The equivalence of~\ref{it:1stargprimOne} and~\ref{it:1stargprimFour} in the $m=5$ case of Theorem \ref{1stargprim}
 follows from the results of \cite{MR3682743}, but is not stated there. 
The  argument  proceeds by first using the classification in \cite{MR2202260}, \cite{MR2118458} of spaces of commuting
matrices in $\tend(\BC^5)$. There are $15$ isolated examples (up to isomorphism), 
and examples that potentially depend on parameters. (We write ``potentially'' as further
normalization is possible.) Then each case is tested and the tensors
passing the End-closed condition are proven to be of minimal border rank using
explicit border rank five expressions. We give a new proof of this result that
is significantly shorter, and self-contained. Instead of listing all possible
tensors, we analyze the possible Hilbert functions of the associated modules
in the Quot scheme living in the unique non-principal component.
 
    \subsection{Open questions and future directions}\label{sec:questions}

\subsubsection{111-abundant, not 111-sharp
tensors}\label{question:strictlyAbundant}
We do not know  any example of a  concise  tensor $T$ which is 111-abundant and
       is  not 111-sharp, that is, for which the inequality in~\eqref{eq:111} is
        strict. By Proposition \ref{1Ageneric111} such a tensor would have to be $1$-degenerate, 
        with $T(A^*), T(B^*),T(C^*)$ of bounded (matrix)
        rank at most $m-2$,  and by 
        Theorems \ref{5isom} and \ref{concise5}  it would have
        to occur in dimension greater than $5$.  Does there exist such an
        example?\footnote{After this paper was submitted, A. Conca pointed out an explicit example
of a 111-abundant, not 111-sharp tensor when $m=9$. We do not know if such
exist when $m=6,7,8$. The example is a generalization of
    Example~\ref{ex:symmetricTensor}.}

\subsubsection{111-abundant $1$-degenerate tensors} The 111-abundant tensors of bounded rank $m-1$
 have remarkable properties. What properties do 111-abundant tensors with
 $T(A^*)$, $T(B^*)$, $T(C^*)$ of
 bounded rank less than $m-1$ have?
 
\subsubsection{111-abundance v. classical equations}\label{111vclass} A remarkable feature of
Theorem~\ref{concise5} is that 111-equations are enough: there is no need for
more classical ones, like $p=1$ Koszul flattenings~\cite{MR3376667}. 
 In  fact, the $p=1$ Koszul flattenings, together with End-closed
 condition, are almost sufficient, but not quite: the $111$-equations are only
 needed to rule out one case, described in Example~\ref{ex:111necessary}.
   Other necessary closed conditions for minimal border rank are known, e.g.,
the higher Koszul flattenings of \cite{MR3376667}, the flag condition (see, e.g., \cite{MR3682743}), and the
equations of \cite{LMsecb}.
We plan to investigate the relations between these and the new conditions introduced in this paper.
As mentioned above, the 111-equations in general do not imply the $p=1$ Koszul flattening equations,
    see Example~\ref{ex:failureFor7x7}. 
    
\subsubsection{111-abundance in the symmetric case}
 Given a concise symmetric tensor $T\in S^3 \BC^m
\subseteq \BC^m\ot \BC^m\ot \BC^m$,   one classically
studies   its apolar algebra $\cA = \BC[ x_1, \ldots ,x_m]/\tann(T)$, where $x_1\hd x_m$
are coordinates on the dual space
$\BC^{m*}$ and $\tann(T)$ are the polynomials that give zero when  contracted  with $T$.
This  is a {\it Gorenstein} (see \S\ref{1gsubsect}) zero-dimensional
graded algebra with Hilbert function $(1, m,m,1)$ and each such algebra comes
from a symmetric tensor. A weaker version of 
Question~\ref{question:strictlyAbundant} is: does there exist such an algebra
with $\tann(T)$ having at least $m$ minimal cubic generators? There are plenty of  examples
with $m-1$ cubic generators, for example $T=\sum_{i=1}^m x_i^3$ or
the $1$-degenerate examples from the series~\cite[\S7]{MR4163534}.

\subsubsection{The locus of concise, 111-sharp tensors}  There is a
natural functor associated to  this locus, so we
have the  machinery of deformation theory and in
particular, it is a linear algebra calculation to determine  the tangent
space to this locus  at a given point  and, in special cases, even its
smoothness. This path will
be pursued further and it gives additional motivation for Question~\ref{question:strictlyAbundant}.

\subsubsection{111-algebra in the symmetric case} The 111-algebra is an
entirely unexpected invariant in the symmetric case as well. How is it
computed and how can it be used?  

 \subsubsection{The Segre-Veronese variety}
While in this paper we focused on $\BC^m\ot \BC^m\ot \BC^m$, the 111-algebra
can be defined for any tensor in $V_1\ot V_2 \ot V_3 \ot \ldots \ot V_q$ and
the argument from~\S\ref{111algpfsect}   generalizes to show that it
is still an algebra whenever $q\geq 3$. It seems  worthwhile to
investigate it in greater generality.

 \subsubsection{Strassen's laser method}  An important  motivation for this project was to find new tensors for Strassen's laser
 method for bounding the exponent of matrix multiplication. This method  has barriers to further progress when using the  Coppersmith-Winograd tensors that
 have so far given the best upper bounds on the exponent of matrix multiplication
 \cite{MR3388238}. Are any of the new  tensors we found in $\BC^5\ot \BC^5\ot \BC^5$ better for the
 laser method than the big Coppersmith-Winograd
 tensor $CW_3$? Are any $1$-degenerate minimal border rank tensors useful for
 the laser method?  (At this writing there are no known laser method  barriers for $1$-degenerate tensors.)

\subsection{Overview}

In \S\ref{1genreview} we review properties of binding and more generally  $1_A$-generic tensors
that satisfy the $A$-Strassen  equations. In particular we establish a dictionary between properties
of 
modules and such tensors. 
In \S\ref{111impliessect} we show $1_A$-generic 111-abundant tensors 
are exactly the $1_A$-generic tensors that satisfy the $A$-Strassen equations and are $A$-End-closed. We
  establish a normal form for   111-abundant tensors  with $T(A^*)$ corank one 
  that generalizes  Friedland's normal
for tensors 
with $T(A^*)$    corank one that  satisfy the  $A$-Strassen  equations. In \S\ref{111algpfsect} 
we prove 
Theorem \ref{ref:111algebra:thm} and illustrate it with several examples. 
In \S\ref{newobssect}  we discuss   111-algebras and their modules,  
and describe new obstructions for a tensor to be of minimal border rank
coming from its 111-algebra.
In \S\ref{noconcise} we show certain classes of tensors
are not concise to eliminate them from consideration in this paper. 
In \S\ref{m5sect} we prove Theorems \ref{concise5} and \ref{5isom}. 
In \S\ref{quotreview} we prove Theorem \ref{1stargprim} using 
properties of modules, their Hilbert functions and deformations. 
In \S\ref{minsmoothsect} we prove Theorem \ref{wildthm}.

\subsection{Definitions/Notation}\label{defs}

Throughout this paper we adopt the index ranges
\begin{align*}
&1\leq i,j,k\leq \aaa\\
&2\leq s,t,u\leq \aaa-1,\\
\end{align*}
and
 $A,B,C$   denote complex vector spaces
respectively    of dimension $\aaa, m,m$. Except for~\S\ref{1genreview} we will also have
$\aaa =m$.  The general linear group of changes of bases in $A$ is denoted $\GL(A)$ and
the subgroup of elements with determinant one by $\SL(A)$ and
their Lie algebras by $\fgl(A)$ and $\fsl(A)$.
The dual space to $A$ is denoted $A^*$.
  For $Z\subseteq A$, $Z^\perp:=\{\a\in A^*\mid 
\a(x)=0\forall x\in Z\}$ is its annihilator, and  $\langle Z\rangle\subseteq A$ denotes the span of $Z$.  
Projective space is  $\BP A= (A\backslash \{0\})/\BC^*$.
When $A$ is equipped with the additional structure of being a module over some ring,
we denote it $\ul A$ to emphasize its module structure.
 
Unital commutative algebras are usually denoted $\cA$ and polynomial algebras are
denoted $S$.

Vector space homomorphisms (including endomorphisms) between $m$-dimensional
vector spaces will be denoted
$K_i,X_i,X,Y,Z$, and we use the same letters to denote the corresponding matrices when
bases have been chosen.
Vector space homomorphisms (including endomorphisms) between $(m-1)$-dimensional
vector spaces, and the corresponding matrices,  will be denoted $\bx_i,\by,\bz$.

We   often write $T(A^*)$ as a space of $m\times m$ matrices (i.e.,  we choose bases).
When we do this, the columns index the $B^*$ basis and the rows the $C$
basis, so the matrices live in $\Hom(B^*, C)$. (This  
    convention disagrees with~\cite{MR3682743} where the roles of $B$ and $C$
    were
reversed.)
 
 For $X\in \thom(A,B)$, the symbol $X^\bt$ denotes the induced element of $\thom(B^*,A^*)$, which
 in bases is just the transpose of the matrix of $X$. 

  The \emph{$A$-Strassen  equations} were defined in \cite{Strassen505}.  The $B$ and $C$ Strassen equations are
     defined analogously. Together, we call them \emph{Strassen's equations}.
     Similarly, the \emph{$A$-End-closed equations} are implicitly defined in \cite{MR0132079}, we state them
 explicitly in~\eqref{bigenda1gen}. Together with their $B$ and $C$
 counterparts they are the End-closed equations. We never work with these
 equations directly (except proving Proposition~\ref{111iStr+End}),
 we only consider the conditions they impose on $1_*$-generic tensors. 

 For a tensor $T\in \BC^m\otimes \BC^m\otimes \BC^m$, we say that $T(A^*)\subseteq
     B\ot C$ is of \emph{bounded (matrix) rank} $r$ if all matrices in $T(A^*)$ have
 rank at most $r$, and we drop reference to ``matrix'' when the meaning is clear. If rank $r$ is indeed attained, we also say that $T(A^*)$
 is of \emph{corank} $m-r$.

\subsection{Acknowledgements} We thank M. Micha{\l}ek for numerous useful
discussions, in particular leading to Proposition~\ref{Gorgood},   M. Micha{\l}ek  and A. Conner for help with writing down
explicit border
rank decompositions, and J. Buczy{\'n}ski for many suggestions to improve an earlier draft. Macaulay2 and its {\it VersalDeformation}
package~\cite{MR2947667} was used in   computations. We thank the
anonymous referee for helpful comments. We are very grateful to Fulvio
Gesmundo for pointing out a typo in the statement of Theorem~\ref{wildthm} in
the previous version.

\section{Dictionaries for  $1_*$-generic, binding, and $1$-generic tensors
satisfying Strassen's equations for minimal border rank}\label{1genreview}

\subsection{Strassen's equations and the End-closed equations for $1_*$-generic tensors}\label{strandend}
A $1_*$-generic tensor satisfying Strassen's equations may be reinterpreted in
terms of classical objects in matrix theory and then in commutative algebra,
which allows one to apply existing results in these areas to their study.

Fix a tensor $T\in A\ot B\ot C=\BC^\aaa\ot \BC^m\ot \BC^m$  which is
$A$-concise and $1_A$-generic with $\alpha\in A^*$ such that
$T(\alpha):  B^*\to C $ has full rank. The $1_A$-genericity implies  that $T$ is
$B$ and $C$-concise.

 \def\Espace{\cE_{\alpha}(T)}
Consider
\[
    \Espace := T(A^*)T(\a)\inv \subseteq \tend(C).
\]
This space is $T'(A^*)$ where $T'\in A\ot C^*\ot C$ is a tensor obtained from
$T$ using the isomorphism $\Id_A\ot (T(\a)\inv)^{ \bt }\ot \Id_C$. It follows that
$T$ is of rank $m$ if and only if the space $\Espace$ is  simultaneously
diagonalizable and that $T$ is of border rank $m$ if and only if $\Espace$
is a limit of spaces of simultaneously diagonalizable
endomorphisms~\cite[Proposition~2.8]{MR3682743} also see~\cite{LMsecb}.
Note that $\Id_C = T(\a)T(\a)\inv \in \Espace$.

 A necessary condition for a subspace  $\tilde E\subseteq \tend(C)$ to be  a limit of simultaneously diagonalizable spaces of endomorphisms
 is that  the elements of  $\tilde E$ pairwise commute. The $A$-Strassen  equations  \cite[(1.1)]{MR2996364} in the
 $1_A$-generic case are   the translation of this condition to the language
 of tensors, see, e.g., \cite[\S2.1]{MR3682743}.
For the rest of this section, we additionally assume that $T$ satisfies the $A$-Strassen equations, i.e., that $\cE_\a(T)$ is abelian.

 Another necessary condition on a space  to be 
 a limit of simultaneously diagonalizable spaces has been known since 1962 \cite{MR0132079}:
 the space must be closed under composition of endomorphisms.
  The corresponding equations on the tensor are the $A$-End-closed
 equations. 
  
   \subsection{Reinterpretation as modules}\label{dictsectOne}
   In this subsection we introduce the language of modules and the ADHM correspondence.
This extra structure will have several advantages: it provides more invariants for tensors, it enables
us to apply theorems in the commutative algebra literature to the study of tensors, and perhaps most importantly, it will enable
us to utilize deformation theory.

    Let $\tilde E\subseteq \tend(C)$ be a space of endomorphisms that contains $\Id_C$ and consists
    of pairwise commuting endomorphisms. Fix a decomposition $\tilde E = \langle\Id_C\rangle
    \oplus E$. A canonical such decomposition is obtained by requiring
    that  the elements of  $E$ are traceless.  To eliminate ambiguity, we will use this
    decomposition, although in the proofs we never make use of the fact that $E\subseteq\fsl(C)$.
       Let $S = \Sym E$ be a polynomial ring in $\dim E = \aaa - 1$
    variables. By the ADHM correspondence \cite{MR598562}, as utilized 
    in~\cite[\S3.2]{jelisiejew2021components} we define the
    \emph{module associated to $E$} to be the $S$-module $\ul{C}$ which is the
    vector space $C$ with
    action of $S$ defined as follows: let $e_1\hd e_{\aaa-1}$ be a basis of $E$, 
    write $S=\BC[y_1\hd y_{\aaa-1}]$,    define $y_j(c):=e_j(c)$, and extend to an action of the
    polynomial ring.

    It follows from~\cite[\S3.4]{jelisiejew2021components} that $\tilde E$ is a limit
    of simultaneously diagonalizable spaces if and only if $\ul{C}$ is a limit
    of \emph{semisimple modules}, which, by definition,  are $S$-modules of the form $N_1\oplus
    N_2 \oplus  \ldots \oplus N_{ m }$ where $\dim N_{ h } = 1$ for
    every $ h $. The
    limit is taken in the {\it Quot scheme}, see~\cite[\S3.2 and
    Appendix]{jelisiejew2021components} for an  introduction,
   and~\cite[\S5]{MR2222646}, \cite[\S9]{MR1481486} for classical sources.
    The Quot scheme will not be used until \S\ref{twonew}.

    Now we give a more explicit description of the construction in the
        situation relevant for this paper.
        Let $A$, $B$, $C$ be $\BC$-vector spaces, with $\dim A = \aaa$, $\dim
    B = \dim C = m$, as above. Let $T\in A\ot B\ot C$ be a concise $1_A$-generic tensor
    that satisfies Strassen's equations (see~\S\ref{strandend}). To such a $T$
    we associated the space $\Espace\subseteq \tend(C)$. The
    \emph{module associated to $T$} is the module $\ul{C}$
    associated to the space $\tilde{E} := \Espace$ using the procedure above.
    The procedure involves a choice of $\alpha$ and a basis of $E$, so the module associated to $T$ is only
    defined up to isomorphism.

    \begin{example}\label{ex:modulesForMinRank}
        Consider a concise tensor $T\in \BC^m\ot \BC^m\ot \BC^m$  of minimal rank, say $T = \sum_{i=1}^m a_i\ot b_i\ot
        c_i$ with $\{ a_i\}$, $\{ b_i\}$, $\{ c_i\} $ bases of $A,B,C$ and $\{\a_i\}$ the dual basis of $A^*$ etc.. Set
        $\alpha = \sum_{i=1}^m \a_i$. Then $\Espace$ is the space of
        diagonal matrices, so $E = \langle E_{ii} - E_{11}\ |\ i=2,3, \ldots
        ,m \rangle$ where $E_{ij}=\g_i\ot c_j$. The module $\ul{C}$
        decomposes as an $S$-module into $\bigoplus_{i=1}^m \BC c_i$ and thus is
        semisimple. Every semisimple module is a limit of such.
    \end{example}

    If a module $\ul{C}$ is associated to a space $\tilde{E}$, then the
    space $\tilde{E}$ may be recovered from $\ul{C}$ as the set of the
    linear endomorphisms corresponding to the actions of elements of $S_{\leq
    1}$ on $\ul{C}$. If $\ul{C}$ is
associated to a tensor $T$, then the
tensor  $T$ is recovered from $\ul{C}$ up to isomorphism as the tensor of the
    bilinear map $S_{\leq 1}\ot \ul C\to \ul C$ coming from the action on the module.

    \begin{remark}
    The restriction to $S_{\leq 1}$ may seem unnatural, but observe that if $\tilde E$
    is additionally End-closed then for every $s\in S$ there exists an element
    $s'\in S_{\leq 1}$ such that the actions of $s$ and $s'$ on $\ul{C}$ coincide.
    \end{remark}
    
 Additional conditions on a tensor transform  to natural
    conditions on the associated module. We explain two such additional conditions in
    the next two subsections.  

\subsection{Binding tensors and the Hilbert scheme} \label{dictsect}

    \begin{proposition}\label{ref:moduleVsAlgebra}
        Let $T\in \BC^m\ot \BC^m\ot \BC^m=A\ot B\ot C$ be concise,   $1_A$-generic, and satisfy  the $A$-Strassen  equations.
        Let $\ul{C}$ be the $S$-module obtained from   $T$ as above. The following
        conditions are equivalent
        \begin{enumerate}
            \item\label{it:One} the tensor $T$ is $1_B$-generic (so it is binding),
            \item\label{it:Two} there exists an element $c\in \ul C$ such that $S_{\leq 1}c = \ul C$,
            \item\label{it:Three} the $S$-module $\ul{C}$ is isomorphic to
                $S/I$ for some ideal $I$ and the space $\Espace$ is
                End-closed, 
            \item\label{it:ThreePrim} the $S$-module $\ul{C}$ is isomorphic to
                $S/I$ for some ideal $I$,
            \item\label{it:Alg} the tensor $T$ is isomorphic to a
                multiplication tensor in a commutative unital rank $m$ algebra
                $ \cA $. 
        \end{enumerate}
    \end{proposition}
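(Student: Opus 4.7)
Write $\tilde E := \cE_\alpha(T) = T(A^*)T(\alpha)^{-1} \subseteq \End(C)$ and let $\cB \subseteq \End(C)$ be the commutative unital subalgebra generated by $\tilde E$. The $A$-Strassen hypothesis makes the $S$-action on $\ul C$ factor through a surjection $S \twoheadrightarrow \cB$, and End-closedness of $\tilde E$ is precisely the condition $\tilde E = \cB$. The plan is to prove \ref{it:One} $\Leftrightarrow$ \ref{it:Two}, \ref{it:Two} $\Leftrightarrow$ \ref{it:Three}, \ref{it:Two} $\Leftrightarrow$ \ref{it:ThreePrim}, and \ref{it:Three} $\Leftrightarrow$ \ref{it:Alg}, with all heavy lifting concentrated in one short commutativity-plus-cyclicity argument.

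For \ref{it:One} $\Leftrightarrow$ \ref{it:Two}, given $\beta \in B^*$, rewrite $T(\beta)\colon A^* \to C$ as the composition of the isomorphism $A^* \cong \tilde E$, $\alpha' \mapsto T(\alpha')T(\alpha)^{-1}$, with the evaluation $\tilde E \to C$, $X \mapsto X\cdot c$, at $c := T(\alpha)(\beta)$. Then $T(\beta)$ is surjective iff this evaluation is surjective iff $S_{\leq 1}\cdot c = \ul C$, and as $\beta$ ranges over $B^*$, $c$ ranges over all of $C$.

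For \ref{it:Two} $\Rightarrow$ \ref{it:Three}, \ref{it:ThreePrim}, assume $S_{\leq 1}\cdot c = \ul C$. The evaluation $\cB \to \ul C$, $X \mapsto X\cdot c$, is surjective (since $\cB \supseteq \tilde E$) and is also injective: if $X \in \cB$ satisfies $X\cdot c = 0$, then for any $Y \in \cB$ commutativity gives $X\cdot(Y\cdot c) = Y\cdot(X\cdot c) = 0$, so $X$ kills $\cB\cdot c = \ul C$ and hence vanishes in $\End(C)$. Thus $\dim\cB = m$. The surjectivity of the $\tilde E$-evaluation forces $\dim\tilde E \geq m$, and trivially $\dim\tilde E \leq \aaa = m$, so $\dim\tilde E = m = \dim\cB$; combined with $\tilde E \subseteq \cB$ this forces $\tilde E = \cB$ (End-closed), $T$ is $A$-concise, and $\ul C \cong \cB \cong S/I$ with $I = \ker(S \to \cB)$. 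For the converses: in \ref{it:Three}, take $c$ the image of $1 \in S/I$; End-closedness says $S$ acts through $S_{\leq 1}$, so $S_{\leq 1}\cdot c = S\cdot c = \ul C$. In \ref{it:ThreePrim}, again take $c$ the image of $1$; the $\cB$-evaluation at $c$ is surjective, hence injective by the commutativity argument above, giving $\dim\cB = m$, which combined with $A$-conciseness $\dim\tilde E = m$ forces $\tilde E = \cB$, reducing to \ref{it:Three}.

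For \ref{it:Three} $\Leftrightarrow$ \ref{it:Alg}, given \ref{it:Three}, transport the ring structure on $S/I$ along $\ul C \cong S/I$ to obtain a commutative unital $m$-dimensional algebra $\cA$ on $C$; with the identifications $A^* \cong \tilde E \cong \cA$ via $\alpha' \mapsto T(\alpha')T(\alpha)^{-1}$ and $B^* \cong \cA$ via $T(\alpha)$, the tensor $T$ becomes the multiplication of $\cA$. Conversely, the multiplication tensor of a commutative unital algebra $\cA$ manifestly has $\tilde E \cong \cA$ End-closed and $\ul C \cong \cA \cong S/I$, giving \ref{it:Three}. The main obstacle is the careful bookkeeping of the identifications $A^* \leftrightarrow \tilde E \leftrightarrow \cA$, $B^* \leftrightarrow C \leftrightarrow \cA$, and $S \twoheadrightarrow \cB$, so that in \ref{it:Alg} one obtains an \emph{isomorphism of tensors} rather than just an algebra of the right dimension. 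The essential mathematical input everywhere is the one-line observation that a surjective evaluation of a commutative subalgebra of $\End(C)$ at a single vector of $C$ is automatically injective.
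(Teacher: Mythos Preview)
Your proof is correct and follows essentially the same route as the paper's. The paper proves the cycle $(1)\Rightarrow(2)\Rightarrow(3),(4)\Rightarrow(5)\Rightarrow(1)$ by first establishing the direct sum $S = S_{\leq 1}\oplus I$ from~(2) and then reading off End-closedness by writing $\ul X\,\ul Y = U + \ul Z$ with $U\in I$; you instead package the same dimension count as the single observation that a commutative subalgebra $\cB\subseteq\End(C)$ evaluating surjectively at a vector must evaluate injectively, hence $\dim\cB = m = \dim\tilde E$ forces $\tilde E=\cB$. Your $(3)\Leftrightarrow(5)$ via direct transport of structure is equivalent to the paper's commutative-diagram argument. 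One cosmetic point: in your $(1)\Leftrightarrow(2)$ paragraph you call $A^*\to\tilde E$ an ``isomorphism'' before $A$-conciseness is established; the argument only needs it to be a surjection (which it always is), and bijectivity then follows once either condition is in hand, as you implicitly use later.
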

    
    The algebra  $\cA$  in \ref{it:Alg} will be  obtained from the module $\ul C$ as described in the proof.

    The equivalence of~\ref{it:One} and~\ref{it:Alg} for minimal border rank
    tensors was first obtained by Bl\"aser and Lysikov
    \cite{MR3578455}.
    \begin{proof}
               Suppose~\ref{it:One} holds. Recall that $\Espace = T'(A^*)$ where
        $T'\in A\ot C^*\ot C$ is obtained from $T\in A\ot B\ot C$ by means of
        $(T(\alpha)\inv)^{ \bt } \colon B\to C^*$. Hence $T'$ is $1_{C^*}$-generic, so there
        exists an element $c\in (C^*)^* \simeq C$ such that the induced map
        $A^*\to C$ is bijective. But this map is exactly the multiplication  map by
        $c$,  $S_{\leq1}\to \ul C$, so~\ref{it:Two} follows.

        Let $\varphi\colon S\to \ul C$ be defined by
        $\varphi(s) = sc$ and let $I = \ker \varphi$. (Note that $\varphi$ depends on our choice of $c$.) Suppose~\ref{it:Two}
        holds; this means that $\varphi|_{S_{\leq 1}}$  is surjective.
        Since $\dim S_{\leq 1} = m = \dim C$,  this  surjectivity implies that we have a vector space direct sum  $S
        = S_{\leq 1} \oplus I$. Now   $X\in \Espace\subseteq \tend(C)$ acts on $C$ in
        the same way as the corresponding linear polynomial $\ul X\in S_{\leq 1}$.
        Thus a product $XY\in\End(C)$ acts as the product of polynomials $\ul X\ul Y\in S_{\leq 2}$. Since $S =
        I\oplus S_{\leq 1}$ we may write $\ul X\ul Y = U + \ul Z$, where $U\in I$ and
        $\ul Z\in S_{\leq 1}$. The actions of $XY,Z\in \End(C)$ on $C$ are identical,
        so $XY = Z$. This proves~\ref{it:Three}.
        Property~\ref{it:Three} implies~\ref{it:ThreePrim}.

        Suppose that~\ref{it:ThreePrim} holds
         and take an $S$-module  isomorphism $\varphi'\colon \ul{C}\to S/I$. 
        Reversing the argument above, we
        obtain again $S = I\oplus S_{\leq 1}$.
        Let $ \cA  := S/I$.  This is a finite algebra of
        rank $\tdim  S_{\leq 1} = m$.
        The easy, but key observation is that the multiplication in $ \cA $ is
        induced by the multiplication $S\ot  \cA \to  \cA $ on the $S$-module $ \cA $.
     The
        multiplication maps arising from the $S$-module structure give the
        following commutative diagram:
        \[
            \begin{tikzcd}
                S_{\leq 1}\ar[d, hook]\ar[dd, "\psi"', bend right=40] &[-2.5em] \ot &[-2.5em] \ul{C}\ar[d,equal]\ar[r]  &
                \ul{C}\ar[d,equal]\\
                S\ar[d,two heads] & \ot & \ul{C}\ar[d,equal]\ar[r]  & \ul{C}\ar[d,equal]\\
                S/I\ar[d,equal] & \ot & \ul{C}\ar[d, "\varphi'"]\ar[r]  &
                \ul{C}\ar[d,"\varphi'"]\\
                S/I & \ot & S/I \ar[r] & S/I
            \end{tikzcd}
        \]
        The direct sum decomposition implies  the map $\psi$ is a
        bijection. Hence the tensor $T$, which is isomorphic to the
        multiplication map from the first row, is also isomorphic to the
        multiplication map in the last row. This proves~\ref{it:Alg}. Finally,
        if~\ref{it:Alg} holds, then $T$ is $1_B$-generic, because the
        multiplication by $1\in \cA$ from the right is bijective.
    \end{proof}
    The structure tensor  of a module first appeared in
    Wojtala~\cite{DBLP:journals/corr/abs-2110-01684}.
    The statement that binding tensors satisfying Strassen's
     equations satisfy End-closed conditions was originally  proven jointly with M. Micha{\l}ek.
         A binding tensor is of minimal border rank if and only if $\ul{C}$ is a
    limit of semisimple modules if and only if $S/I$ is a \emph{smoothable}
    algebra. For $m\leq 7$ all algebras are
    smoothable~\cite{MR2579394}.

\subsection{$1$-generic tensors}\label{1gsubsect}  A $1$-generic tensor
satisfying the  $A$-Strassen   equations is   isomorphic
    to a symmetric tensor  by~\cite{MR3682743}. (See \cite{GO60survey} for a short proof.). For a commutative unital
    algebra $\cA$, the multiplication tensor of $\cA$ is $1$-generic if and only
    if $\cA$ is \emph{Gorenstein},   see~\cite[Prop. 5.6.2.1]{MR3729273}.
   By definition, an algebra $\cA$ is Gorenstein if $\cA^*=\cA \phi$ for some $\phi\in \cA^*$, or in tensor language, if
    its structure tensor $T_{\cA}$ is $1$-generic with $T_{\cA}(\phi)\in \cA^*\ot \cA^*$ of full rank.
     For
    $m\leq 13$ all Gorenstein algebras are smoothable~\cite{MR3404648},
    proving Proposition~\ref{Gorgood}.

    \subsection{Summary}\label{summarysect} We obtain the following dictionary for tensors
    in $\BC^\aaa\ot \BC^m\ot \BC^m$ with $\aaa\leq m$:

    \begin{tabular}[h]{c c c}
          tensor satisfying $A$-Strassen  eqns. & is isomorphic to &multiplication tensor in \\
        \toprule
        $1_A$-generic && module\\
        $1_A$- and $1_B$-generic (hence binding  and $\aaa=m$) && unital commutative algebra\\
        $1$-generic  ($\aaa=m$) && Gorenstein algebra
    \end{tabular}

 \section{Implications of  111-abundance}\label{111impliessect}

For the rest of this article, we restrict to tensors $T\in A\ot B\ot C=\BC^m\ot \BC^m\ot \BC^m$.
   Recall the notation $X\acta T$ from \S\ref{111intro} and that $\{ a_i\}$ is a basis of $A$.
   In what follows we allow $\tilde{a}_h$ to be arbitrary elements of $A$.

    \begin{lemma}\label{111intermsOfMatrices}
        Let $T = \sum_{h=1}^r  \tilde{a}_h\ot K_h$, where
        $ \tilde{a}_h\in A$
        and $K_h\in B\ot C$ are viewed as maps $K_h\colon B^*\to C$. Let $\Amat\in \tend(A)$, $Y\in \tend(B)$ and $Z\in
        \tend(C)$. Then
        \begin{align*}
            \Amat\acta T &= \sum_{h=1}^{r} \Amat( \tilde{a}_h) \ot K_h,\\
            \Bmat\actb T &= \sum_{h=1}^r  \tilde{a}_h\ot (K_h\Bmat^{\bt}),\\
            \Cmat\actc T &= \sum_{h=1}^r   \tilde{a}_h\ot (\Cmat K_h).
        \end{align*}
        If $T$ is concise   and $\Omega$ is an element of the triple
        intersection \eqref{111sp}, then the triple $(\Amat, \Bmat, \Cmat)$ such that
        $\Omega  =\Amat \acta T = \Bmat\actb T = \Cmat \actc T$ is uniquely
        determined. In this case we call $\Amat$, $\Bmat$, $\Cmat$ \emph{the
        matrices corresponding to $\Omega$}.
    \end{lemma}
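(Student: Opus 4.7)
The three displayed formulas will be verified by unwinding the definition $X\acta T = (X\otimes\Id_B\otimes\Id_C)(T)$, and similarly for $\actb,\actc$. First I would write $K_h = \sum_{j}b_j\ot c_j^{(h)}$ in coordinates and just apply the relevant endomorphism slot by slot. For $\Amat\acta T$ this gives $\sum_h \Amat(\tilde a_h)\ot K_h$ with no surprises. For $\Cmat\actc T$, applying $\Cmat$ to the $C$-factor of each $K_h$ corresponds, under the identification $B\ot C\cong\thom(B^*,C)$, to left-composition with $\Cmat$, giving $\Cmat K_h$. For $\Bmat\actb T$ the only point requiring attention is the transpose: applying $\Bmat$ to the $B$-factor of $K_h = \sum_{j}b_j\ot c_j^{(h)}$ yields $\sum_{p,j}\bb{}{j}\bb{}{p}\!\!\phantom{.}$ ... more cleanly, evaluating the resulting element of $\thom(B^*,C)$ on $\beta\in B^*$ returns $\sum_j \beta(\Bmat b_j)\,c_j^{(h)} = K_h(\Bmat^{\bt}\beta)$, which is exactly $K_h\Bmat^{\bt}$. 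This is the one spot where the convention that $K_h$ is read as $B^*\to C$ has to be handled with care, but no difficulty beyond bookkeeping.

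For the uniqueness assertion, suppose $\Amat\acta T=\Amat'\acta T$ for two endomorphisms of $A$. Setting $\Amat'':=\Amat-\Amat'$, I need $\Amat''\acta T=0 \implies \Amat''=0$. Write $\Amat''=\sum_p \alpha_p\ot a_p$ with $\{a_p\}$ a basis of $A$, so that $\Amat''\acta T = \sum_p a_p\ot T(\alpha_p)\in A\ot(B\ot C)$. Linear independence of $\{a_p\}$ forces $T(\alpha_p)=0$ for every $p$; by $A$-conciseness the map $T_A\colon A^*\to B\ot C$ is injective, hence every $\alpha_p=0$ and $\Amat''=0$. The arguments for $\Bmat$ and $\Cmat$ are identical, using $B$- and $C$-conciseness respectively (and the three formulas already established, to reduce the vanishing of $\Bmat\actb T$ or $\Cmat\actc T$ to the analogous linear independence argument).

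The ``hard part'' is really just the sign/transpose convention in the second formula; everything else is immediate from the definitions once the identifications $B\ot C\cong\thom(B^*,C)$ and $A\ot B\ot C\cong A\ot\thom(B^*,C)$ are fixed.
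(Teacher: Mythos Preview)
Your proof is correct and essentially matches the paper's. The paper leaves the three displayed formulas to the reader and argues uniqueness for $\Amat$ by writing $T=\sum_i a_i\ot K_i$ with $\{a_i\}$ a basis of $A$, noting that conciseness makes the $K_i$ linearly independent, and concluding from $0=\sum_i \Amat''(a_i)\ot K_i$ that $\Amat''(a_i)=0$ for all $i$. Your uniqueness argument is the dual organization of the same idea: you expand $\Amat''$ rather than $T$ in a basis of $A$ and invoke injectivity of $T_A$ directly, which is equally valid and arguably cleaner since it uses conciseness in its defining form.
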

    \begin{proof}
    The first assertion is left to the reader. For the second, it suffices to prove it for 
         $\Amat$. Write $T
         = \sum_{i=1}^m a_i\ot K_i$. The $K_i$ are linearly independent by conciseness. Suppose
        $\Amat, \Amat'\in \tend(A)$ are such that $\Amat\acta T =
        \Amat'\acta T$. Then for $\Amat'' = \Amat - \Amat'$ we have $0 =
        \Amat''\acta T = \sum_{i=1}^m \Amat''(a_i) \ot K_i$. By linear
        independence of $K_i$, we have $\Amat''(a_i) = 0$ for every $i$. This means that
        $\Amat''\in\tend(A)$ is zero on a basis of $A$, hence $\Amat'' = 0$.
    \end{proof}
    
    \subsection{$1_A$-generic case}

    \begin{proposition}\label{1Ageneric111}
        Suppose that $T\in \BC^m\ot \BC^m\ot \BC^m=A\ot B\ot C$ is $1_A$-generic with $\alpha\in A^*$ such that
        $T(\alpha)\in B\ot C$ has full rank. Then $T$ is 111-abundant if and only
        if the space $\Espace = T(A^*)T(\alpha)\inv\subseteq \tend(C)$ is
        $m$-dimensional, abelian, and End-closed. Moreover if these hold, then
        $T$ is concise and 111-sharp.
    \end{proposition}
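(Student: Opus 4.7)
The plan is to translate 111-abundance into a condition on the space $\Espace\subseteq \End(C)$, then compare dimensions on both sides.

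\textbf{Setup.} Using $1_A$-genericity, identify $B^*$ with $C$ via the isomorphism $T(\alpha)\colon B^*\to C$. Under this identification, $T$ becomes an element of $A\otimes C^*\otimes C$ and may be written as $T=\sum_{i=1}^m a_i\otimes X_i$ with $X_i\in \End(C)$ and $X_1 = \Id_C$. Then $\Espace = \langle X_1,\ldots,X_m\rangle$ and $\dim\Espace = \dim T(A^*)\leq m$, with equality if and only if $T$ is $A$-concise. Note that $1_A$-genericity already makes $T$ both $B$- and $C$-concise.

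\textbf{Parameterization of the triple intersection.} By Lemma~\ref{111intermsOfMatrices}, elements $\Omega$ of the triple intersection arise from triples $(X,Y,Z)\in \End(A)\times \End(B)\times \End(C)$ with $X\acta T=Y\actb T=Z\actc T$. Expanding using the formulas of the Lemma and matching coefficients of the basis $\{a_i\}$ gives:
\begin{itemize}
\item $Y\actb T=Z\actc T$ reads $X_i Y^\bt = Z X_i$ for every $i$. Setting $i=1$ (so $X_1=\Id$) yields $Y^\bt=Z$; the remaining relations then become $X_iZ = ZX_i$, i.e., $Z$ commutes with $\Espace$.
\item $X\acta T=Z\actc T$ reads $Z X_j = \sum_i X_{ji}X_i$ for every $j$. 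Setting $j=1$ gives $Z\in \Espace$; the remaining relations say $Z X_j \in \Espace$, i.e., $Z\Espace\subseteq\Espace$.
\end{itemize}
Moreover the map $\Omega\mapsto Z$ is injective: since $\Omega = \sum_i a_i\otimes ZX_i$, the coefficient of $a_1$ recovers $ZX_1=Z$ (no conciseness of $T$ in $A$ is needed here). Conversely, any $Z$ in
\[
    \Espace' := \{Z\in \Espace \mid [Z,\Espace]=0 \text{ and } Z\Espace\subseteq\Espace\}
\]
produces an element of the triple intersection by setting $Y = Z^\bt$ and extracting $X\in\End(A)$ from the relations $ZX_j = \sum_i X_{ji}X_i$ (these coefficients exist by $Z\Espace\subseteq\Espace$). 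Thus $\dim(\text{triple intersection}) = \dim \Espace'$.

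\textbf{Conclusion.} From $\Espace'\subseteq \Espace$ and $\dim\Espace\leq m$ one has $\dim\Espace'\leq m$. If $T$ is 111-abundant, $\dim\Espace'\geq m$, forcing $\dim\Espace = m$ (so $T$ is $A$-concise, hence concise) and $\Espace' = \Espace$; the latter precisely says that every $Z\in\Espace$ commutes with $\Espace$ and satisfies $Z\Espace\subseteq\Espace$, i.e., $\Espace$ is abelian and End-closed. For the converse, if $\Espace$ is $m$-dimensional, abelian, and End-closed, then every $Z\in\Espace$ lies in $\Espace'$, so $\dim\Espace' = m$ and $T$ is 111-sharp. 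The main obstacle is clean bookkeeping with the identification $B^*\cong C$ and the transpose $Y^\bt$; once the setup is in place, the linear-algebra verification is straightforward.
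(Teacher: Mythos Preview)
Your proof is correct and follows essentially the same approach as the paper's: both reduce via the identification $B^*\cong C$ to analyzing which $Z\in\End(C)$ arise from triples, derive $Y^\bt=Z$ from the $i=1$ equation, and then read off the commutativity and End-closed conditions. Your explicit definition of $\Espace'$ packages the argument slightly more cleanly than the paper's running prose, but the substance is identical.
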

    \begin{proof}
           Assume $T$ is $111$-abundant.
           The map $ (T(\alpha)^{-1})^{\bt}\colon B\to C^* $ induces  an isomorphism of $T$ with a tensor
        $T'\in A\ot C^*\ot C$, so we may assume that $T = T'$,
        $T(\alpha) = \Id_C$ and $B=C^*$. We   explicitly describe the tensors
        $\Omega$ in the triple intersection. We use
        Lemma~\ref{111intermsOfMatrices} repeatedly.
        Fix a basis $a_1, \ldots ,a_m$ of $A$ and write $T = \sum_{i=1}^m
        a_i\ot K_i$ where $K_0 = \Id_C$, but we do not assume the $K_i$ are linearly independent, i.e., that
        $T$ is $A$-concise. Let $\Omega = \sum_{i=1}^m a_i\ot
        \omega_i\in A\ot B\ot C$. Suppose $\Omega = \Bmat^{\bt}\actb T = \Cmat \actc T$ for
        some $\Bmat\in \tend(C)$ and $\Cmat\in \tend(C)$.

        The condition $\Omega = \Bmat^{\bt} \actb T$ means that $\omega_i = K_i\Bmat$ for
        every $i$. The condition $\Omega =
        \Cmat \actc T$ means that $\omega_i = \Cmat K_i$.  For $i=1$ we obtain
        $\Bmat = \Id_C \cdot \Bmat = \omega_1 = \Cmat \cdot \Id_C = \Cmat$, so $\Bmat =
        \Cmat$. For other $i$ we obtain $\Cmat K_i = K_i \Cmat$, which means
        that $\Cmat$ is in the joint commutator of $T(A^*)$.

        A matrix $\Amat$ such that $\Omega =
        \Amat \acta T$ exists if and only if $\omega_i\in \langle K_1, \ldots
        ,K_m\rangle = T(A^*)$ for every $i$. This yields $\Cmat K_i = K_i\Cmat\in T(A^*)$
        and in particular $\Cmat = \Cmat\cdot  \Id_C\in T(A^*)$.

        By assumption, we have a space of choices for $\Omega$ of dimension at least $m$. Every
        $\Omega$ is determined uniquely by an element $\Cmat\in T(A^*)$. Since
        $\dim T(A^*) \leq m$, we conclude that $\dim T(A^*) = m$, i.e., $T$ is $A$-concise (and thus concise),  and   for
        every $\Cmat\in T(A^*)$,  the element $\Omega = \Cmat \actc T$ lies
        in the triple intersection. Thus for
        every $\Cmat\in T(A^*)$ we have $\Cmat K_i = K_i \Cmat$, which shows
        that   $T(A^*)\subseteq \tend(C)$ is abelian and $\Cmat K_i\in T(A^*)$,
        which implies that $\Espace$ is End-closed.  Moreover, the triple
            intersection is of dimension $\dim T(A^*) = m$, so
        $T$ is 111-sharp. 

        Conversely, if $\Espace$ is $m$-dimensional, abelian and End-closed,   then reversing the above
        argument, we see that $\Cmat\actc T$ is in the triple intersection for
        every $\Cmat\in T(A^*)$. Since $(\Cmat \actc T)(\alpha) = \Cmat$, the
        map from $T(A^*)$ to the triple intersection is injective, so
        that $T$ is 111-abundant and the above argument applies to it, proving
        111-sharpness and conciseness.
    \end{proof}

    \subsection{Corank one $1_A$-degenerate case: statement of the normal form}
We next consider the  $1_A$-degenerate tensors which are as ``nondegenerate''
    as possible: there exists $\a\in A^*$ with $\trank(T(\alpha))=m-1$.

    \begin{proposition}[characterization of corank one concise   tensors that
        are 111-abundant]\label{1Aonedegenerate111}
        Let $T = \sum_{i=1}^m a_i \ot K_i$ be a concise tensor  which  
        is 111-abundant and not
        $1_A$-generic.
        Suppose that $K_1\colon B^*\to C$ has rank
        $m-1$.  Choose   decompositions $B^* = {B^*}'\oplus \tker(K_1)=:   {B^*}'\oplus  \langle
        \b_m\rangle $  and $C = \tim(K_1)\op \langle c_m\rangle =:
        C'\oplus \langle c_m\rangle $   and
        use $K_1$ to  identify ${B^*}'$ with $C'$. Then there exist   bases of $A,B,C$ such that
        \be\label{thematrices}
            K_1 = \begin{pmatrix}
                 \Id_{C'}  & 0\\
                0 & 0
            \end{pmatrix}, \qquad K_s = \begin{pmatrix}
                \bx_s & 0\\
                0 & 0
            \end{pmatrix} \quad \mbox{for}\ \ 2\leq s\leq m-1, \quad\mbox{and}\quad K_m =
            \begin{pmatrix}
                \bx_{m} & w_m\\
                u_m & 0
            \end{pmatrix} ,
        \ene
        for some $\bx_2, \ldots ,\bx_m\in \tend(C')$ and $0\neq u_m\in
        B'\ot c_m\isom {C'}^* $, $0\neq w_m\in  \b_m\ot
        C'\isom C' $ where, setting $\bx_1 := \Id_{C'}$,
        \begin{enumerate}
            \item\label{uptohereFriedland} $u_mx^jw_m = 0$ for every $j\geq 0$ and $x\in \langle \bx_1, \ldots
                ,\bx_m\rangle$, so in particular $u_mw_m = 0$.
            \item\label{item2} the space $\langle \bx_{1},\bx_{2}, \ldots
                ,\bx_{m-1}\rangle\subseteq \tEnd( C' )$ is
                $(m-1)$-dimensional, abelian,  and End-closed.
            \item \label{item3} the space $\langle \bx_2, \ldots
                ,\bx_{m-1}\rangle$ contains the rank one matrix $w_mu_m$.

            \item\label{item3b}For all $2\leq s\leq m-1$, 
                $u_m\bx_s = 0$ and $\bx_s w_m = 0$. 

            \item \label{item4} For every $s$,  there exist vectors $u_s\in
                 {C'}^* $ and
                $w_s\in  C'$,
                such that
                \begin{equation}\label{finalpiece}
                    \bx_s \bx_{m} + w_{s}u_m = \bx_{m}\bx_s + w_m u_s\in
                    \langle \bx_2, \ldots ,\bx_{m-1}\rangle.
                \end{equation}
                The vector $[u_s,\ w_s^{\bt}]\in \BC^{2(m-1)*}$ is unique up to adding
                multiples of $[u_m,\ w_m^{\bt}]$.
            \item  \label{Fried2item} For every $j\geq 1$ and $2\leq s\leq m-1$
                    \begin{equation}\label{Fried2}
                        \bx_s\bx_m^j w_m = 0 {\rm \  and \ }u_m\bx_m^j \bx_s = 0.
                    \end{equation}%
        \end{enumerate}
         Moreover,  the tensor $T$ is 111-sharp.

        Conversely, any tensor satisfying \eqref{thematrices}   and \ref{uptohereFriedland}--\ref{item4}
        is 111-sharp, concise and not $1_A$-generic, hence
            satisfies~\ref{Fried2item} as well.

        Additionally, for any vectors $u^*\in C'$ and
        $w_m^*\in (C')^* $
         with $u_mu^* = 1 = w^*w_m$,  we may normalize $\bx_m$  such that for
         every  $2\leq s\leq m-1$ 
 \be\label{five}        \bx_mu^* = 0 ,\ w^*\bx_m = 0, \ u_s = w^*\bx_s\bx_m, {\rm\   and \ }  w_s =
 \bx_m\bx_su^*.
 \ene
    \end{proposition}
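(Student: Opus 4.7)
The plan is to first reduce to Friedland's normal form, then exploit the 111-abundance to derive the additional algebraic constraints via a block-matrix analysis of the compatibility equations.

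First, by Proposition~\ref{111iStr+End} (since $T$ is concise and 111-abundant), the tensor $T$ satisfies the $A$-Strassen equations. Since $T$ is not $1_A$-generic but there exists $\alpha\in A^*$ with $T(\alpha) = K_1$ of rank $m-1$, Friedland's normal form \cite[Thm.~3.1]{MR2996364} applies and yields the block presentation \eqref{thematrices} together with property~\ref{uptohereFriedland}. This fixes bases in which $K_1$, $K_s$, $K_m$ have the stated shape. The nonzero conditions on $u_m$, $w_m$ follow because otherwise $K_m$ would be block-diagonal with $T(\alpha+\varepsilon a_m^*)$ of rank $m$ for small $\varepsilon$, contradicting $1_A$-degeneracy.

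Next, I would parameterize the triple intersection. By Lemma~\ref{111intermsOfMatrices}, any $\Omega$ in the triple intersection corresponds uniquely (by conciseness) to a triple $(X,Y,Z)$ with $\sum X(a_i)\ot K_i = \sum a_i\ot K_i Y^{\bt} = \sum a_i\ot Z K_i$. Projecting onto each $a_i$-slot gives $Z K_i = K_i Y^{\bt}$ for all $i$ and, separately, that each $Z K_i$ lies in $T(A^*)$. Writing $Z = \bigl(\begin{smallmatrix} \bz & \z' \\ \z'' & \z_0\end{smallmatrix}\bigr)$ and $Y^{\bt} = \bigl(\begin{smallmatrix} \by & \b' \\ \b'' & \b_0\end{smallmatrix}\bigr)$ and using $K_1 = \bigl(\begin{smallmatrix} \Id_{C'} & 0 \\ 0 & 0 \end{smallmatrix}\bigr)$ forces $\bz=\by$, $\z''=0$, $\b'=0$, so $Z$ is block upper triangular. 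Running through $K_s$ for $2\le s\le m-1$ then gives the commutativity $\bz\bx_s = \bx_s\bz$ and $\b''\bx_s = 0 = \bx_s\b''$ plus $\z'$-constraints, while the requirement $ZK_s\in T(A^*)$ yields $\bz\bx_s\in\langle \bx_1,\ldots,\bx_{m-1}\rangle$, i.e., End-closedness of the span; this produces \ref{item2}. The $K_m$ equation then splits into four blocks: the off-diagonal blocks give $\b''w_m = \z_0 u_m - u_m\bz$ and $\bz w_m - w_m\b_0 = \z' u_m$ style relations that, combined with the End-closure, deliver \ref{item3}, \ref{item3b}, and \eqref{finalpiece} in \ref{item4}. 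The uniqueness of $[u_s, w_s^{\bt}]$ modulo $[u_m, w_m^{\bt}]$ comes from the kernel of the map sending $(u_s,w_s)$ to the right-hand side of \eqref{finalpiece}, which is exactly spanned by $(u_m, w_m)$ by the rank-one hypothesis on $K_1$ together with \ref{uptohereFriedland}.

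For 111-sharpness, observe that the triple $(X,Y,Z)$ is determined by $Z$, and the set of admissible $Z$'s injects into $T(A^*)\oplus(\text{off-block data})$ whose image lands in a subspace of dimension at most $m$; combined with the assumed lower bound $\ge m$ this forces equality. The converse direction reverses the computation: given the listed properties, one explicitly constructs an $m$-dimensional family of compatible triples, one for each element of $\langle\bx_1,\ldots,\bx_{m-1}\rangle\cup\langle\bx_m\rangle$, showing 111-sharpness. Property \ref{Fried2item} is then a formal consequence: iterating the commutation relation \eqref{finalpiece} and using $u_m x^jw_m = 0$ from \ref{uptohereFriedland} kills $\bx_s\bx_m^jw_m$ by induction on $j$, and symmetrically for $u_m\bx_m^j\bx_s$. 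Finally, the normalization \eqref{five} is achieved by exploiting the residual freedom: one may replace $\bx_m$ by $\bx_m - \sum c_s \bx_s - c\,\Id$ and rescale within the off-diagonal blocks; choosing coefficients so that $\bx_m u^* = 0$ and $w^*\bx_m = 0$ is possible because $u_m u^* \neq 0$ and $w^* w_m \neq 0$, and then the formulas $u_s = w^*\bx_s\bx_m$, $w_s = \bx_m\bx_s u^*$ drop out from applying $w^*(-)u^*$ style projections to \eqref{finalpiece}.

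The main obstacle will be the careful bookkeeping in the last paragraph's block computation: there are many off-diagonal constraints from $ZK_m = K_m Y^{\bt}$, and one must disentangle which impose genuinely new conditions (\ref{item3}, \ref{item3b}, \ref{item4}) from which are automatic consequences of the prior items. Organizing the calculation so that 111-sharpness drops out simultaneously with the derivation of these relations (rather than as a separate dimension count) is the cleanest route and avoids redundant casework.
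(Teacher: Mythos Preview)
There is a circularity in your opening move. You invoke Proposition~\ref{111iStr+End} to get Strassen's equations and then Friedland's theorem to obtain the shape~\eqref{thematrices}, including the vanishing off-diagonal blocks of $K_s$ for $2\le s\le m-1$. But in this paper, the corank-one case of Proposition~\ref{111iStr+End} is proved in~\S\ref{111impliessectb} via Remark~\ref{ANFFNF}, which in turn relies on the present proposition: the argument is that 111-abundance yields~\eqref{thematrices}$+$\ref{uptohereFriedland}$+$\ref{Fried2item}, and then Friedland's \emph{converse} gives Strassen. So citing Proposition~\ref{111iStr+End} here assumes what you are trying to prove.

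The substantive missing step is therefore the derivation of the zero off-blocks in $K_s$ directly from 111-abundance. The paper starts only from Atkinson's form (bounded rank), so a priori every $K_i$ may have nonzero last row and column. The mechanism is a dimension count: for any element of the triple intersection, the $K_1$-equation forces $ZK_1$ to have only an upper-left block $\bz$ and to lie in $T(A^*)$; since $T(A^*)\cap (B'\ot C')$ is proper by conciseness, it has dimension $\le m-1$, while the triple intersection has dimension $\ge m$, so some nonzero element has $\bz=0$. Playing that special pair $(\Bmat_0,\Cmat_0)$ against an arbitrary $X\in T(A^*)$ with nonzero last row forces the last row and column of every such $X$ to be proportional to a fixed $(u_m,w_m)$, after which a $\GL(A)$-change concentrates them in a single $K_m$. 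Your block analysis already presupposes this shape for $K_s$ and hence cannot establish it. Two smaller points: your argument for $u_m,w_m\neq0$ is backwards (if $w_m=0$ then $T(\alpha+\varepsilon\alpha_m)$ has rank $\le m-1$, not $m$; the nonvanishing comes instead from conciseness once the zero blocks are in place), and the normalization~\eqref{five} is not achieved by subtracting combinations of $\bx_s$ from $\bx_m$ but by rank-one corrections $w_m\beta+\gamma u_m+\alpha w_mu_m$ arising from a change of the $B$ and $C$ bases.
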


\begin{remark}\label{ANFFNF} Atkinson \cite{MR695915} defined a normal form for spaces of corank $m-r$ where
one element is $\begin{pmatrix}\Id_r&0\\ 0&0\end{pmatrix}$ and all others of the form
$\begin{pmatrix} \bx&W\\ U&0\end{pmatrix}$ and satisfy $U\bx^jW=0$ for every $j\geq 0$. The zero
block is clear and the equation  follows from expanding out the minors of $\begin{pmatrix}\xi \Id_r+ \bx&W\\ U&0\end{pmatrix}$
with a variable $\xi$. This already implies \eqref{thematrices}
and~\ref{uptohereFriedland} except for the zero blocks in the $K_s$
just using bounded rank.

Later, Friedland \cite{MR2996364},  assuming corank one, showed  that the  $A$-Strassen  equations are exactly equivalent
to having a normal form satisfying  \eqref{thematrices},
\ref{uptohereFriedland}, and \ref{Fried2item}. In particular, this shows the 111-equations
imply Strassen's equations in the corank one case.
\end{remark}

    \begin{proof} 
        \def\Bmat{Y}
        \def\Cmat{Z}

We use Atkinson normal form, in particular we use  $K_1$ to identify ${B^*}'$ with $C'$.

        Take $(\Bmat, \Cmat)\in \tend(B) \times \tend(C)$ with $0\neq \Bmat  \actb T = \Cmat \actc T \in
        T(A^*)\ot A$, which exist by 111-abundance. Write  these 
        elements
        following the decompositions of  $B^*$ and $C$ as in the statement:
        \[
            \Bmat^\bt  = \begin{pmatrix}
                \by & w_{\Bmat}\\
                u_{\Bmat} & t_{\Bmat}
            \end{pmatrix}
            \qquad
            \Cmat = \begin{pmatrix}
                \bz & w_{\Cmat}\\
                u_{\Cmat} & t_{\Cmat}
            \end{pmatrix},
        \]
        with $\by\in \tend((B^*)')$, $\bz\in \tend(C')$ etc.
        The equality $\Bmat  \actb T = \Cmat \actc T\in T(A^*)\ot A$ says $ K_i\Bmat^\bt =
        \Cmat K_i\in T(A^*) = \langle K_1, \ldots ,K_m\rangle$. When   $i = 1$ this is
        \begin{equation}\label{equalityOne}
            \begin{pmatrix}
                \by & w_{\Bmat}\\
                0 & 0
            \end{pmatrix} =
            \begin{pmatrix}
                \bz & 0\\
                u_{\Cmat} &0
            \end{pmatrix}\in T(A^*),
        \end{equation}
        so  $w_{\Bmat} = 0$, $u_{\Cmat} = 0$,
        and $\by = \bz$. For future reference, so far we have
        \begin{equation}\label{cohPair}
            \Bmat^\bt = \begin{pmatrix}
                \bz & 0\\
                u_{\Bmat} & t_{\Bmat}
            \end{pmatrix}
            \qquad
            \Cmat = \begin{pmatrix}
                \bz & w_{\Cmat}\\
                0 & t_{\Cmat}
            \end{pmatrix}.
        \end{equation}
        By~\eqref{equalityOne}, for every $(\Bmat, \Cmat)$
        above the matrix $\bz$ belongs to ${B'}\ot C' \cap
    T(A^*)$.
        By conciseness, the subspace  
        ${B'}\ot C' \cap T(A^*)$   is
        proper in $T(A^*)$, so it has dimension less than $m$.
        The triple intersection has dimension at least $m$ as $T$ is
        111-abundant, so
        there exists a pair $(\Bmat, \Cmat)$ as in~\eqref{cohPair}  
           with
        $\bz = 0$,  and $0\neq \Bmat\actb T = \Cmat \actc T$. Take any such
        pair $(\Bmat_0, \Cmat_0)$.
        Consider a matrix $X\in T(A^*)$ with the last row
        nonzero and write it as
        \[
            X = \begin{pmatrix}
                \bx & w_m\\
                u_m & 0
            \end{pmatrix}
        \]
        where $u_m\neq 0$. The equality
        \begin{equation}\label{eq:specialMatrix}
            X \Bmat_0^\bt = \begin{pmatrix}
                w_mu_{\Bmat_0} & w_mt_{\Bmat_0}\\
                0 & 0
            \end{pmatrix} = \Cmat_0 X = \begin{pmatrix}
                w_{\Cmat_0}u_m & 0 \\
                t_{\Cmat_0}u_m & 0
            \end{pmatrix}
        \end{equation}
        implies  $w_mt_{\Bmat_0} = 0$,  $0 = t_{\Cmat_0}$ (as $u_m\neq 0$) and $w_{\Cmat_0}u_m = w_mu_{\Bmat_0}$.
        Observe that $w_{\Cmat_0} \neq 0$ as otherwise $\Cmat_0 = 0$ while we
        assumed $\Cmat_0\actb T\neq 0$.
        Since
        $u_m\neq 0$ and $w_{\Cmat_0}\neq 0$, we have an equality of rank one
        matrices $w_{\Cmat_0}u_m=w_mu_{\Bmat_0}$. Thus  $u_m = \lambda
        u_{\Bmat_0}$ and $w_m = \lambda w_{\Cmat_0}$ for some nonzero
        $\lambda\in \BC$.  It follows that $w_m\neq 0$, so $t_{\Bmat_0} =
    0$.  The
        matrix $X$ was chosen as an arbitrary matrix with nonzero last row and
        we have proven that
        every such matrix yields a vector $[u_m,\ w_m^{\bt}]$ proportional to a
        fixed nonzero vector
        $[u_{\Bmat_0},\ w^{\bt}_{\Cmat_0}]$. It follows that we may choose a basis of $A$
        such that there is only one such
        matrix $X$. The same holds if we assume instead that $X$ has last
        column nonzero. This gives \eqref{thematrices}.  
        
        Returning
        to~\eqref{equalityOne}, from $u_Z = 0$ we deduce that $\bz\in \langle \bx_1, \ldots ,\bx_{m-1}\rangle$.

        Now $\Bmat_0$ and $\Cmat_0$ are determined up
        to scale   as
        \begin{equation}\label{eq:degenerateMats}
            \Bmat_0^\bt = \begin{pmatrix}
                0 & 0\\
                u_m & 0
            \end{pmatrix}
            \qquad
            \Cmat_0 = \begin{pmatrix}
                0 & w_m\\
                0 & 0
            \end{pmatrix},
        \end{equation}
        so there is only a one-dimensional space of pairs $(\Bmat,
        \Cmat)$ with $\Bmat\actb T = \Cmat\actc T$ and upper left block zero.
        The space of possible upper left blocks $\bz$ is
        $\langle \bx_1, \ldots ,\bx_{m-1}\rangle$ so it is $(m-1)$-dimensional.
        Since the triple intersection is at least  $m$-dimensional,
        for
        any matrix $\bz\in \langle \bx_1, \ldots ,\bx_{m-1}\rangle$
        there exist matrices
        $\Bmat^\bt$ and $\Cmat$ as in \eqref{cohPair} with this $\bz$ in the top left corner.

        Consider any matrix as
        in~\eqref{cohPair}
        corresponding to an element $\Bmat \actb T = \Cmat \actc T \in
        T(A^*)\ot A$.
        For $2\leq s\leq m-1$ we get $\bz \bx_s= \bx_s \bz\in \langle
        \bx_1, \ldots ,\bx_{m-1}\rangle$.
        Since for any matrix $\bz\in \langle \bx_1, \ldots
        ,\bx_{m-1}\rangle$ a suitable pair $(\Bmat, \Cmat)$ exists, it follows
        that   $\langle \bx_1, \ldots
        ,\bx_{m-1}\rangle\subseteq \tend(C')$ is abelian and  
        closed under composition proving \ref{item2}.
        The coefficient of $a_m$  in $\Bmat \actb T = \Cmat \actc T$  gives
        \begin{equation}\label{eq:finalFantasy}
            \begin{pmatrix}
                \bx_m\bz + w_m u_{\Bmat} & w_m t_{\Bmat}\\
                u_m \bz & 0
            \end{pmatrix}
            =
            \begin{pmatrix}
                \bz\bx_m + w_{\Cmat} u_m & \bz w_m\\
                t_{\Cmat} u_m & 0
            \end{pmatrix}
            = \lambda_{\Bmat} K_m + K_{\Bmat},
        \end{equation}
        where $\lambda_{\Bmat}\in \BC$ and $K_{\Bmat}\in \langle K_1, \ldots ,K_{m-1}\rangle$.
        It follows that $t_{\Bmat} = \lambda_{\Bmat} = t_{\Cmat}$ and that $\bz w_m =
        \lambda_{\Bmat} w_m$ as well as $u_m \bz = \lambda_{\Bmat} u_m$.
        
         Iterating over
        $\bz\in \langle \bx_1, \ldots
        ,\bx_{m-1}\rangle$,  we see that $w_m$ is a right eigenvector and $u_m$ a left eigenvector of any matrix from
        this space, and $u_m,w_m$ have the same eigenvalues for each matrix.
         We make a $\GL(A)$ coordinate change: we  subtract 
        this common eigenvalue of $\bx_s$ times   $\bx_1$ from $\bx_s$, so that $\bx_sw_m
        = 0$ and $u_m\bx_s=0$ for all  $ 2\leq s\leq m-1$ proving \ref{item3b}.
        Take $\bz\in \langle \bx_2, \ldots ,\bx_{m-1}\rangle$  so that
        $\bz w_m = 0$ and $u_m\bz = 0$. 
        The top left block of~\eqref{eq:finalFantasy} yields
        \begin{equation}\label{zpm}            \bz \bx_m + w_{\Cmat} u_m
            = \bx_m \bz + w_m u_{\Bmat} =
            \lambda_{\Bmat} \bx_m
            + K_Y.
        \end{equation}

        Since $\bz w_m = 0$, the upper right block of \eqref{eq:finalFantasy} implies $\lambda_Y = 0$ and
        we   deduce that 
        \begin{equation}\label{zpmb} \bz \bx_{m} + w_{\Cmat}u_m =
            \bx_{m}\bz + w_m u_{\Bmat}
            = K_{Y}\in
            \langle \bx_2, \ldots ,\bx_{m-1}\rangle.
        \end{equation}
         For a pair $(\Bmat, \Cmat)$ with $\bz = \bx_s$, set   $w_s := w_{\Cmat}$
          and
        $u_{s} := u_{\Bmat}$.
        Such a pair is unique up to adding matrices~\eqref{eq:degenerateMats}, hence $[u_{s},\
        w_{s}^{\bt}]$ is uniquely determined up to adding multiples of $[u_m,\
        w_m^{\bt}]$. With these choices \eqref{zpmb} proves \ref{item4}. Since $\bx_s$ determines $u_s,w_s$
        we see that $T$ is 111-sharp.

        The matrix~\eqref{eq:specialMatrix} lies in $T(A^*)$, hence $w_mu_m\in
        \langle \bx_1, \ldots ,\bx_{m-1}\rangle$. Since
        $  0= (u_mw_m)u_m =u_m(w_mu_m) $ we deduce that $w_mu_m\in \langle \bx_2, \ldots ,\bx_{m-1}\rangle$, proving  
        \ref{item3}.

        Conversely, suppose that the space of matrices $K_1, \ldots , K_m$
        satisfies  \eqref{thematrices}   and \ref{uptohereFriedland}--\ref{item4}. Conciseness and
        $1_A$-degeneracy of $K_1,
        \ldots ,K_m$ follow by reversing the argument above. That $T$ is
        111-sharp follows by constructing   the   matrices as
        above.

        To prove~\ref{Fried2item}, we fix $s$ and
        use
        induction to prove that there exist vectors
        $v_{h}\in  {C'}^* $ for  $h=1,2,  \ldots $ such that for
        every $j\geq 1$ we have 
        \begin{equation}\label{eq:express}
            \bx_m^j\bx_s + \sum_{h=0}^{j-1} \bx_m^h w_mv_{ j-h }\in \langle \bx_2, \ldots
            ,\bx_{m-1}\rangle.
        \end{equation}
        The base case $j=1$ follows
        from~\ref{item4}. To make the step from $j$ to
        $j+1$ use~\ref{item4} for the element~\eqref{eq:express} of
        $\langle \bx_2, \ldots ,\bx_{m-1}\rangle$, to obtain
        \[
            \bx_m\left(\bx_m^j\bx_s + \sum_{h=0}^{j-1} \bx_m^h
            w_mv_{ j-h }\right)+w_mv_{ j+1 } \in \langle \bx_2, \ldots
            ,\bx_{m-1}\rangle,
        \]
        for a vector $v_{ j+1 }\in  C' $. This concludes the
        induction. For every $j$, by~\ref{item3b}, the expression~\eqref{eq:express} is
        annihilated by $u_m$:
        \[
            u_m\cdot \left( \bx_m^j\bx_s + \sum_{h=0}^{j-1} \bx_m^h
            w_mv_{ j-h } \right) =
            0.
        \]
        By~\ref{uptohereFriedland} we have $u_m\bx_m^h w_m = 0$ for every $h$, so
        $u_m\bx_m^j\bx_s = 0$ for all $j$. The assertion 
        $\bx_s\bx_m^j w_m = 0$  is proved similarly.  This
            proves~\ref{Fried2item}. 

        Finally, we proceed to the ``Additionally'' part. The main subtlety here is to
        adjust the bases of $B$ and $C$.
        Multiply the tuple from the
        left and right   respectively by the matrices
        \[
            \begin{pmatrix}
                \Id_{C'} & \gamma\\
                0 & 1
            \end{pmatrix}\in GL(C)
            \qquad
            \begin{pmatrix}
                \Id_{{B'}^{  * }} & 0\\
                \beta & 1
            \end{pmatrix}\in GL( B^* )
        \]
        and then add $\alpha w_mu_m$ to $\bx_m$.
        These three coordinate changes do not change the $\bx_1$, $\bx_s$,  $u_m$, or
        $w_m$ and they transform $\bx_m$ into $\bx_m' := \bx_m +
        w_m\beta + \gamma u_m + \alpha w_mu_m$. Take $(\alpha, \beta, \gamma) :=
        (w^*\bx_mu^*, -w^*\bx_m, -\bx_mu^*)$, then $\bx_m'$ satisfies $w^*\bx_m' =0$ and
        $\bx_m'u^* = 0$.
        Multiplying~\eqref{finalpiece} from the left by $w^*$ and from the
        right by $u^*$ we obtain respectively
        \begin{align*}
            w^*\bx_s\bx_m + (w^* w_s)u_m &= u_s\\
            w_s &= \bx_m\bx_su^* + w_m( u_su^*).
        \end{align*}
        Multiply  the second line   by $w^*$ to  obtain
        $w^* w_s  =  u_su^* $, so
        \[
            [u_s,\ w_s^{\bt}]- w^*(w_s)[u_m, \ w_m^{\bt}] = [w^*\bx_s\bx_m, \ (\bx_m\bx_su^*)^{\bt}].
        \]
        Replace $[u_s,\ w_s^{\bt}]$ by $[u_s,\ w_s^{\bt}]- w^*(w_s)[u_m, \
        w_m^{\bt}]$   to obtain $u_s =
        w^*\bx_s\bx_m$, $w_{s} = \bx_m\bx_su^*$, proving  \eqref{five}.
    \end{proof}

    \begin{example}\label{ex:111necessary}
        Consider the space  of $4\times 4$ matrices  $\bx_1 = \Id_4, \bx_2 = E_{14}, \bx_3 = E_{13}, \bx_4 =
        E_{34}$. Take $\bx_5 = 0$, $u_m = (0, 0, 0, 1)$
        and $w_m = (1, 0, 0, 0)^{\bt}$. The tensor built from this data as in
        Proposition~\ref{1Aonedegenerate111} does \emph{not} satisfy the
        111-condition, since $\bx_3$ and $\bx_4$ do not commute. Hence, it is not
        of minimal border rank. However, this
        tensor does satisfy the $A$-End-closed equations (described in
        \S\ref{strandend}) and Strassen's
        equations (in all directions), and even the $p=1$ Koszul flattenings. This shows that 111-equations are
        indispensable in Theorem~\ref{concise5}; they cannot be replaced by
        these more classical equations.
    \end{example}

\subsection{Proof of   Proposition \ref{111iStr+End}}   \label{111impliessectb}
The $1_ A$-generic case is covered by Proposition
\ref{1Ageneric111}  together with the description of the $A$-Strassen
and $A$-End-closed equations for $1_A$-generic tensors which was given
in~\S\ref{strandend}.

In the corank one case,  Remark \ref{ANFFNF}  observed that  the 111-equations imply  Strassen's equations.
The End-closed equations  are:
Let $\a_1\hd \a_m$ be a basis of $A^*$.
Then for all $\a',\a''\in A^*$,
\be\label{bigenda1gen}
(T(\a')T(\a_1)^{\ww m-1}T(\a'') )
\ww T(\a_1) \ww \cdots \ww  T(\a_m) 
=0\in \La{m+1}(B\ot C).
\ene
Here, for $Z\in B\ot C$, $Z^{\ww m-1}$ denotes  the induced element of $\La{m-1}B\ot \La{m-1}C$, which,
up to choice of volume forms (which does not effect the space of equations), is isomorphic to $C^*\ot B^*$, so $(T(\a')T(\a_1)^{\ww m-1}T(\a'') )\in B\ot C$. In bases $Z^{\ww m-1}$ is just
the cofactor matrix of $Z$.
(Aside: when $T$ is $1_A$-generic these correspond to $\cE_\a(T)$ being closed
under composition of endomorphisms.)
When $T(\a_1)$ is of corank one, using the normal form~\eqref{thematrices}  we see
$T(\a')T(\a_1)^{\ww m-1}T(\a'')$  equals zero  unless $\a'=\a''=\a_m$ in which
case it equals $w_mu_m$ so  the vanishing of~\eqref{bigenda1gen}  is implied by Proposition \ref{1Aonedegenerate111}\ref{item3}.

Finally if the corank is greater than one, both Strassen's equations and
the End-closed equations are trivial.
\qed

    \section{Proof of  Theorem~\ref{ref:111algebra:thm}}\label{111algpfsect}

    We prove
    Theorem~\ref{ref:111algebra:thm}  that $\alg{T}$ is indeed a unital
    subalgebra of $\tend(A)\times
    \tend(B)\times \tend(C)$ which is commutative for $T$ concise.
     The key
    point  is that the actions are  linear with respect to $A$, $B$, and  $C$.
    We have $(\Id, \Id, \Id)\in \alg{T}$ for any $T$.

    \begin{lemma}[composition and independence of actions]\label{ref:independence:lem}
        Let $T\in A\ot B\ot C$. For all $\Amat,\Amat'\in \tend(A)$ and $\Bmat\in
        \tend(B)$, 
        \begin{align}
        \label{71}\Amat\acta
        (\Amat'\acta T) &= (\Amat\Amat')\acta T,\  {\rm and}\\
        \label{eq:independence}
            \Amat\acta (\Bmat\actb T) &= \Bmat\actb (\Amat\acta T).
        \end{align}
        The same holds for $(A,B)$ replaced by $(B,C)$ or $(C,A)$.
    \end{lemma}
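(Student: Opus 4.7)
The plan is to reduce both identities to straightforward facts about tensor products of linear maps. Recall from $\S\ref{111intro}$ that the three actions are defined globally by
\[
    X \acta T = (X \otimes \Id_B \otimes \Id_C)(T), \qquad
    Y \actb T = (\Id_A \otimes Y \otimes \Id_C)(T), \qquad
    Z \actc T = (\Id_A \otimes \Id_B \otimes Z)(T),
\]
with the actions extended linearly in $X$, $Y$, $Z$. It suffices to verify the identities on rank-one tensors $T = a \otimes b \otimes c$ and then extend by linearity, since both sides of each equation are bilinear in their arguments.

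For \eqref{71}, compute on a rank-one tensor:
\[
    X \acta (X' \acta T) = X \acta (X'(a) \otimes b \otimes c) = (X X')(a) \otimes b \otimes c = (X X') \acta T.
\]
This is just the functoriality of the tensor product: composition of endomorphisms extends, after tensoring with identities on the other factors, to composition on $A \otimes B \otimes C$. Linear extension gives \eqref{71} for all $T$.

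For \eqref{eq:independence}, the operators $X \otimes \Id_B \otimes \Id_C$ and $\Id_A \otimes Y \otimes \Id_C$ act on disjoint tensor factors, hence they commute: their composition in either order equals $X \otimes Y \otimes \Id_C$. Concretely, on $T = a \otimes b \otimes c$,
\[
    X \acta (Y \actb T) = X(a) \otimes Y(b) \otimes c = Y \actb (X \acta T),
\]
and linearity finishes the argument. The analogous statements for the pairs $(B,C)$ and $(C,A)$ follow by the same reasoning, permuting the roles of the three factors.

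I do not foresee any real obstacle: the lemma is essentially an unpacking of the definition of the three actions, and the only bookkeeping is making sure that each endomorphism is tensored with identities on the remaining two factors, which is transparent in the global formula above.
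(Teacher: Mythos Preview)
Your proof is correct and takes essentially the same approach as the paper, which simply says ``Directly from the description in Lemma~\ref{111intermsOfMatrices}.'' You work from the equivalent global formula $X\acta T = (X\otimes \Id_B\otimes \Id_C)(T)$ given in \S\ref{111intro} rather than the coordinate description in that lemma, but the content is identical: both identities are immediate from the definition of the actions as tensoring with identities on the other factors.
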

    \begin{proof}
        Directly from the description in Lemma~\ref{111intermsOfMatrices}.
    \end{proof}
    \begin{lemma}[commutativity]\label{ref:commutativity:prop}
        Let $T\in A\ot B\ot C$ and suppose $(\Amat, \Bmat, \Cmat), 
         (\Amat', \Bmat', \Cmat')\in \alg T$. Then
        $\Amat\Amat' \acta T = \Amat'\Amat \acta T$ and similarly for
        the other components. If $T$ is concise, then $\Amat \Amat' =
        \Amat' \Amat$, $\Bmat\Bmat' = \Bmat' \Bmat$ and $\Cmat \Cmat' =
        \Cmat'\Cmat$.
    \end{lemma}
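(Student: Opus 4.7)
The plan is to chain the two identities of Lemma~\ref{ref:independence:lem} together with the compatibility relations defining $\alg{T}$ so that both $(\Amat\Amat')\acta T$ and $(\Amat'\Amat)\acta T$ are rewritten as the same element of $A\ot B\ot C$, namely $(\Bmat\Bmat')\actb T$. For $(\Amat'\Amat)\acta T$ a direct three-step chain suffices: apply \eqref{71} to obtain $\Amat'\acta(\Amat\acta T)$, replace $\Amat\acta T$ by $\Bmat\actb T$ using $(\Amat,\Bmat,\Cmat)\in\alg{T}$, swap $\Amat'$ past $\Bmat$ via \eqref{eq:independence}, replace $\Amat'\acta T$ by $\Bmat'\actb T$ using $(\Amat',\Bmat',\Cmat')\in\alg{T}$, and finally collapse the two adjacent $B$-factor actions by \eqref{71}.

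For $(\Amat\Amat')\acta T$, the same direct chain would land on $(\Bmat'\Bmat)\actb T$, which is a priori different, so the argument instead detours through the $C$-factor: starting from $\Amat\acta(\Amat'\acta T)$, rewrite $\Amat'\acta T = \Cmat'\actc T$, commute $\Amat$ past $\Cmat'$ via \eqref{eq:independence}, replace $\Amat\acta T$ by $\Bmat\actb T$, commute $\Cmat'$ past $\Bmat$, replace $\Cmat'\actc T$ by $\Bmat'\actb T$, and collapse to $(\Bmat\Bmat')\actb T$. Comparing the two chains yields $(\Amat\Amat')\acta T = (\Amat'\Amat)\acta T$, and the analogous identities $(\Bmat\Bmat')\actb T = (\Bmat'\Bmat)\actb T$ and $(\Cmat\Cmat')\actc T = (\Cmat'\Cmat)\actc T$ follow from the cyclic symmetry of the construction.

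For the concise case, the uniqueness statement in Lemma~\ref{111intermsOfMatrices} is precisely the injectivity of the map $\tend(A)\to A\ot B\ot C$ given by $\Amat\mapsto \Amat\acta T$, so the equality $(\Amat\Amat')\acta T = (\Amat'\Amat)\acta T$ upgrades to $\Amat\Amat' = \Amat'\Amat$ in $\tend(A)$; the parallel injectivities in the $B$- and $C$-factors yield $\Bmat\Bmat' = \Bmat'\Bmat$ and $\Cmat\Cmat' = \Cmat'\Cmat$.

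I do not foresee a genuine obstacle — the proof is a bookkeeping exercise in chaining \eqref{71} and \eqref{eq:independence}. The one nonobvious point is recognizing that a symmetric short-route argument applied on both sides would produce $(\Bmat\Bmat')\actb T$ on one side and $(\Bmat'\Bmat)\actb T$ on the other, which is why one of the two reductions must take the detour through the third factor in order for the common expression to appear.
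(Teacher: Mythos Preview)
Your proof is correct and follows essentially the same approach as the paper: route the two sides through different factors using \eqref{71} and \eqref{eq:independence} until they meet at a common expression. The only cosmetic difference is the choice of meeting point: the paper stops each chain after four steps at $\Bmat'\actb(\Cmat\actc T)$ and $\Cmat\actc(\Bmat'\actb T)$ respectively and invokes \eqref{eq:independence} once more to equate them, whereas you push both chains two steps further to land on the single expression $(\Bmat\Bmat')\actb T$.
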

    \begin{proof}
        We will make use of compatibility  to move
        the actions to independent positions and~\eqref{eq:independence} to
        conclude the commutativity, much like one proves that $\pi_2$ in
        topology is commutative. Concretely,   
        Lemma~\ref{ref:independence:lem} implies 
        \begin{align*}
            \Amat\Amat' \acta T &= \Amat \acta (\Amat' \acta T) = \Amat \acta
            (\Bmat'\actb T) = \Bmat'\actb (\Amat \acta T) = \Bmat' \actb
            (\Cmat \actc T), \ {\rm and}\\
            \Amat'\Amat \acta T &= \Amat' \acta (\Amat \acta T) = \Amat' \acta
            (\Cmat \actc T) = \Cmat \actc (\Amat' \acta T) = \Cmat \actc
            (\Bmat'\actb T).
        \end{align*}
        Finally  $\Bmat' \actb
            (\Cmat \actc T)= \Cmat \actc
            (\Bmat'\actb T)$
  by~\eqref{eq:independence}. If $T$ is
        concise, then the equation $(\Amat\Amat' - \Amat'\Amat)\acta T = 0$
        implies $\Amat\Amat' - \Amat'\Amat=0$ by the description in
        Lemma~\ref{111intermsOfMatrices}, so $\Amat$ and $\Amat'$ commute. The
        commutativity of other factors  follows similarly.
    \end{proof}

    \begin{lemma}[closure under composition]\label{ref:Endclosed:prop}
        Let $T\in A\ot B\ot C$ and suppose $(\Amat, \Bmat, \Cmat),
         (\Amat', \Bmat', \Cmat')\in \alg T$. Then  
        $(\Amat\Amat', \Bmat\Bmat', \Cmat\Cmat')\in \alg T$.
    \end{lemma}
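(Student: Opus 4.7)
The plan is to derive this directly from the two preceding lemmas, without introducing any new machinery. Concretely, I need to verify the chain of equalities
\[
    (XX')\acta T \;=\; (YY')\actb T \;=\; (ZZ')\actc T.
\]

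First I would show $(XX')\acta T = (YY')\actb T$. By the composition part of Lemma~\ref{ref:independence:lem} (equation~\eqref{71}), $(XX')\acta T = X\acta(X'\acta T)$. Since $(X',Y',Z')\in\alg{T}$, I can replace $X'\acta T$ by $Y'\actb T$. Then \eqref{eq:independence} lets me swap the order of the two different-factor actions to get $Y'\actb(X\acta T)$. Using $(X,Y,Z)\in\alg{T}$ once more I rewrite this as $Y'\actb(Y\actb T) = (Y'Y)\actb T$. Finally, Lemma~\ref{ref:commutativity:prop} gives $(Y'Y)\actb T = (YY')\actb T$, which is what I want. The second equality $(YY')\actb T = (ZZ')\actc T$ is by the same argument with the roles of $(A,B,C)$ cyclically shifted to $(B,C,A)$.

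There is no real obstacle: the whole argument is a four-step rewriting using \eqref{71}, compatibility of $(X,Y,Z)$ and $(X',Y',Z')$, commutation of cross-factor actions \eqref{eq:independence}, and the commutativity of the same-factor action established in Lemma~\ref{ref:commutativity:prop}. The only subtlety worth flagging is that one cannot invoke commutativity of $Y$ and $Y'$ as endomorphisms of $B$ without assuming $T$ is concise; however, Lemma~\ref{ref:commutativity:prop} is phrased precisely so that $(Y'Y)\actb T = (YY')\actb T$ holds on the tensor without any conciseness hypothesis, so the argument goes through for arbitrary $T$.
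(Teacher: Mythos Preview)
Your proof is correct and essentially identical to the paper's own argument: the paper also chains \eqref{71}, compatibility, \eqref{eq:independence}, compatibility, \eqref{71} to obtain $(XX')\acta T = (Y'Y)\actb T$, then invokes Lemma~\ref{ref:commutativity:prop} to swap to $(YY')\actb T$, and handles the $C$-factor analogously. Your remark that Lemma~\ref{ref:commutativity:prop} gives the needed equality on the tensor level without conciseness is exactly the right observation.
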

    \begin{proof}
        By  Lemma~\ref{ref:independence:lem}  
        \[
            \Amat\Amat' \acta T = \Amat \acta (\Amat'\acta T) = \Amat \acta
            (\Bmat' \actb T) = \Bmat' \actb (\Amat \acta T) = \Bmat'\actb
            (\Bmat \actb T) = \Bmat'\Bmat \actb T.
        \]
        We conclude by applying  Proposition~\ref{ref:commutativity:prop} and obtain
equality with $\Cmat'\Cmat\actc T$ similarly.
    \end{proof}

    \begin{proof}[Proof of Theorem \ref{ref:111algebra:thm}]
        Commutativity follows from Lemma~\ref{ref:commutativity:prop},
          the subalgebra assertion is  Lemma~\ref{ref:Endclosed:prop}, and
        injectivity of projections follows   from
        Lemma~\ref{111intermsOfMatrices} and conciseness.
    \end{proof}

    \begin{remark}
        Theorem~\ref{ref:111algebra:thm} without the commutativity conclusion still holds   for a non-concise tensor 
        $T$.  An example with a noncommutative
            111-algebra is $\sum_{i=1}^r a_i\ot b_i\ot c_i$, where $r \leq
            m-2$. In this case the 111-algebra contains a copy of $\End(\BC^{m-r})$. 
    \end{remark}

    \begin{example}\label{ex:tensorAlgebra}
        If $T$ is a $1_A$-generic 111-abundant tensor, then by
        Proposition~\ref{1Ageneric111} its 111-algebra is isomorphic to
        $\Espace$. In particular, if $T$ is the structure tensor of an
        algebra $\cA$, then $\alg{T}$ is isomorphic to $\cA$.
    \end{example}

    \begin{example}\label{ex:symmetricTensor}
            Consider the symmetric tensor $F\in S^3\BC^5\subseteq \BC^5\ot \BC^5\ot \BC^5$ corresponding to the
            cubic form $x_3x_1^2 + x_4x_1x_2 + x_5x_2^2$, where, e.g.,
            $x_3x_1^2=2(x_3\ot x_1\ot x_1+ x_1\ot x_3\ot x_1+ x_1\ot x_1\ot x_3)$. This cubic has
            vanishing Hessian, hence $F$ is $1$-degenerate. The triple
            intersection of the corresponding tensor is
            $\langle F, x_1^3, x_1^2x_2, x_1x_2^2,
                x_2^3\rangle$
            and its 111-algebra is given by the triples $(x,x,x)$ where
       $$ x\in   \langle \Id, x_1\ot \alpha_3, x_2\ot \alpha_3 + x_1\ot \alpha_4,
                x_2\ot \alpha_4 + x_1\ot \alpha_5, x_2\ot \alpha_5 \rangle,
            $$
            where $\a_j$ is the basis vector dual to $x_j$.
            Since all compositions of basis elements other than $\Id$ are zero,  this 111-algebra
            is  isomorphic to $\BC[\varepsilon_1,
            \varepsilon_2,\varepsilon_3, \varepsilon_4]/(\varepsilon_1,
            \varepsilon_2, \varepsilon_3, \varepsilon_4)^2$.
        \end{example}

        \begin{example}\label{ex:1Aonedegenerate111Algebra}
            Consider a tensor in the normal form of 
            Proposition~\ref{1Aonedegenerate111}. The projection of the 
            111-algebra to $\tend(B)\times \tend(C)$ can be extracted from the proof. In addition to $(\Id,\Id)$ we have:
            \begin{align*}
            &Y_0=\begin{pmatrix}0 & 0 \\ u_m & 0\end{pmatrix}, \ 
            Z_0=\begin{pmatrix} 0 & w_m \\ 0 & 0\end{pmatrix}, \\
            &Y_s=\begin{pmatrix}\bx_s& 0 \\ u_s & 0\end{pmatrix}, \ 
            Z_s=\begin{pmatrix} \bx_s& w_s \\  0 & 0\end{pmatrix}. 
            \end{align*}
            Theorem~\ref{ref:111algebra:thm} implies for matrices in
            $\tend(C)$ that
            \[
                \begin{pmatrix}
                    \bx_s\bx_t & \bx_sw_t\\
                    0 & 0
                \end{pmatrix} =
                \begin{pmatrix}
                    \bx_s & w_s\\
                    0 & 0
                \end{pmatrix}\cdot \begin{pmatrix}
                    \bx_t & w_t\\
                    0 & 0
                \end{pmatrix} =
                \begin{pmatrix}
                    \bx_t & w_t\\
                    0 & 0
                \end{pmatrix}\cdot \begin{pmatrix}
                    \bx_s & w_s\\
                    0 & 0
                \end{pmatrix} =
                \begin{pmatrix}
                    \bx_t\bx_s & \bx_tw_s\\
                    0 & 0
                \end{pmatrix}
            \]
            which
            gives $\bx_sw_t = \bx_tw_s$ for any $2\leq s,t\leq m-1$. Considering matrices in $\tend(B)$ we
            obtain $u_t\bx_s = u_s\bx_t$ for any $2\leq s,t\leq m-1$. (Of
            course,
            these identities are also a consequence of  Proposition~\ref{1Aonedegenerate111}, but it is
            difficult to extract them directly from the Proposition.)
        \end{example}

\section{New obstructions to minimal border rank via the 111-algebra}\label{newobssect}
In this section we characterize 111-abundant tensors in terms of an algebra equipped
with a triple of modules and a module map. We then exploit this extra structure to
obtain new obstructions to minimal border rank via deformation theory.

    \subsection{Characterization of   tensors that are 111-abundant}\label{111abcharsect}

  \begin{definition} A \emph{tri-presented algebra}
    is a commutative unital subalgebra $\cA \subseteq \tend(A) \times \tend(B) \times
    \tend(C)$. 
\end{definition}

    For any concise tensor $T$ its 111-algebra
    $\alg{T}$ is a tri-presented algebra.
    A tri-presented algebra $\cA$ naturally gives an $\cA$-module structure on
    $A$, $B$, $C$. For every $\cA$-module $N$ the space $N^*$ is also an
    $\cA$-module via,   for any $r\in \cA$, $n\in N$, and $f\in N^*$, $(r\cdot f)(n) := f(rn)$.
    (This indeed satisfies $r_2\cdot (r_1\cdot f)=(r_2r_1)\cdot f$ because
    $\cA$ is commutative.) In
    particular, the spaces $A^*$, $B^*$, $C^*$ are $\cA$-modules.
    Explicitly, if $r = (\Amat, \Bmat, \Cmat)\in \cA$ and $\alpha\in A^*$, then
    $r\alpha = \Amat^{\bt}(\alpha)$.
    
  There is  a
    canonical surjective map $\pi\colon A^*\ot B^*\to \ul A^* \ot_\cA \ul B^*$,  
    defined by $\pi(\alpha\ot \beta) = \alpha\ot_{\cA} \beta$
    and extended linearly.
    For any homomorphism $\varphi\colon \ul A^*\ot_\cA \ul B^*\to \ul C$ of $\cA$-modules,
    we obtain a linear map $\varphi\circ\pi\colon A^*\ot B^*\to C$ hence a
    tensor in $A\ot B\ot C$ which we denote by $T_{\varphi}$.

    We need the following lemma, whose proof is left to the reader.
    \begin{lemma}[compatibility with flattenings]\label{ref:flattenings:lem}
        Let $T\in A\ot B\ot C$,   $\Amat \in \tend(A)$, $\Cmat\in \tend(C)$ and
        $\alpha\in A^*$. Consider  $T(\alpha): B^*\to C$. Then
        \begin{align}
            (\Cmat \actc T)(\alpha) &= \Cmat \cdot T(\alpha),\label{eq:flatOne}\\
            T\left(\Amat^{\bt}(\alpha)\right) &= (\Amat \acta
            T)(\alpha), \label{eq:flatTwo}
        \end{align}
        and analogously   for the other factors.\qed
    \end{lemma}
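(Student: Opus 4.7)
The plan is to verify both identities by direct expansion using the explicit description of the actions from Lemma \ref{111intermsOfMatrices}. Writing $T = \sum_{h} \tilde{a}_h \ot K_h$ with $K_h \in B\ot C$ viewed as maps $B^* \to C$, the flattening evaluates to $T(\alpha) = \sum_h \alpha(\tilde{a}_h) K_h$, and this is the only fact about $T(\alpha)$ that will be used.

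For \eqref{eq:flatOne}, I would apply Lemma \ref{111intermsOfMatrices} to write $\Cmat \actc T = \sum_h \tilde{a}_h \ot (\Cmat K_h)$, then evaluate at $\alpha$ to get $\sum_h \alpha(\tilde{a}_h)\, \Cmat K_h$. Pulling $\Cmat$ out of the sum (since it is independent of the index $h$) yields $\Cmat \cdot \sum_h \alpha(\tilde{a}_h) K_h = \Cmat \cdot T(\alpha)$, which is the right-hand side. For \eqref{eq:flatTwo}, the transpose definition gives $(\Amat^\bt \alpha)(\tilde{a}_h) = \alpha(\Amat\, \tilde{a}_h)$, hence
\[
T(\Amat^\bt \alpha) = \sum_h \alpha(\Amat\, \tilde{a}_h)\, K_h.
\]
On the other hand, by Lemma \ref{111intermsOfMatrices}, $\Amat \acta T = \sum_h \Amat(\tilde{a}_h) \ot K_h$, so $(\Amat \acta T)(\alpha) = \sum_h \alpha(\Amat\, \tilde{a}_h)\, K_h$, matching the previous expression.

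The analogous identities for the $B$ factor (and the remaining pairings among $A$, $B$, $C$) follow by the symmetric argument, since Lemma \ref{111intermsOfMatrices} treats all three factors on the same footing and one need only track in which slot $K_h$ is multiplied and in which slot $\tilde{a}_h$ is acted upon. There is no genuine obstacle: the identities are essentially the statement that contracting $T$ against a covector in one slot commutes with applying an endomorphism in a different slot, and that transposing an endomorphism in a slot is exactly what moves that endomorphism across the contraction in the same slot. Hence the proof reduces to two short computations plus a remark that the remaining cases are obtained by permuting the roles of $A$, $B$, $C$.
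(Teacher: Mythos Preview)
Your proof is correct. The paper does not give a proof at all---it simply writes ``whose proof is left to the reader'' and places a \qed after the statement---so your direct expansion via Lemma~\ref{111intermsOfMatrices} is precisely the routine verification the authors had in mind.
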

    \begin{proposition}\label{ex:1AgenericAndModules}
         Let $T$ be a concise 111-abundant tensor. Then $T$ is $1_A$-generic if
            and only if the $\alg{T}$-module $\ul{A}^*$ is generated by a single element, i.e., is a
    cyclic module. More precisely, an element $\alpha\in A^*$ generates the
    $\alg{T}$-module $\ul{A}^*$ if and only if $T(\alpha)$ has maximal rank. 
    \end{proposition}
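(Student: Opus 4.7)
The plan is to first establish the refined statement—that $\alpha \in A^*$ generates the $\alg{T}$-module $\ul{A}^*$ if and only if $T(\alpha)$ has maximal rank $m$—since the main equivalence then follows immediately: $T$ is $1_A$-generic iff some $\alpha$ with $T(\alpha)$ of full rank exists iff some $\alpha$ generates $\ul{A}^*$ iff $\ul{A}^*$ is cyclic. The key preliminary observation will be a direct consequence of Lemma~\ref{ref:flattenings:lem}: for every $r = (\Amat, \Bmat, \Cmat) \in \alg{T}$ and $\alpha \in A^*$,
\[
T(r \cdot \alpha) \;=\; T(\Amat^{\bt}(\alpha)) \;=\; (\Amat \acta T)(\alpha) \;=\; (\Cmat \actc T)(\alpha) \;=\; \Cmat \cdot T(\alpha).
\]
Letting $\pi_C\colon \alg{T} \to \tend(C)$ denote the projection, this identity yields $T(\alg{T}\cdot \alpha) = \pi_C(\alg{T}) \cdot T(\alpha)$, and since $T\colon A^* \to B\ot C$ is injective by $A$-conciseness, the dimension of $\alg{T}\cdot \alpha$ equals that of $\pi_C(\alg{T})\cdot T(\alpha)$.

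For the implication ``$T(\alpha)$ has rank $m$ $\Rightarrow$ $\alpha$ generates $\ul{A}^*$'', I would note that $T(\alpha)\colon B^*\to C$ is then an isomorphism, so right-multiplication by $T(\alpha)$ is injective on $\tend(C)$, and in particular on $\pi_C(\alg{T})$. Combining this with the injectivity of $\pi_C$ on $\alg{T}$ from Theorem~\ref{ref:111algebra:thm} and the hypothesis of 111-abundance gives
\[
\dim\bigl(\pi_C(\alg{T})\cdot T(\alpha)\bigr) \;=\; \dim \pi_C(\alg{T}) \;=\; \dim \alg{T} \;\geq\; m.
\]
The inclusion $\pi_C(\alg{T})\cdot T(\alpha)\subseteq T(A^*)$ together with $\dim T(A^*) = m$ then forces equality, so $\alg{T}\cdot \alpha = A^*$ by injectivity of $T$ on $A^*$.

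For the converse direction, I would argue by contradiction: assuming $\alg{T}\cdot \alpha = A^*$ and that there exists a nonzero $\beta \in \ker T(\alpha) \subseteq B^*$, the key identity gives $T(\alpha')(\beta) = \Cmat(T(\alpha)(\beta)) = 0$ for every $\alpha' = \Amat^{\bt}(\alpha) \in A^*$ with $(\Amat, \Bmat, \Cmat) \in \alg{T}$. Hence $\beta$ lies in $\ker T(\alpha')$ for all $\alpha' \in A^*$, which is exactly the statement $T_B(\beta) = 0$, contradicting $B$-conciseness. I do not anticipate any substantial obstacle: the entire content lies in the flattening identity from Lemma~\ref{ref:flattenings:lem}, which translates the $\alg{T}$-action on $A^*$ into post-composition by $\pi_C(\alg{T})$ on $T(\alpha)$; once that translation is in place, both directions reduce to straightforward dimension counts using 111-abundance, injectivity of $\pi_C$, and $A$- or $B$-conciseness.
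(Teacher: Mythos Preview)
Your proposal is correct and takes essentially the same approach as the paper: both derive the key identity $T(r\cdot\alpha) = \Cmat\cdot T(\alpha)$ from Lemma~\ref{ref:flattenings:lem}, use it together with injectivity of $\pi_C$ and 111-abundance to show $\alpha$ generates when $T(\alpha)$ has full rank, and for the converse observe that $\ker T(\alpha)$ lies in the joint kernel of $T(A^*)$, which vanishes by conciseness. The only difference is cosmetic---the paper phrases the forward direction as injectivity of the map $r\mapsto r\alpha$ rather than as a dimension count on $\pi_C(\alg{T})\cdot T(\alpha)$.
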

    \begin{proof}
        Take any $\alpha\in A^*$ and $r = (\Amat, \Bmat, \Cmat)\in
            \alg{T}$.
        Using~\eqref{eq:flatOne}-\eqref{eq:flatTwo} we have
        \begin{equation}\label{eq:kernel}
            T(r\alpha) = T(\Amat^{\bt}(\alpha)) = (\Amat \acta T)(\alpha) =
            (\Cmat \actc T)(\alpha) = \Cmat \cdot T(\alpha).
        \end{equation}
        Suppose first that $T$ is $1_A$-generic with $T(\alpha)$ of full
        rank.
        If $r\neq 0$,
        then $\Cmat \neq 0$ by the description in
        Lemma~\ref{111intermsOfMatrices}, so $\Cmat \cdot T(\alpha)$
        is nonzero.
        This shows that the homomorphism $\alg{T} \to \ul A^*$
        of $\alg{T}$-modules given by $r\mapsto r\alpha$ is injective. Since $\dim \alg{T} \geq m =
        \dim A^*$, this homomorphism is an isomorphism and so $\ul A^* \simeq \alg{T}$ as $\alg{T}$-modules.

      Now suppose that $\ul{A}^*$ is generated by an element
    $\alpha\in A^*$. This means that for every $\alpha'\in A^*$ there is an
    $r = (\Amat, \Bmat, \Cmat)\in \alg{T}$ such that $r\alpha = \alpha'$.
    From~\eqref{eq:kernel} it follows that $\ker T(\alpha) \subseteq \ker T(\alpha')$. This holds for
    every $\alpha'$, hence $\ker T(\alpha)$ is in the joint kernel of
    $T(A^*)$. By conciseness this joint kernel is zero, hence $\ker
    T(\alpha) = 0$ and $T(\alpha)$ has maximal rank.
    \end{proof}

    \begin{theorem}\label{ref:normalizationCharacterization:thm}
        Let $T\in A\ot B\ot C$ and let $\cA$ be a tri-presented algebra.       Then
      $\cA\subseteq \alg{T}$ if and only if 
    the map $T_C^\bt: A^*\ot B^*\to C$
                factors through $\pi: A^*\ot B^*\ra \ul A^*\ot_\cA \ul B^*$ and induces an $\cA$-module
                homomorphism   $\varphi\colon \ul A^*\ot_\cA \ul B^*\to
                \ul C$. If this holds, then  $T = T_{\varphi}$.
    \end{theorem}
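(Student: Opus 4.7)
The proof is essentially a careful unwinding of definitions: the two conditions (factorization through $\pi$, and being an $\cA$-module homomorphism) together translate exactly into the defining identities of $\alg{T}$.

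My plan is to treat the ``if and only if'' as two separate equivalences whose conjunction is $\cA\subseteq\alg{T}$. Fix $r=(\Amat,\Bmat,\Cmat)\in\cA$ acting on $A^*,B^*,C$ via $\Amat^\bt,\Bmat^\bt,\Cmat$ respectively, and fix $\alpha\in A^*$, $\beta\in B^*$. First, observe that the map $T_C^\bt\colon A^*\ot B^*\to C$ is the bilinear evaluation $(\alpha,\beta)\mapsto T(\alpha)(\beta)$, where $T(\alpha)\colon B^*\to C$. The factorization $T_C^\bt=\bar\varphi\circ\pi$ through $\pi\colon A^*\ot B^*\twoheadrightarrow \ul A^*\ot_\cA\ul B^*$ exists precisely when
\[
    T(r\alpha)(\beta)=T(\alpha)(r\beta)\quad\text{for every }r\in\cA,\ \alpha\in A^*,\ \beta\in B^*.
\]
Using~\eqref{eq:flatTwo} the left-hand side equals $(\Amat\acta T)(\alpha)(\beta)$, and using the analogue of~\eqref{eq:flatOne} for $B$ the right-hand side equals $(T(\alpha)\Bmat^\bt)(\beta)=(\Bmat\actb T)(\alpha)(\beta)$. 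Since $\alpha$ and $\beta$ are arbitrary, the factorization condition is equivalent to $\Amat\acta T=\Bmat\actb T$ for every $(\Amat,\Bmat,\Cmat)\in\cA$.

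Next I would analyze the module-homomorphism condition for the induced $\varphi\colon\ul A^*\ot_\cA\ul B^*\to\ul C$, which reads $\varphi(r\cdot(\alpha\ot_\cA\beta))=r\cdot\varphi(\alpha\ot_\cA\beta)$. Unwinding, the left-hand side is $T(\Amat^\bt\alpha)(\beta)=(\Amat\acta T)(\alpha)(\beta)$, while the right-hand side, by~\eqref{eq:flatOne}, equals $(\Cmat\cdot T(\alpha))(\beta)=(\Cmat\actc T)(\alpha)(\beta)$. Hence this condition is equivalent to $\Amat\acta T=\Cmat\actc T$ for every $r=(\Amat,\Bmat,\Cmat)\in\cA$. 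Combining the two equivalences yields
\[
    \Amat\acta T=\Bmat\actb T=\Cmat\actc T\quad\text{for every }r\in\cA,
\]
which is precisely $\cA\subseteq\alg{T}$. This establishes both directions of the main equivalence.

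For the final assertion $T=T_\varphi$, I would note that by construction $T_\varphi$ is the tensor in $A\ot B\ot C$ whose associated bilinear map $A^*\ot B^*\to C$ is $\varphi\circ\pi$. But the whole point of the factorization in the forward direction is that $\varphi\circ\pi=T_C^\bt$, and a tensor in $A\ot B\ot C$ is determined by its flattening $T_C^\bt\colon A^*\ot B^*\to C$. Hence $T=T_\varphi$.

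The only potential obstacle is bookkeeping the duality conventions — which of $\Amat,\Bmat,\Cmat$ acts via its transpose and which directly, and correspondingly whether we apply~\eqref{eq:flatOne} or~\eqref{eq:flatTwo}. Once these are set up consistently (using the convention that $r\in\cA$ acts on the starred spaces $A^*,B^*$ by the transpose of the corresponding component and on $C$ directly), the argument is a purely formal computation, and no further input beyond Lemma~\ref{ref:flattenings:lem} and the universal property of $\ot_\cA$ is required.
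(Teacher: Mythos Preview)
Your proof is correct and follows essentially the same route as the paper's: both use the universal property of $\ot_{\cA}$ to rewrite the factorization condition as $\cA$-bilinearity, then invoke Lemma~\ref{ref:flattenings:lem} to translate $T(r\alpha,\beta)=T(\alpha,r\beta)$ and $\varphi(r\alpha\ot_{\cA}\beta)=r\varphi(\alpha\ot_{\cA}\beta)$ into $\Amat\acta T=\Bmat\actb T$ and $\Amat\acta T=\Cmat\actc T$, respectively. The only cosmetic difference is that the paper applies~\eqref{eq:flatTwo} (in its $B$-analogue) for the identity $T(\alpha,r\beta)=(\Bmat\actb T)(\alpha,\beta)$, whereas you phrase it via~\eqref{eq:flatOne}; either works since Lemma~\ref{ref:flattenings:lem} is stated symmetrically for all factors.
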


    \begin{proof}
        By the universal property of the tensor
        product over $\cA$, 
        the map $T_C^\bt: A^*\ot B^*\ra C$ factors through $\pi$ if and only if  the bilinear  map $A^*\times B^*\to
        C$ given by $(\alpha, \beta)\mapsto T(\alpha, \beta)$ is $\cA$-bilinear.
        That is,  for every $r = (\Amat, \Bmat,
        \Cmat)\in \cA$,  $\alpha\in A^*$, and $\beta\in B^*$
        one has  $T(r\alpha, \beta) = T(\alpha, r \beta)$.
        By~\eqref{eq:flatTwo},  $T(r\alpha,
        \beta) = (\Amat \acta T)(\alpha, \beta)$ and $T(\alpha, r\beta)
        = (\Bmat \actb T)(\alpha, \beta)$.
        It follows that the factorization
        exists if and only if  for every $r = (\Amat, \Bmat, \Cmat)\in \cA$ we have $\Amat \acta
        T = \Bmat \actb T$. Suppose that this holds and consider the obtained map
        $\varphi\colon \ul A^*\ot_\cA \ul B^*\to \ul C$.
        Thus  for $\alpha\in A^*$ and $\beta\in B^*$ we have
        $\varphi(\alpha\ot_{\cA} \beta) = T(\alpha, \beta)$.
        The map $\varphi$ is a homomorphism of
        $\cA$-modules if and only if  for every $r = (\Amat, \Bmat, \Cmat)\in \cA$ we have
        $\varphi(r\alpha\otR \beta) = r\varphi(\alpha\otR \beta)$.
        By~\eqref{eq:flatOne},   $r\varphi(\alpha\otR \beta) = (\Cmat \actc
        T)(\alpha, \beta)$ and by~\eqref{eq:flatTwo},  $\varphi(r\alpha\otR \beta) = (\Amat \acta
        T)(\alpha, \beta)$.
        These are equal for
        all $\alpha$, $\beta$ if and only if  $\Amat \acta T = \Cmat \actc T$.
         The equality $T = T_{\varphi}$ follows directly from
        definition of $T_{\varphi}$. 
    \end{proof}

     \begin{theorem}[characterization of   concise 111-abundant
        tensors]\label{ref:111abundantChar:cor}
        A concise   tensor that is 111-abundant is isomorphic to a tensor
        $T_{\varphi}$
        associated to a surjective homomorphism of $\cA$-modules 
        \be\label{phimap}\varphi\colon N_1\ot_\cA N_2\to N_3,
        \ene
        where $\cA$ is a commutative associative unital algebra, $N_1$, $N_2$,
        $N_3$ are $\cA$-modules and $\dim N_1 = \dim
        N_2 = \dim N_3 = m \leq \dim \cA$, and moreover for every $n_1\in N_1, n_2\in N_2$
        the maps
        $\varphi(n_1\otR -)\colon N_2\to N_3$ and $\varphi(-\otR n_2)\colon
        N_1\to N_3$ are nonzero. Conversely, any such $T_{\varphi}$ 
        is 111-abundant and concise.
    \end{theorem}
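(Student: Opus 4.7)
The plan is to package this theorem as a direct consequence of Theorems~\ref{ref:111algebra:thm} and~\ref{ref:normalizationCharacterization:thm}, using Lemma~\ref{111intermsOfMatrices} for the dimension count; most of the genuine content has already been established.

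For the forward direction, given a concise 111-abundant tensor $T\in A\ot B\ot C$, I would set $\cA:=\alg{T}$, $N_1:=\ul A^*$, $N_2:=\ul B^*$ and $N_3:=\ul C$. Theorem~\ref{ref:111algebra:thm} guarantees that $\cA$ is a commutative associative unital algebra acting faithfully on each of $A,B,C$. Since trivially $\cA\subseteq\alg{T}$, Theorem~\ref{ref:normalizationCharacterization:thm} produces an $\cA$-module homomorphism $\varphi\colon N_1\ot_\cA N_2\to N_3$ with $T=T_\varphi$. The bound $\dim\cA\geq m$ is precisely 111-abundance, rephrased via the bijection $\alg{T}\cong(\text{triple intersection})$ given by $(\Amat,\Bmat,\Cmat)\mapsto\Amat\acta T$, which is furnished by Lemma~\ref{111intermsOfMatrices}. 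Surjectivity of $\varphi$ amounts to $C$-conciseness of $T$, since $\pi$ is surjective and the image of $\varphi\circ\pi$ coincides with that of $T_C^\bt$, while the nonvanishing of $\varphi(\alpha\otR-)\colon N_2\to N_3$ for every nonzero $\alpha$ is just a restatement of $A$-conciseness, and symmetrically for the $B$ direction.

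For the converse, I would begin with the data $(\cA,N_1,N_2,N_3,\varphi)$ and form $T_\varphi\in N_1^*\ot N_2^*\ot N_3$. The nonvanishing hypotheses on $\varphi$ translate directly into $A$- and $B$-conciseness of $T_\varphi$, and surjectivity of $\varphi$ gives $C$-conciseness. For 111-abundance, let $\bar\cA$ denote the image of $\cA$ inside $\tend(N_1^*)\times\tend(N_2^*)\times\tend(N_3)$. By construction $\varphi$ is $\bar\cA$-linear, so Theorem~\ref{ref:normalizationCharacterization:thm} gives $\bar\cA\subseteq\alg{T_\varphi}$.

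The one step that is not pure bookkeeping is the dimension count $\dim\bar\cA\geq m$ in the converse. I plan to exploit the nonvanishing conditions on $\varphi$ to show that the three kernels of $\cA\to\tend(N_i)$ all coincide: if $r$ acts as zero on $N_2$, then $\varphi(rn_1\otR n_2)=\varphi(n_1\otR rn_2)=0$ for all $n_2$, which forces $rn_1=0$ by the nonvanishing of $\varphi(rn_1\otR-)$; the other implications are analogous. Hence passing to the faithful quotient does not alter the modules, and one may take the standing hypothesis $\dim\cA\geq m$ to refer to this quotient (which is automatic in the forward direction, as $\alg{T}$ already acts faithfully). Once this is granted, 111-abundance of $T_\varphi$ follows immediately from Lemma~\ref{111intermsOfMatrices}, and all remaining checks reduce to the linear-algebra identities in Lemma~\ref{ref:flattenings:lem} already used in the proof of Theorem~\ref{ref:normalizationCharacterization:thm}.
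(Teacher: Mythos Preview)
Your proposal is correct and follows essentially the same route as the paper's proof: set $\cA=\alg{T}$, $N_1=\ul A^*$, $N_2=\ul B^*$, $N_3=\ul C$, invoke Theorem~\ref{ref:normalizationCharacterization:thm}, and read off conciseness and surjectivity from the tensor. Your extra care in the converse---passing to the image $\bar\cA$ and checking that the three annihilators coincide---is more explicit than the paper, which simply writes $\cA\subseteq\alg{T_\varphi}$; both arguments agree once $\dim\cA\ge m$ is read as a condition on the image of $\cA$ in $\tend(N_1)\times\tend(N_2)\times\tend(N_3)$, which (as you note) is automatic in the forward direction and necessary for the converse to hold as stated.
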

   The conditions $\varphi(n_1\otR -)\neq0$, $\varphi(-\otR
    n_2)\neq 0$ for any nonzero $n_1, n_2$ have  appeared in the literature.
    Bergman~\cite{MR2983182} calls $\varphi$ {\it nondegenerate}  if they are satisfied.

      \begin{proof} 
        By
        Theorem~\ref{ref:normalizationCharacterization:thm} a concise   tensor
        $T$ that is 111-abundant is isomorphic
        to $T_{\varphi}$ where $\cA = \alg{T}$, $N_1 =\ul{A}^*$, $N_2 =
        \ul{B}^*$, $N_3 = \ul{C}$. Since $T$ is
        concise, the homomorphism $\varphi$ is onto and the restrictions
        $\varphi(\alpha\otR -)$, $\varphi(-\otR \beta)$ are nonzero for any nonzero
        $\alpha\in A^*$, $\beta\in B^*$. Conversely,
        if we take \eqref{phimap}  and set $A
        := N_1^*$, $B:= N_2^*$, $C := N_3$, then $T_{\varphi}$ is concise by
        the conditions on $\varphi$ and by  
        Theorem~\ref{ref:normalizationCharacterization:thm},  $\cA \subseteq \alg{T_{\varphi}}$ hence $T_{\varphi}$
        is 111-abundant.
    \end{proof}
    \begin{example}\label{ex:1AgenericAndModulesTwo}
        By Proposition~\ref{ex:1AgenericAndModules} we see that for a
        concise $1_A$-generic tensor $T$ the tensor product $\ul A^*\ot_{\cA} \ul  B^*$
        simplifies to $\cA\ot_{\cA} \ul B^* \simeq \ul B^*$. The homomorphism $\varphi\colon
        \ul B^*\to \ul C$ is surjective, hence an isomorphism of $\ul B^*$ and $\ul C$, so the
        tensor $T_{\varphi}$ becomes the multiplication tensor ${\cA}\ot_{\BC} \ul C\to
        \ul C$ of the ${\cA}$-module $\ul C$. One can then choose a surjection $S\to {\cA}$
        from a polynomial ring such that $S_{\leq 1}$ maps isomorphically onto
        $\cA$. This shows how the results of this section generalize~\S\ref{dictsectOne}.
    \end{example}
    In the setting of Theorem~\ref{ref:111abundantChar:cor}, since $T$ is
    concise it follows from Lemma~\ref{111intermsOfMatrices}  that the
    projections of $\alg{T}$ to $\tend(A)$, $\tend(B)$, $\tend(C)$ are one to
    one. This translates into the fact that no nonzero element of $\alg{T}$
    annihilates $A$, $B$ or $C$. The same is then true for $A^*$, $B^*$,
    $C^*$.

\subsection{Two new obstructions to minimal border rank}\label{twonew}

\begin{lemma}\label{ref:triplespanalgebra}
    Let $T\in \BC^m\ot \BC^m\ot \BC^m$ be concise, 111-sharp and of minimal border rank. Then
    $\alg{T}$ is smoothable.
\end{lemma}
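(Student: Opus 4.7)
\medskip
\noindent\textbf{Proof plan.} The idea is to use the border rank decomposition of $T$ to exhibit $\alg{T}$ as a flat limit of copies of the semisimple algebra $\BC^m$, which is the definition of smoothability. Write $T = \lim_{\epsilon \to 0} T_\epsilon$ with each $T_\epsilon$ of rank $m$. Since conciseness is an open condition and $T$ is concise, $T_\epsilon$ is concise for $\epsilon$ in a punctured neighborhood of $0$; in particular $T_\epsilon = \sum_{i=1}^m a_i(\epsilon) \otimes b_i(\epsilon) \otimes c_i(\epsilon)$ with $\{a_i(\epsilon)\}$, $\{b_i(\epsilon)\}$, $\{c_i(\epsilon)\}$ bases, so $T_\epsilon$ is $1$-generic. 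By Proposition~\ref{1Ageneric111} (cf.\ Example~\ref{ex:modulesForMinRank} and Example~\ref{ex:tensorAlgebra}), in these bases $\alg{T_\epsilon}$ is the diagonal subalgebra of $\End(A) \times \End(B) \times \End(C)$, hence $\alg{T_\epsilon} \cong \BC^m$ is $m$-dimensional and semisimple.

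View each $\alg{T_\epsilon}$ as a point of the Grassmannian $\mathrm{Gr}(m, \End(A) \oplus \End(B) \oplus \End(C))$. By projectivity of the Grassmannian, after possibly restricting to a one-parameter arc and passing to the flat limit as $\epsilon \to 0$, one obtains an $m$-dimensional limit subspace $\cA_0$. Containment of the unit $(\Id_A,\Id_B,\Id_C)$ and closure under multiplication are closed conditions (polynomial equations in Pl\"ucker coordinates), so $\cA_0$ is an $m$-dimensional unital commutative subalgebra of $\End(A) \times \End(B) \times \End(C)$.

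Finally, the compatibility condition $\Amat \acta T_\epsilon = \Bmat \actb T_\epsilon = \Cmat \actc T_\epsilon$ defining $\alg{T_\epsilon}$ is closed jointly in $T_\epsilon$ and $(\Amat,\Bmat,\Cmat)$, so passing to limits yields $\cA_0 \subseteq \alg{T}$. By 111-sharpness of $T$ we have $\dim \alg{T} = m = \dim \cA_0$, forcing $\cA_0 = \alg{T}$. The family $\{\alg{T_\epsilon}\}$ is then a flat family of $m$-dimensional algebras (flatness follows from constancy of the fiber dimension) with generic fiber the semisimple algebra $\BC^m$ and special fiber $\alg{T}$; this is exactly a smoothing of $\alg{T}$. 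The main technical point is ensuring that the family can be taken to be genuinely algebraic and flat over a curve rather than merely a set-theoretic analytic limit; this is routine given the algebraic nature of the constructions (the border rank locus is algebraic, $\alg{T_\epsilon}$ is cut out by linear conditions varying algebraically in the parameters, and dimension is constant).
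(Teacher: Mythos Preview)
Your proof is correct and follows essentially the same approach as the paper: degenerate $T$ from rank-$m$ tensors $T_\epsilon$, observe that each $\alg{T_\epsilon}\cong\BC^m$, and pass to the limit using 111-sharpness to identify the limit algebra with $\alg{T}$. The paper compresses the semicontinuity step into a single sentence, whereas you have spelled out the Grassmannian limit, the closedness of the unital-commutative-subalgebra conditions, and the inclusion $\cA_0\subseteq\alg{T}$ explicitly; these are exactly the details underlying the paper's terse ``by semicontinuity'' (cf.\ Remark~\ref{rem:111semicontinuity}).
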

\begin{proof}
   By 111-sharpness, 
    the degeneration $T_\ep\to T$ from a minimal rank tensor induces a
    family of triple intersection spaces, hence by semicontinuity it is enough to check for $T_\ep$
    of \emph{rank} $m$. By Example~\ref{ex:tensorAlgebra} each
    $T_\ep$ has 111-algebra $\prod_{i=1}^m \BC$.  Thus the
    111-algebra of $T$ is the limit of algebras isomorphic to
    $\prod_{i=1}^m \BC$, hence smoothable.
\end{proof}
Recall from~\S\ref{1genreview} that for $m\leq 7$ every algebra is smoothable.

As in section~\S\ref{dictsectOne} view $\alg{T}$ as a quotient of a fixed
polynomial ring $S$. Then   the $\alg{T}$-modules $\ul A$, $\ul B$, $\ul C$ become $S$-modules.
\begin{lemma}\label{ref:triplespanmodules}
    Let $T\in \BC^m\ot \BC^m\ot \BC^m$ be concise, 111-sharp and of minimal border rank. Then
    the $S$-modules $\ul A$, $\ul B$, $\ul C$ lie in the principal component of the Quot scheme.
\end{lemma}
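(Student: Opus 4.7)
The plan is to mirror the strategy used for Lemma~\ref{ref:triplespanalgebra}. Choose a one-parameter family $T_\varepsilon$ of tensors of border rank at most $m$ with $T_0 = T$ and with $T_\varepsilon$ of rank exactly $m$ for $\varepsilon \neq 0$; such a family exists because $T$ is of minimal border rank. By Remark~\ref{rem:111semicontinuity}, 111-abundance is a closed condition on concise tensors, and by upper semicontinuity of the dimension of the triple intersection together with 111-sharpness at $\varepsilon=0$, after shrinking the parameter we may assume every member of the family is concise and 111-sharp. Consequently the triple intersections assemble into a flat family of $m$-dimensional spaces, and the 111-algebras $\alg{T_\varepsilon} \subseteq \End(A)\times \End(B) \times \End(C)$ form a flat family of commutative unital subalgebras of dimension $m$, with the $\alg{T_\varepsilon}$-modules $\ul{A}_\varepsilon$, $\ul{B}_\varepsilon$, $\ul{C}_\varepsilon$ varying in a flat family of $m$-dimensional modules specializing to $\ul A$, $\ul B$, $\ul C$.

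For $\varepsilon \neq 0$, since $T_\varepsilon$ has rank $m$, Example~\ref{ex:modulesForMinRank} (and its $B$- and $C$-analogues obtained by permuting the three factors) shows that the modules $\ul{A}_\varepsilon$, $\ul{B}_\varepsilon$, $\ul{C}_\varepsilon$ are semisimple. To recast everything over a fixed polynomial ring $S = \BC[y_1, \ldots, y_{m-1}]$, fix a surjection $S \twoheadrightarrow \alg{T}$ as in \S\ref{dictsectOne} by choosing $m-1$ traceless generators of $\alg{T}$ and extend these continuously to traceless sections of $\alg{T_\varepsilon}$; by generic surjectivity (after possibly shrinking further), the resulting family defines a flat family of surjections $S \twoheadrightarrow \alg{T_\varepsilon}$. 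Pulling back the module structures gives flat families of $m$-dimensional $S$-modules whose general fibers $\ul{A}_\varepsilon$, $\ul{B}_\varepsilon$, $\ul{C}_\varepsilon$ are semisimple and whose special fibers at $\varepsilon=0$ are $\ul A$, $\ul B$, $\ul C$. Each flat family is a morphism from the parameter curve to the Quot scheme parametrizing $m$-dimensional $S$-module quotients, so by definition $\ul A$, $\ul B$, $\ul C$ lie in the closure of the semisimple locus, which is the principal component.

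The main obstacle is the bookkeeping in passing to a fixed polynomial ring $S$: one must verify that the generators of $\alg{T}$ can be lifted to generators of $\alg{T_\varepsilon}$ for $\varepsilon$ in a neighborhood of $0$ and that the resulting surjections $S \to \alg{T_\varepsilon}$ fit into an algebraic family. This uses the flatness of $\alg{T_\varepsilon}$ established in the first step together with Nakayama's lemma applied to the family, since a surjection at the special fiber lifts to surjections nearby. Once this is secured, the Quot-scheme interpretation is immediate from the definition of the principal component as the closure of the locus of semisimple modules.
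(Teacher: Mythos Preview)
Your proposal is correct and follows essentially the same approach as the paper's proof: degenerate from a minimal-rank tensor, observe that the resulting family of 111-algebras induces a family of $S$-modules, and invoke Example~\ref{ex:modulesForMinRank} to see that the generic fiber is semisimple. The paper's proof is three sentences and leaves the passage to a fixed polynomial ring $S$ entirely implicit; you have spelled out this bookkeeping (lifting generators via flatness and Nakayama) more carefully than the paper does, which is fine. One minor point: your invocation of Remark~\ref{rem:111semicontinuity} and upper semicontinuity is a bit roundabout, since you already know directly that $T_\varepsilon$ is 111-sharp for $\varepsilon\neq 0$ (rank-$m$ tensors are $1_A$-generic, so Proposition~\ref{1Ageneric111} applies) and at $\varepsilon=0$ by hypothesis, so the triple intersection has constant dimension $m$ without any semicontinuity argument.
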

\begin{proof}
    As in the proof above, the degeneration $T_\ep\to T$ from a minimal rank tensor induces a
    family of $\alg{T_{\ep}}$ and hence a family of $S$-modules $\ul A_{\ep}$,
    $\ul B_{\ep}$, $\ul C_{\ep}$. These modules are semisimple when $T_{\ep}$ has
    minimal border rank by Example~\ref{ex:modulesForMinRank}.
\end{proof}
Already for $m = 4$ there are $S$-modules outside the principal
component~\cite[\S6.1]{jelisiejew2021components}, \cite{MR1199042}.

\begin{example}\label{ex:failureFor7x7}
    In~\cite[Example~5.3]{MR3682743} the authors exhibit  a $1_A$-generic,
    End-closed, commuting tuple of seven $7\times 7$-matrices that corresponds
    to a tensor $T$ of border rank higher than minimal. By
    Proposition~\ref{1Ageneric111} this tensor is 111-sharp.
    However, the associated module $\ul{C}$ is \emph{not} in the principal
    component, in fact it is a smooth point of another (elementary) component.
    This can be verified using Bia\l{}ynicki-Birula decomposition,
    as in~\cite[Proposition~5.5]{jelisiejew2021components}.  The proof
    of non-minimality of border rank  in \cite[Example~5.3]{MR3682743} used
    different methods.  We  note that the tensor
    associated to this tuple does \emph{not} satisfy all $p=1$ Koszul flattenings.  
\end{example}

\section{Conditions where tensors of bounded rank fail to be concise}\label{noconcise}

\begin{proposition}\label{5notconciseprop} Let $T\in \BC^5\ot \BC^5\ot \BC^5$ be such that
the matrices in $T(A^*)$ have the shape
\[
    \begin{pmatrix}
        0 & 0 & 0 & * & *\\
        0 & 0 & 0 & * & *\\
        0 & 0 & 0 & * & *\\
        0 & 0 & 0 & * & *\\
        * & * & * & * & *
    \end{pmatrix}.
\]
If $T$ is concise, then $T(C^*)$ contains a matrix of rank at least
$4$.
\end{proposition}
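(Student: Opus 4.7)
The plan is to exploit the block structure forced by the shape. Set $B_1 := \langle b_1, b_2, b_3 \rangle$, $B_2 := \langle b_4, b_5 \rangle$, $C_1 := \langle c_1, c_2, c_3, c_4 \rangle$ and $C_2 := \langle c_5 \rangle$. The zero block says that $T$ has no $B_1 \otimes C_1$ component, so
\[
T \in A \otimes \bigl[(B_1 \otimes C_2) \oplus (B_2 \otimes C)\bigr].
\]
Decompose accordingly as $T = T_{112} \otimes c_5 + T_{221} + T_{222} \otimes c_5$, with $T_{112} \in A \otimes B_1$, $T_{221} \in A \otimes B_2 \otimes C_1$, and $T_{222} \in A \otimes B_2$. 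Then $T(\gamma_k) \in A \otimes B_2$ has rank at most $2$ for $k \in \{1,2,3,4\}$, while $T(\gamma_5) = T_{112} + T_{222} \in A \otimes B$.

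The first structural input is $B$-conciseness: since $b_1, b_2, b_3$ appear in $T$ only through $T_{112}$, the induced map $T_{112}\colon A^* \to B_1$ must be surjective. Set $K := \ker(T_{112})$; it is a $2$-dimensional subspace of $A^*$.

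Next I would establish a clean rank formula for any $M \in T(C^*)$ with nonzero $\gamma_5$-coefficient. After rescaling,
\[
M = T(\gamma_5) + \sum_{k=1}^{4} \lambda_k T(\gamma_k) = T_{112} + M_2, \qquad M_2 := T_{222} + \sum_{k=1}^{4} \lambda_k T(\gamma_k) \in A \otimes B_2.
\]
Projecting $\mathrm{image}(M) \subseteq B_1 \oplus B_2$ onto the first factor yields $T_{112}(A^*) = B_1$, so the projection is surjective; its kernel restricted to $\mathrm{image}(M)$ equals $M_2(K)$. Hence $\rank(M) = 3 + \dim M_2(K)$, and it is enough to find $\lambda_1, \ldots, \lambda_4$ for which $M_2|_K \ne 0$.

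Finally I would rule out the opposite using $A$-conciseness. If $M_2|_K \equiv 0$ for every choice of $\lambda$, then setting $\lambda = 0$ forces $T_{222}|_K = 0$, and varying the $\lambda_k$ independently forces $T(\gamma_k)|_K = 0$ for every $k \in \{1,2,3,4\}$. Combined with $T_{112}|_K = 0$, which holds by definition of $K$, this means $T_A$ vanishes identically on the $2$-dimensional subspace $K$, contradicting $A$-conciseness. The argument is mostly bookkeeping once the block decomposition is fixed; the only subtle point is identifying exactly which conciseness hypotheses enter (namely $B$-conciseness for the surjectivity of $T_{112}$, and $A$-conciseness for the final contradiction), so I do not anticipate a serious obstacle.
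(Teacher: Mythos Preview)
Your proof is correct and follows essentially the same approach as the paper's: both use $B$-conciseness to show that the bottom-left row vectors span $\BC^3$ (your surjectivity of $T_{112}$), then use $A$-conciseness to produce the extra rank contribution (your $M_2|_K \neq 0$, their $K_4 \neq 0$). Your version is somewhat more coordinate-free, packaging the argument via the rank formula $\rank(M) = 3 + \dim M_2(K)$, whereas the paper changes basis in $A$ so that $K = \langle \alpha_4, \alpha_5 \rangle$ and exhibits an explicit $4\times 4$ minor, but the content is the same.
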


\begin{proof} 
     Write the elements of $T(A^*)$ as matrices
    \[
        K_i = \begin{pmatrix}
            0 & \star\\
            u_i & \star
        \end{pmatrix}\in \Hom(B^*, C)\quad\mbox{for } i = 1,2, \ldots ,5
    \]
    where $u_i \in \BC^3$. Suppose $T$ is concise. Then the joint kernel of
    $\langle K_1, \ldots ,K_5\rangle$ is zero, so $u_1, \ldots ,u_5$ span
    $\BC^3$. After a change of   coordinates we may assume $u_1$, $u_2$, $u_3$ are
    linearly independent while $u_4 = 0$, $u_5 = 0$.
    Since $K_4\neq 0$, choose a vector $\gamma\in C^*$ such that $\gamma
    \cdot K_4 \neq 0$.
    Choose $\xi\in \BC$ such that $(\gamma_5 + \xi \gamma)\cdot K_4  \neq 0$.
    Note that $T(\gamma_5): B^*\ra A$ has matrix whose rows are the last
    rows of $K_1\hd K_5$.
    We claim that the matrix $T(\gamma_5 + \xi
    \gamma)\colon B^*\to A$ has rank at least four. Indeed, this matrix can be written as
    \[
        \begin{pmatrix}
            u_1 & \star & \star\\
            u_2 & \star & \star\\
            u_3 & \star & \star\\
            0 & \multicolumn{2}{c}{(\gamma_5 + \xi \gamma) \cdot K_4}\\
            0 & \star & \star
        \end{pmatrix}.
    \]
    This concludes the proof.
\end{proof}
 
\begin{proposition}\label{5notconcise} Let $T\in A\ot B\ot C$ with
    $m = 5$ be a  concise tensor. Then
    one of its associated spaces of matrices contains a full rank or corank one
matrix.
\end{proposition}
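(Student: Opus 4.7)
I would argue by contradiction, assuming that every matrix in each of $T(A^*)$, $T(B^*)$, $T(C^*)$ has rank at most $3$, i.e.\ corank at least $2$. The goal is to reduce to Proposition~\ref{5notconciseprop}: if I can place one of the slices into the block shape considered there, the proposition forces a matrix of rank at least $4$ in another slice, contradicting the standing assumption.

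The main step is to analyze $T(A^*)$ as a $5$-dimensional subspace of $\Hom(B^*,C)\cong\BC^5\otimes\BC^5$ whose elements all have rank at most $3$. Conciseness of $T$ prevents $T(A^*)$ from having a common kernel vector in $B^*$ or a common cokernel vector in $C^*$, since either would contradict $B$- or $C$-conciseness. Using the classical structure theory for linear subspaces of matrices of bounded rank (Atkinson, Eisenbud--Harris, Westwick), I would show that after a change of bases in $B$ and $C$, and possibly a transpose swapping the roles of $B$ and $C$, $T(A^*)$ can be put into the block pattern of Proposition~\ref{5notconciseprop}: a $4\times 3$ upper-left zero block with arbitrary entries in the remaining row and two columns. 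The guiding idea is that a $5$-dimensional concise subspace of $\BC^5\otimes\BC^5$ of bounded rank $3$ is small enough that it must admit a nontrivial compression; any ``primitive'' alternative can be ruled out using conciseness directly.

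Once $T(A^*)$ has been placed in this block form, Proposition~\ref{5notconciseprop} applies immediately and produces a matrix in $T(C^*)$ of rank at least $4$, contradicting the assumption that every matrix in $T(C^*)$ has rank at most $3$.

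The main obstacle is the reduction to the block form of Proposition~\ref{5notconciseprop}. The range $\BC^5\otimes\BC^5$ with bounded rank $3$ admits a short list of potentially exceptional, non-compressed examples (coming, for instance, from skew-symmetric or Pfaffian constructions), and each must be addressed. For each such candidate I would either exhibit an explicit basis change into the required block form, or produce explicit joint kernel/cokernel vectors in $B^*$ or $C^*$ that contradict one of the three conciseness conditions, thereby eliminating the case. This case analysis, rather than any single conceptual step, is where the bulk of the work lies.
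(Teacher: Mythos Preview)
Your plan is essentially the same as the paper's: assume $T(A^*)$ has bounded rank $3$, use the Atkinson structure theory to force $T(A^*)$ into the block shape of Proposition~\ref{5notconciseprop}, then invoke that proposition to get a rank $\geq 4$ matrix in $T(C^*)$. The paper carries this out by citing Atkinson's Theorem~A \cite{MR695915} directly: its ``Moreover'' clause rules out $T(A^*)$ being primitive (since $r+1<m$ and $r<4$), and a short case split on the integers $p,q$ in Atkinson's normal form, together with~\cite[Lemma~2]{MR621563} for the bounded-rank-one block, lands you in Proposition~\ref{5notconciseprop} after at most one transpose. Your anticipated difficulty with ``skew-symmetric or Pfaffian'' exceptional primitive spaces does not actually arise here --- Atkinson's dimension bound already excludes primitivity in $5\times 5$ with bounded rank $3$ --- so the case analysis you describe collapses to a couple of lines once you invoke the right statement.
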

\begin{proof}
      Suppose that $T(A^*)$ is of bounded rank three.
        We use~\cite[Theorem~A]{MR695915} and its notation, in particular $r =
        3$.
        By~this theorem and conciseness, the matrices in the
    space $T(A^*)$ have the shape
    \[
        \begin{pmatrix}
            \star  & \star & \star\\
            \star & \mathcal Y &0\\
        \star &0&0
        \end{pmatrix}
    \]
    where the starred part consists of $p$ rows and $q$ columns, for some $p, q\geq 0$, and $\mathcal Y$ forms
    a    primitive  space of bounded rank at most $3 - p - q$.
    Furthermore, since $r+1 < m$ and $r < 2+2$, by \cite[Theorem~A, ``Moreover''~part]{MR695915} we see that
    $T(A^*)$ is not primitive itself, hence at least one of $p$, $q$ is
    positive. If just one is positive, say $p$, then by conciseness
        $\mathcal{Y}$ spans $5-p$ rows and bounded rank $3-p$, which again
    contradicts \cite[Theorem~A, ``Moreover'']{MR695915}. If both are positive,
    we have $p=q=1$ and $\mathcal Y$ is of bounded rank one, so
    by~\cite[Lemma~2]{MR621563}, up to coordinate change, after
    transposing $T(A^*)$ has the shape as in Proposition~\ref{5notconcise}.
\end{proof}

  \begin{proposition}\label{1degensimp}
 In the setting of Proposition \ref{1Aonedegenerate111}, write
 $T'=a_1\ot \bx_1+\cdots + a_{m-1}\ot \bx_{m-1}\in \BC^{m-1}\ot \BC^{m-1}\ot\BC^{m-1}=:
 A'\ot  {C'}^* \ot C'$, where $\bx_1=\Id_{ C' }$.
 If $T$ is $1$-degenerate, then $T'$ is $1_{ {C'}^* }$ and $1_{C'}$-degenerate.
 \end{proposition}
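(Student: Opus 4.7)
The plan is to relate flattenings of $T$ to flattenings of $T'$ directly through the block structure imposed by the normal form of Proposition~\ref{1Aonedegenerate111}.

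First I would compute $T(\gamma)$ and $T(\beta)$ explicitly in the chosen bases. Writing $\gamma = \gamma^{(C')} + \gamma_m\gamma^m$ with $\gamma^{(C')}\in (C')^*$, and $\beta = \eta^{(B')} + \eta_m\beta^m$ with $\eta^{(B')}\in (B')^*$ identified with $C'$ via $K_1$, a direct matrix computation using \eqref{thematrices} gives the block presentations
$$T(\gamma) = \begin{pmatrix} T'(\gamma^{(C')}) & 0 \\ \gamma^{(C')}\bx_m + \gamma_m u_m & \gamma^{(C')}w_m \end{pmatrix},\qquad T(\beta) = \begin{pmatrix} T'(\eta^{(B')}) & 0 \\ \bx_m \eta^{(B')} + \eta_m w_m & u_m\eta^{(B')} \end{pmatrix},$$
where in the first equation $T'(\gamma^{(C')})\in A'\otimes (C')^*$ is obtained by contracting the last $C'$-factor of $T'$, and in the second $T'(\eta^{(B')})\in A'\otimes C'$ is obtained by contracting the middle $(C')^*$-factor.

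Second, I would read off the ranks. In each presentation, the last column of the $m\times m$ matrix vanishes on rows $1,\ldots,m-1$, with the only possibly-nonzero entry $\gamma^{(C')}w_m$, respectively $u_m\eta^{(B')}$, appearing in the last row. Hence $\rank T(\gamma) = \rank T'(\gamma^{(C')}) + 1$ whenever $\gamma^{(C')}w_m\neq 0$, and $\rank T(\gamma)\leq m-1$ otherwise; symmetrically for $T(\beta)$ with $u_m\eta^{(B')}$. Consequently, $T$ is $1_C$-generic iff some $\gamma^{(C')}\in (C')^*$ satisfies both $\gamma^{(C')}w_m\neq 0$ and $\rank T'(\gamma^{(C')})=m-1$; and $T$ is $1_B$-generic iff some $\eta^{(B')}\in C'$ satisfies both $u_m\eta^{(B')}\neq 0$ and $\rank T'(\eta^{(B')})=m-1$.

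Finally I would argue by contrapositive. If $T'$ were $1_{C'}$-generic, the locus of $\gamma^{(C')}\in (C')^*$ with $T'(\gamma^{(C')})$ of maximal rank $m-1$ is a nonempty Zariski-open subset, and since $w_m\neq 0$ by Proposition~\ref{1Aonedegenerate111}, the locus $\{\gamma^{(C')}:\gamma^{(C')}w_m\neq 0\}$ is also nonempty and open. Two nonempty opens in the irreducible space $(C')^*$ must intersect, producing a $\gamma^{(C')}$ witnessing that $T$ is $1_C$-generic — contradicting $1$-degeneracy. The parallel argument, invoking $u_m\neq 0$, shows that if $T'$ were $1_{{C'}^*}$-generic then $T$ would be $1_B$-generic. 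Hence $T'$ is both $1_{{C'}^*}$- and $1_{C'}$-degenerate.

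The proof is essentially bookkeeping in the normal form, and presents no serious obstacle; the one place to be careful is tracking which factor of $T'$ is being contracted, since the identification $(B')^*\cong C'$ provided by $K_1$ swaps the roles of the middle and last $T'$-factors between the $\beta$- and $\gamma$-flattenings.
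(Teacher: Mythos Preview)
Your proof is correct and follows essentially the same contrapositive route as the paper's. The paper's version is terser: instead of writing out the full block decomposition of $T(\beta)$ and $T(\gamma)$, it simply notes that if $T'(c')$ has rank $m-1$ for some $c'\in C'$, then $T(c'+\lambda u^*)$ has rank $m$ for generic $\lambda$, where $u^*$ is the vector with $u_m u^*=1$ from the normal form. Your Zariski-open intersection argument and the paper's one-parameter perturbation are equivalent ways of producing the needed element; the underlying block-triangular rank computation is identical.
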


 \begin{proof} Say $T'$ is $1_{ {C'}^*} $-generic with $T'( c' )$ of rank $m-1$.
     Then $T( c'+\lambda  u^* )$ has rank $m$ for almost all $\lambda\in \BC$,
 contradicting $1$-degeneracy. The $1_{C'}$-generic case is similar.
 \end{proof}
 
 \begin{corollary}\label{noalgcor}  In the setting of Proposition~\ref{1degensimp}, the module
     $\ul{C'}$  associated to $T'({A'}^*)$ via the ADHM correspondence as
     in~\S\ref{dictsectOne} cannot be generated by a single element.
    Similarly, the module $\ul{{C'}^*}$ associated to
     $(T'({A'}^*))^{\bt}$ cannot be generated by a single element. 
 \end{corollary}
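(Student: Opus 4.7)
The plan is to translate cyclicity of each of the modules $\ul{C'}$ and $\ul{{C'}^*}$ into a $1_*$-genericity property of the auxiliary tensor $T'$, and then invoke Proposition~\ref{1degensimp} to rule it out.

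First I would check that $T'$ falls into the scope of the dictionary of~\S\ref{1genreview}. Since $\bx_1 = \Id_{C'}$, the contraction $T'(a_1^*) = \Id_{C'}$ has full rank, so $T'$ is $1_{A'}$-generic; and by Proposition~\ref{1Aonedegenerate111}\ref{item2}, the space $\langle \bx_1,\ldots,\bx_{m-1}\rangle$ is abelian and End-closed, so $T'$ satisfies the $A'$-Strassen equations. Consequently the ADHM module associated to $T'((A')^*)$ as in \S\ref{dictsectOne} is precisely the $\ul{C'}$ in the corollary, and Proposition~\ref{ref:moduleVsAlgebra} is available.

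The core step is then the equivalence \ref{it:One}$\Leftrightarrow$\ref{it:Two} of Proposition~\ref{ref:moduleVsAlgebra}: with the ordering $T'\in A'\ot (C')^*\ot C'$, cyclicity of $\ul{C'}$ amounts to $1_{(C')^*}$-genericity of $T'$. Proposition~\ref{1degensimp} asserts that when $T$ is $1$-degenerate, $T'$ is $1_{(C')^*}$-degenerate, which rules out cyclicity. For the second assertion I would apply the same argument to the transposed family $\langle \bx_1^{\bt},\ldots,\bx_{m-1}^{\bt}\rangle\subseteq \tend((C')^*)$, which is still abelian, End-closed, and contains $\Id_{(C')^*}$; cyclicity of $\ul{{C'}^*}$ then corresponds to $1_{C'}$-genericity of $T'$, once again excluded by Proposition~\ref{1degensimp}.

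The arguments are essentially translations between module-theoretic and tensorial languages. The only point requiring some care is tracking which factor of $T'$ plays the role of ``$B$'' after transposition and confirming that abelianness, End-closedness, and the presence of the identity survive transposition; I expect no substantive obstacle beyond this bookkeeping.
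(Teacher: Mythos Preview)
Your proposal is correct and follows the same approach as the paper: invoke the equivalence \ref{it:One}$\Leftrightarrow$\ref{it:Two} of Proposition~\ref{ref:moduleVsAlgebra} to translate cyclicity of $\ul{C'}$ (respectively $\ul{{C'}^*}$) into $1_{(C')^*}$-genericity (respectively $1_{C'}$-genericity) of $T'$, and then apply Proposition~\ref{1degensimp}. Your version is in fact a bit more careful than the paper's, since you explicitly verify the hypotheses of Proposition~\ref{ref:moduleVsAlgebra} (that $T'$ is $1_{A'}$-generic via $\bx_1 = \Id_{C'}$ and satisfies the $A'$-Strassen equations via Proposition~\ref{1Aonedegenerate111}\ref{item2}) and note that End-closedness ensures ``generated by a single element'' coincides with the condition $S_{\leq 1}c = \ul{C'}$ appearing in~\ref{it:Two}.
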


 \begin{proof} By Proposition~\ref{ref:moduleVsAlgebra}  the module
     $\ul{C'}$ is 
     generated by a single element if and only if $T'$ is $1_{ {C'}^* }$-generic.
     The claim follows from Proposition~\ref{1degensimp}. The second assertion
     follows similarly since $T'$ is not $1_{C'}$-generic. 
 \end{proof}

 \section{Proof of  Theorem~\ref{concise5} in the $1$-degenerate case and Theorem
 \ref{5isom} }\label{m5sect}

Throughout this section  $T\in \BC^5\ot \BC^5\ot \BC^5$ is  a concise $1$-degenerate 111-abundant
    tensor.

    We use the  notation
    of Proposition~\ref{1Aonedegenerate111}
    throughout this section.

 We begin, in \S\ref{prelim7} with a few preliminary results. We then, in \S\ref{restrisom7} prove a 
 variant of the $m=5$ classification result under a more restricted notion of isomorphism and only require
 111-abundance. Then the $m=5$ classification of corank one 111-abundant tensors follows
 easily in \S\ref{isom7} as does the orbit closure containment in \S\ref{orb7}. Finally we give
 two proofs that 
 these tensors are 
 of minimal border rank in \S\ref{end7}.

\subsection{Preliminary results}\label{prelim7}
       
    We first classify admissible three dimensional spaces of  $4\times 4$
    matrices $\langle\bx_2, \bx_3, \bx_4\rangle \subseteq \tend(\BC^4)$.
One could proceed by using the  classification
    \cite[\S3]{MR2118458}  of abelian subspaces of $\tend(\BC^4)$ and then impose
the     additional conditions of  Proposition~\ref{1Aonedegenerate111}.
We instead utilize ideas from the ADHM correspondence to obtain
a short, self-contained proof.

\begin{proposition}\label{nodecomposition} Let $\langle \bx_1=\Id_4,\bx_2, \bx_3,\bx_4\rangle
    \subset \tend(\BC^4)$  be a $4$-dimensional subspace spanned by pairwise commuting
    matrices.
    Suppose there exist nonzero subspaces $V, W\subseteq \BC^4$ with $V\oplus W
    = \BC^4$ which are preserved by $\bx_1, \bx_2, \bx_3, \bx_4$. Then either
    these exists a vector $v \in \BC^4$ with $\langle \bx_1,
    \bx_2,\bx_3,\bx_4\rangle \cdot v =
    \BC^4$ or there exists a vector $v^*\in {\BC^4}^*$ with
    $\langle\bx_1^{\bt}, \bx_2^{\bt},\bx_3^{\bt},\bx_4^{\bt}\rangle v^* =
    {\BC^4}^*$. \end{proposition}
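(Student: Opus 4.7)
The plan is to use Gerstenhaber's bound on commutative subalgebras of matrix rings to show that $E := \langle \bx_1,\bx_2,\bx_3,\bx_4\rangle$ is automatically a unital subalgebra that splits as a product matching the decomposition $\BC^4 = V\oplus W$, and then to produce the cyclic vector from the classification of commutative $\BC$-algebras of dimension at most three.

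First, I set $p = \tdim V$ and $q = \tdim W$ with $p+q=4$, and let $\cA \subseteq \tend(\BC^4)$ denote the commutative subalgebra generated by the $\bx_i$. Both $V$ and $W$ are $\cA$-submodules, yielding commutative subalgebras $\cA|_V \subseteq \tend(V)$ and $\cA|_W \subseteq \tend(W)$; since $\cA$ acts faithfully on $\BC^4$, the natural map $\cA \hookrightarrow \cA|_V \oplus \cA|_W$ is injective. Gerstenhaber's theorem gives $\tdim \cA|_V \leq \lfloor p^2/4 \rfloor + 1$ and $\tdim \cA|_W \leq \lfloor q^2/4\rfloor + 1$, and for each partition $(p,q) \in \{(1,3), (2,2), (3,1)\}$ the sum of these bounds equals exactly $4$ (namely $1+3$, $2+2$, $3+1$). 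Combined with $\tdim \cA \geq \tdim E = 4$, this forces $\cA = E$, $\tdim \cA|_V = p$, $\tdim \cA|_W = q$, and $\cA \cong \cA|_V \times \cA|_W$ as algebras acting componentwise.

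Next, I would invoke the classification of commutative unital $\BC$-algebras of dimension at most $3$: all are Gorenstein except $R := \BC[x,y]/(x,y)^2$. For any Gorenstein $\cB$ of dimension $n$, the regular module is the unique faithful $\cB$-module of dimension $n$ and it is cyclic (via the injective envelope identity $E(\cB/\fm) = \cB$). For the exception $R$, the faithful $3$-dimensional modules are exactly $R$ (cyclic) and its dual $R^*$ (not cyclic), while $(R^*)^*\cong R$ is cyclic again. If both $V, W$ are cyclic over the respective factors---which holds whenever $\cA|_V, \cA|_W$ are both Gorenstein and in particular when $(p,q) = (2,2)$---then for cyclic generators $v_V, v_W$ we have $E(v_V+v_W) = \cA|_V v_V \oplus \cA|_W v_W = \BC^4$, proving the first alternative. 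The only obstruction occurs when, say, $\cA|_W = R$ and $W\cong R^*$; then necessarily $(p,q)=(1,3)$, the dual $W^* \cong R$ is cyclic, and $V^*$ is cyclic trivially since $\tdim V = 1$, so $v_V^* + v_W^*$ realizes a cyclic vector for $E^\bt$ on $(\BC^4)^*$, proving the second alternative; the $(3,1)$ case is symmetric. The main obstacle is identifying $R = \BC[x,y]/(x,y)^2$ as the single non-Gorenstein small algebra responsible for failure of the first alternative, and verifying that the dual formulation precisely absorbs this exception---this is why the proposition is stated as a disjunction.
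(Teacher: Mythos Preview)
Your argument is correct and takes a genuinely different, more structural route than the paper. The paper proceeds by an explicit case analysis: for $\dim V = \dim W = 2$ it lists the two maximal commutative subalgebras of $\tend(\BC^2)$ up to conjugacy and checks that $(1,1)^{\bt}$ works in each; for $\dim V = 3$ it first reduces to the case where the $\bx_s'$ are nilpotent upper-triangular, then splits into four subcases according to which entries $(\bx_s')_{12}$, $(\bx_s')_{23}$ vanish, exhibiting an explicit $v$ or $v^*$ in each. Your approach instead uses the Schur bound $\lfloor n^2/4\rfloor + 1$ (you call it Gerstenhaber's; the bound on the dimension of a maximal commutative subalgebra of $M_n(\BC)$ is due to Schur) to force $E = \cA \cong \cA|_V \times \cA|_W$ with each factor of dimension equal to that of the corresponding block, and then invokes the classification of commutative $\BC$-algebras of dimension at most three together with the fact that over a zero-dimensional Gorenstein algebra the unique faithful module of minimal dimension is the regular one. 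This isolates $R = \BC[x,y]/(x,y)^2$ as the only obstruction and explains transparently why the disjunction in the statement is needed: $R^*$ is the sole non-cyclic case, and its dual is cyclic.

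Your proof is cleaner and more conceptual; the paper's is more elementary and self-contained (no appeal to Schur's bound or to the Gorenstein classification). Two small points you might tighten in a final write-up: the uniqueness claim ``the regular module is the unique faithful $\cB$-module of dimension $n$'' deserves one line of justification (for local Gorenstein $\cB$ every nonzero ideal contains the simple socle, so a faithful module contains a copy of $\cB$; the product case then follows by decomposing), and the parenthetical ``via the injective envelope identity $E(\cB/\fm) = \cB$'' only literally applies in the local case, though the extension to products is routine.
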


\begin{proof} 
    For $h=1,2,3,4$ the matrix $\bx_h$ is block diagonal with blocks
    $\bx_h'\in \tend(V)$ and $\bx_h''\in \tend(W)$.

    Suppose first that $\dim V = 2 = \dim W$. In this case we will prove that
    $v$ exists. The matrices $\bx_h'$ commute
    and commutative subalgebras of $\tend(\BC^2)$ are at
    most $2$-dimensional and are, up to a change of basis, spanned by
    $\Id_{\BC^2}$ and either $\begin{pmatrix}
        0 & 1\\
        0 & 0
    \end{pmatrix}$ or $\begin{pmatrix}
        1 & 0\\
        0 & 0
    \end{pmatrix}$.
    In each of of the two cases, applying the matrices to the vector $(1,
    1)^{\bt}$
    yields the space $\BC^2$.
    Since the space $\langle \bx_1, \bx_2, \bx_3, \bx_4\rangle$ is
    $4$-dimensional, it is, after a change of basis, a
    direct sum of two maximal subalgebras as above. Thus applying $\langle \bx_1, \bx_2, \bx_3, \bx_4\rangle$ to the
    vector $v = (1, 1, 1, 1)^{\bt}$ yields the whole space.

    Suppose now that $\dim V = 3$. If some $\bx_h'$ has at least  two distinct eigenvalues, then
    consider the generalized eigenspaces $V_1$, $V_2$ associated to them and
    suppose $\dim V_1 = 1$. By commutativity, the subspaces $V_1$, $V_2$ are
    preserved by the action of every $\bx_h'$, so the matrices $\bx_h$ also preserve
    the subspaces $W\oplus V_1$ and $V_2$. This reduces us to the previous
    case. Hence, every $\bx_h'$ has a single eigenvalue. Subtracting
     multiples of $\bx_1$ from $\bx_s$
    for $s=2,3,4$, the $\bx_s'$ become nilpotent, hence up to a change of basis in
      $V$,  they have the form
    \[
        \bx_s' = \begin{pmatrix}
            0 & (\bx_{s}')_{12} & (\bx_{s}')_{13}\\
            0 & 0 & (\bx_{s}')_{23}\\
            0 & 0 & 0
        \end{pmatrix}.
    \]
    The space $\langle \bx_2', \bx_3', \bx_4'\rangle$ cannot be
    $3$-dimensional, as it would fill the space of $3\times3$ upper triangular
    matrices, which is non-commutative. So $\langle \bx_2', \bx_3',
    \bx_4'\rangle$ is $2$-dimensional and so some linear combination of the matrices $\bx_2,
    \bx_3 ,\bx_4$ is the
    identity on $W$ and zero on $V$.

    We subdivide into four cases.
    First, if $(\bx_s')_{12}\neq 0$ for some $s$
    and $(\bx_t')_{23}\neq 0$  for some $t\neq s$, then 
    change bases so $(\bx_s')_{23}=0 $ and take $v=(0,p,1,1)^\bt$ such that
 $p(\bx_s')_{12}+(\bx_s')_{13}\neq 0$. Second, if the above fails and 
 $(\bx_s')_{12}\neq 0$ 
    and $(\bx_s')_{23}\neq 0$ for some $s$, then there must be a $t$
    such that $(\bx_t')_{13}\neq 0$ and all other entries are zero, so we may take
    $v = (0, 0,
    1, 1)^{\bt}$. Third, if
    $(\bx_s')_{12}= 0$ for all $s=2,3,4$, then for dimensional reasons we have
    \[
        \langle \bx_2', \bx_3', \bx_4'\rangle = \begin{pmatrix}
            0 & 0 & \star\\
            0 & 0 & \star\\
            0 & 0 & 0
        \end{pmatrix}
    \]
    and again $v = (0, 0,
    1, 1)^{\bt}$ is the required vector. Finally, if $(\bx_s')_{23}= 0$ for all
    $s=2,3,4$, then  arguing as above  $v^*  = (1, 0, 0, 1)$ is the required
    vector.
\end{proof}

\newcommand{\trx}{\chi}
 
    We now prove a series of reductions that will lead to the proof of
    Theorem~\ref{5isom}.
    \begin{proposition}\label{isomRough}
        Let $m = 5$ and $T\in A\ot B\ot C$ be a concise, $1$-degenerate, 111-abundant
        tensor with $T(A^*)$ of corank one. Then up to $\GL(A)\times \GL(B)\times \GL(C)$ action it has
        the form as in Proposition~\ref{1Aonedegenerate111} with
        \begin{equation}\label{eq:uppersquare}
            \bx_s = \begin{pmatrix}
                0 & \trx_s\\
                0 & 0
            \end{pmatrix}, \ \ 2\leq s\leq 4,
        \end{equation}
  where the blocking is $(2,2)\times (2,2)$.
    \end{proposition}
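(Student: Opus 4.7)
The plan is to combine the structural results from \S\ref{1genreview} applied to the $4$-dimensional space $\cA:=\langle \bx_1,\ldots,\bx_4\rangle\subseteq\operatorname{End}(C')$ with an analysis of $4$-dimensional commutative local algebras and their modules. Proposition~\ref{1Aonedegenerate111} already puts $T(A^*)$ in the stated normal form, with $\bx_1=\Id_{C'}$ and $\cA$ an abelian End-closed subalgebra. Since $T$ is $1$-degenerate, Proposition~\ref{1degensimp} and Corollary~\ref{noalgcor} imply that both the $\cA$-module $\ul{C'}$ and its dual $\ul{C'}^*$ are non-cyclic, so by Proposition~\ref{nodecomposition} (contrapositive), $\ul{C'}$ admits no nontrivial $\cA$-invariant direct sum decomposition. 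Faithfulness and commutativity then force $\cA$ to be local via the structure theorem for commutative Artinian algebras, with maximal ideal $\fm:=\langle \bx_2,\bx_3,\bx_4\rangle$.

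The main step is a Hilbert function analysis of $\ul{C'}$ over $\cA$. Writing $d_i:=\dim(\fm^i\ul{C'}/\fm^{i+1}\ul{C'})$, non-cyclicity of $\ul{C'}$ gives $d_0\geq 2$, while non-cyclicity of $\ul{C'}^*$ gives $\dim \operatorname{Soc}(\ul{C'})\geq 2$. A Nakayama-style argument shows that any socle element lying outside $\fm\ul{C'}$ would split off a direct summand, so by indecomposability $\operatorname{Soc}(\ul{C'})\subseteq \fm \ul{C'}$. Combined with $\sum d_i=4$, these constraints force $d_0=2$, $\operatorname{Soc}(\ul{C'})=\fm \ul{C'}$ of dimension~$2$, and $\fm^2 \ul{C'}=0$; faithfulness then yields $\fm^2=0$ in $\cA$ itself, whence $\bx_s\bx_t=0$ for all $s,t\in\{2,3,4\}$.

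To conclude, set $V_1:=\fm\ul{C'}$ (of dimension~$2$) and pick any vector space complement $V_2\subseteq C'$. Then $\bx_s V_1\subseteq \fm^2\ul{C'}=0$ and $\bx_s V_2\subseteq V_1$, so in the adapted basis of $C'=V_2\oplus V_1$ each $\bx_s$ has the form~\eqref{eq:uppersquare}; the required basis changes in $C$ and $B$ (the latter via the identification of ${B^*}'$ with $C'$ by $K_1$ from Proposition~\ref{1Aonedegenerate111}) are realized by an element of $\GL(A)\times\GL(B)\times\GL(C)$. The main obstacle I expect is the Hilbert-function analysis in the second paragraph: a priori $\cA$ could be any of the four isomorphism classes of $4$-dimensional local commutative $\BC$-algebras, and several candidate module types exist over each, but the combined use of indecomposability, faithfulness, and non-cyclicity of both $\ul{C'}$ and its dual is precisely strong enough to rule out every case except the one with $\fm^2=0$.
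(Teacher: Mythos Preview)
Your proposal is correct and follows essentially the same route as the paper's proof, merely rephrased in module-theoretic language. Both arguments use Corollary~\ref{noalgcor} and the contrapositive of Proposition~\ref{nodecomposition} to obtain indecomposability of $\ul{C'}$, then establish that the joint image $\sum_s\operatorname{im}\bx_s$ has codimension $\geq 2$ (your $d_0\geq 2$), that the joint kernel $\bigcap_s\ker\bx_s$ has dimension $\geq 2$ (your $\dim\operatorname{Soc}\geq 2$), and that the kernel is contained in the image (your $\operatorname{Soc}\subseteq\fm\ul{C'}$), forcing equality and the block form. The paper carries out each step by explicit matrix/subspace manipulation (e.g.\ iterating $C'=\langle\bx_2,\bx_3,\bx_4\rangle C'+W$ to prove Nakayama by hand), whereas you invoke the standard lemmas directly; your formulation is more compact, the paper's more self-contained. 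One small point to make explicit when you write this up: the identification $\fm=\langle\bx_2,\bx_3,\bx_4\rangle$ uses that each $\bx_s$ is non-invertible (hence nilpotent once $\cA$ is local), which follows from Proposition~\ref{1Aonedegenerate111}\ref{item3b} since $\bx_s w_m=0$.
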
 
    \begin{proof}
        We apply Proposition~\ref{1Aonedegenerate111}. It remains to
        prove the form~\eqref{eq:uppersquare}.

        By Proposition~\ref{1Aonedegenerate111}\ref{item3b} zero is an eigenvalue
        of every $\bx_s$. Suppose some $\bx_s$ is not nilpotent, so has at least two different eigenvalues. By commutativity,
        its generalized eigenspaces are preserved by the action of $\bx_2, \bx_3,
        \bx_4$, hence yield $V$ and $W$ as in Proposition~\ref{nodecomposition}
        and a contradiction to Corollary~\ref{noalgcor}. We conclude that
        every $\bx_s$ is nilpotent.

    We now prove that the codimension of $\sum_{s=2}^4 \tim \bx_s\subseteq C'$ is at least two.
    Suppose the codimension is at most one and choose $c\in C'$ such that $\sum_{s=2}^4 \tim \bx_s + \BC c
    = C'$.
    Let $\cA\subset \tend(C')$ be the unital subalgebra generated by
    $\bx_2$, $\bx_3$, $\bx_4$ and let $W = \cA \cdot c$.
    The above equality can be rewritten as $\langle \bx_2, \bx_3, \bx_4\rangle C' +
    \BC c = C'$, hence $\langle \bx_2, \bx_3, \bx_4\rangle C' +
    W = C'$. We repeatedly
    substitute the last equality into itself, obtaining
    \[
        C' = \langle \bx_2, \bx_3, \bx_4\rangle C' + W = (\langle \bx_2, \bx_3,
        \bx_4\rangle)^2 C' + W = \ldots = (\langle \bx_2, \bx_3,
        \bx_4\rangle)^{10}C' + W = W,
    \]
    since $\bx_2, \bx_3, \bx_4$ commute and satisfy $\bx_s^4 = 0$. This proves
    that $C' = \cA\cdot c$, again yielding a contradiction
    with Corollary~\ref{noalgcor}.

    Applying the above argument to $\bx_2^{\bt}, \bx_{3}^{\bt},
    \bx_4^{\bt}$  proves that joint kernel of $\bx_2, \bx_3, \bx_4$ is at least two-dimensional.

    We now claim that $\bigcap_{s=2}^4\ker(\bx_s) \subseteq \sum_{s=2}^4 \tim \bx_s$.
    Suppose not and choose $v\in C'$ that lies in
    the joint kernel, but not in the image. Let $W \subseteq C'$ be a subspace
    containing the image and such that $W \oplus \BC v = C'$. Then $\langle
    \bx_2, \bx_3, \bx_4\rangle W \subseteq \langle
    \bx_2, \bx_3, \bx_4\rangle C' \subseteq W$, hence $V = \BC v$ and $W$ yield
    a decomposition as in Proposition~\ref{nodecomposition} and a
    contradiction. The containment $\bigcap_{s=2}^4\ker(\bx_s) \subseteq
    \sum_{s=2}^4 \tim \bx_s$ together with the dimension estimates yield the
    equality $\bigcap_{s=2}^4\ker(\bx_s) = \sum_{s=2}^4 \tim \bx_s$. To obtain
    the form~\eqref{eq:uppersquare} it remains to choose a basis of $C'$ so that the
    first two basis vectors span $\bigcap_{s=2}^4\ker(\bx_s)$.
    \end{proof}

\subsection{Classification of 111-abundant tensors under restricted isomorphism}\label{restrisom7}
     Refining Proposition~\ref{isomRough}, we now prove the following
    classification. 
\begin{theorem}\label{7isom}
    Let $m = 5$.
    Up to $\GL(A)\times \GL(B) \times \GL(C)$ action and swapping the $B$
    and $C$ factors, there are exactly seven
concise $1$-degenerate, 111-abundant tensors in $A\ot B\ot
C$ with $T(A^*)$ of corank one. To describe them explicitly, let 
$$T_{\mathrm{M1}} = a_1\ot(b_1\ot c_1+b_2\ot c_2+b_3\ot c_3+b_4\ot c_4)+a_2\ot
b_3\ot c_1 + a_3\ot b_4\ot c_1+a_4\ot b_4\ot c_2+a_5\ot(b_5\ot c_1+ b_4\ot
c_5)$$ and 
$$T_{\mathrm{M2}} = a_1\ot(b_1\ot c_1+b_2\ot c_2+b_3\ot c_3+b_4\ot
c_4)+a_2\ot( b_3\ot c_1-b_4\ot c_2) + a_3\ot b_4\ot c_1+a_4\ot b_3\ot
c_2+a_5\ot(b_5\ot c_1+b_4\ot c_5).
$$ 
Then the tensors are
\begin{align}
    &T_{\mathrm{M2}} + a_5 \ot (b_1 \ot c_2 - b_3 \ot
    c_4)\label{M2s1}\tag{$T_{\cO_{58}}$}\\
    &T_{\mathrm{M2}}\label{M2s0}\tag{$T_{\cO_{57}}$}\\
    &T_{\mathrm{M1}} + a_5 \ot (b_5 \ot c_2 - b_1 \ot c_2 + b_3 \ot
    c_3)\label{M1aParams}\tag{$\tilde{T}_{\cO_{57}}$}\\
    &T_{\mathrm{M1}} + a_5 \ot b_5 \ot
    c_2\label{M1aNoParams}\tag{$\tilde{T}_{\cO_{56}}$}\\
    &T_{\mathrm{M1}} + a_5 \ot b_2 \ot c_2\label{M1bQ2}\tag{$T_{\cO_{56}}$}\\
    &T_{\mathrm{M1}} + a_5 \ot b_3 \ot c_2\label{M1bQ4}\tag{$T_{\cO_{55}}$}\\
    &T_{\mathrm{M1}}\label{M1bNoParams}\tag{$T_{\cO_{54}}$}
\end{align}
\end{theorem}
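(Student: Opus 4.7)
The plan is to start from the normal form of Proposition~\ref{isomRough}, which already reduces each $\bx_s$ ($s=2,3,4$) to the strictly upper block-triangular form $\bx_s=\begin{pmatrix}0&\trx_s\\0&0\end{pmatrix}$, and then to exploit systematically the remaining data supplied by Proposition~\ref{1Aonedegenerate111}: the vectors $u_m$, $w_m$, the $3$-dimensional subspace $\langle \trx_2,\trx_3,\trx_4\rangle\subseteq \tend(\BC^2)$, and the residual block $\bx_m$.

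First I would split $u_m=(u',u'')$ and $w_m=(w',w'')^\bt$ according to the $2+2$ decomposition of $C'$. Condition~\ref{item3} of Proposition~\ref{1Aonedegenerate111} forces the rank-one matrix $w_mu_m$ to lie in $\langle \bx_2,\bx_3,\bx_4\rangle$, a space of matrices supported only in the upper-right block. This, combined with $u_m\bx_s=0$ and $\bx_s w_m=0$, quickly rules out every configuration except $u'=0$, $w''=0$, with $u''\neq 0$ and $w'\neq 0$. A further basis change normalizes $u''=(0,1)$ and $w'=(1,0)^\bt$, so $w_mu_m$ has $E_{12}$ in its upper-right block, and hence $E_{12}\in\langle \trx_2,\trx_3,\trx_4\rangle$.

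Next I would classify the $3$-dimensional subspaces of $\tend(\BC^2)$ containing $E_{12}$, modulo the residual group of block-diagonal basis changes preserving $K_1$, $u_m$, and $w_m$. The annihilator under the trace pairing is a line in $\{M:M_{21}=0\}\simeq\BC^3$, so the classification reduces to orbit analysis in a $\PP^2$ under a two-parameter residual group. Using Corollary~\ref{noalgcor} to exclude subspaces admitting a cyclic generator and the argument of Proposition~\ref{nodecomposition} to exclude decomposable cases, I expect exactly two admissible orbits: upper-triangular matrices (the M1-case, giving $T_{\mathrm{M1}}$) and traceless matrices $\fsl_2$ (the M2-case, giving $T_{\mathrm{M2}}$). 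For each case I would then determine the admissible $\bx_m$ using conditions~\ref{item4} and~\ref{Fried2item} together with the normalization~\eqref{five}, which forces $\bx_mu^*=w^*\bx_m=0$ and determines each $u_s,w_s$ from $\bx_m$. Direct computation should confine $\bx_m$ to a small affine subspace, and enumerating orbits under the stabilizer of $\langle\trx_s\rangle$ enlarged by the $B\leftrightarrow C$ swap would yield two tensors from M2 ($T_{\cO_{58}}$ and $T_{\cO_{57}}$) and five from M1 ($T_{\cO_{54}}$, $T_{\cO_{55}}$, $T_{\cO_{56}}$, $\tilde T_{\cO_{56}}$, $\tilde T_{\cO_{57}}$).

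Finally I would verify that the seven listed tensors are pairwise nonequivalent. Orbit dimensions, encoded in the subscripts, separate almost all pairs; the remaining potential coincidences between $T_{\cO_{56}}$ and $\tilde T_{\cO_{56}}$, or $T_{\cO_{57}}$ and $\tilde T_{\cO_{57}}$, should be distinguished by computing $111$-algebras (Theorem~\ref{ref:111algebra:thm}, as in Example~\ref{ex:1Aonedegenerate111Algebra}) or by explicit stabilizer inspection. The main obstacle I anticipate lies in the two orbit enumerations: first verifying that the residual stabilizer acting on the $\PP^2$ of subspaces has exactly two orbits relevant to our situation, and then performing the delicate bookkeeping when enumerating $\bx_m$ modulo the stabilizer and the $B\leftrightarrow C$ swap, since it is precisely this last step that decides whether $\tilde T_{\cO_{57}}$ (respectively $\tilde T_{\cO_{56}}$) is genuinely distinct from $T_{\cO_{57}}$ (respectively $T_{\cO_{56}}$) or merely a $B\leftrightarrow C$ transform of it.
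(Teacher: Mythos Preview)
Your overall strategy matches the paper's: start from Proposition~\ref{isomRough}, classify the remaining data (the subspace $\langle\trx_2,\trx_3,\trx_4\rangle$, the vectors $u_m,w_m$, and the block $\bx_m$), and then separate the resulting tensors by symmetry invariants. The paper also parametrizes the $3$-dimensional subspace by its trace-orthogonal line $\langle P\rangle\subseteq\tend(\BC^2)$ and arrives at exactly the cases M1 and M2.

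The difference is the \emph{order} of normalization, and this is where your proposal has a gap. The paper classifies $\langle\trx_s\rangle$ \emph{first}, using the full $\GL_2\times\GL_2$ block-diagonal conjugation, which acts on $P$ by $P\mapsto NPM^{-1}$; the only invariant is $\rank P$, giving M1 ($\rank P=1$) and M2 ($\rank P=2$). Only afterwards does the paper normalize $u_m,w_m$, and in the M1 case this produces a further bifurcation M1a/M1b according to whether $w_{5,2}$ vanishes (the $B\leftrightarrow C$ swap is used precisely here, to force $u_{5,3}=0$). You instead normalize $u_m=(0,0,0,1)$, $w_m=(1,0,0,0)^\bt$ first. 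That is legitimate, but it shrinks the residual group acting on $\langle\trx_s\rangle$ to pairs $(M,N)$ with $M,N$ upper triangular. Under this smaller group the admissible lines $\langle P\rangle$ (which satisfy $P_{21}=0$ since $E_{12}\in\langle\trx_s\rangle$) fall into \emph{four} projective orbits, represented by $\Id$, $E_{11}$, $E_{22}$, $E_{12}$, not two. Your appeal to Corollary~\ref{noalgcor} and Proposition~\ref{nodecomposition} to cut this down is misplaced: those obstructions were already fully spent in the proof of Proposition~\ref{isomRough} and impose no further constraint on $\langle\trx_s\rangle$ once the block form~\eqref{eq:uppersquare} is reached. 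The extra orbits you are missing are exactly what encode the M1a/M1b split in the paper's ordering; your scheme can be repaired, but you must carry three or four subspace cases (after the $B\leftrightarrow C$ swap identifies $E_{11}$ with $E_{22}$) into the $\bx_m$ analysis, not two.

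A smaller point: to separate $T_{\cO_{57}}$ from $\tilde T_{\cO_{57}}$ (and $T_{\cO_{56}}$ from $\tilde T_{\cO_{56}}$) the paper records not just the dimension of the symmetry Lie algebra but the dimensions of its $AB$-, $BC$-, and $CA$-parts, which differ within each pair. Your proposed 111-algebra invariant is unlikely to distinguish these pairs (both members of each pair are 111-sharp with the same algebra), so you would need the ``stabilizer inspection'' route, which amounts to the paper's computation.
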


 These tensors are pairwise non-isomorphic, as we explain below.
For a tensor $T\in A\ot B\ot C$ its annihilator in $\fgl(A) \times \fgl(B) \times
\fgl(C)$ is called its \emph{symmetry Lie algebra}. The
symmetry Lie algebra intersected with $\fgl(A) \times \fgl(B)$ is called the
\emph{AB-part} etc. We list the
dimensions of these Lie algebras below.

A linear algebra computation (see, e.g., \cite{2019arXiv190909518C}) shows that the dimensions of the symmetry Lie algebras are
\[
    \begin{matrix}
        \mbox{case} & \eqref{M2s1} & \eqref{M2s0} &\eqref{M1aParams} &  \eqref{M1aNoParams} &
        \eqref{M1bQ2} & \eqref{M1bQ4} &
        \eqref{M1bNoParams}\\
        \mbox{full}  & 16 & 17& 17 & 18  & 18 & 19 & 20\\
        \mbox{AB-part} & 5 & 5 & 5 & 5 & 6 & 6 & 6 \\
        \mbox{BC-part} & 5 & 6 & 5 & 6 & 5 & 6 & 6 \\
        \mbox{CA-part} & 5 & 5 & 6 & 6 & 6 & 6 & 6 \\
    \end{matrix}
\] 

\begin{proof}[Proof of Theorem~\ref{7isom}]
   We utilize Proposition~\ref{isomRough} and its notation. 
    By conciseness, the matrices $\bx_2$, $\bx_3$, $\bx_4$ are linearly independent, hence form a
    codimension one subspace of $\tend(\BC^2)$. We utilize  the   perfect pairing on
    $\tend(\BC^2)$ given by
    $(A,B)\mapsto \Tr(AB)$, so that $\langle \trx_2, \trx_3, \trx_4\rangle^{\perp}
    \subseteq\tend(\BC^2)$ is one-dimensional, spanned by a matrix $P$.
    Conjugation with an invertible  $4\times 4$  block diagonal matrix with
    $2\times 2$ blocks $M$, $N$ maps $\trx_s$ to $M\trx_s N^{-1}$ and $P$ to
    $NPM^{-1}$. Under such conjugation the orbits are matrices of fixed rank,
    so after  changing bases in $\langle a_2,a_3,a_4\rangle$,  we reduce to the cases
    \begin{align}\tag{M1}\label{eq:M1}
        P = \begin{pmatrix}
            0 & 1\\
            0 & 0
        \end{pmatrix}&\qquad \trx_2 = \begin{pmatrix}
            1 & 0\\
            0 & 0
        \end{pmatrix},\quad \trx_3 = \begin{pmatrix}
            0 & 1\\
            0 & 0
        \end{pmatrix},\quad \trx_4 = \begin{pmatrix}
            0 & 0\\
            0 & 1
        \end{pmatrix}, \ \ and\\
        P = \begin{pmatrix}\tag{M2}\label{eq:M2}
            1 & 0\\
            0 & 1
        \end{pmatrix}&\qquad \trx_2 = \begin{pmatrix}
            1 & 0\\
            0 & -1
        \end{pmatrix},\quad \trx_3 = \begin{pmatrix}
            0 & 1\\
            0 & 0
        \end{pmatrix},\quad \trx_4 = \begin{pmatrix}
            0 & 0\\
            1 & 0
        \end{pmatrix}.
    \end{align}
    
    In both cases the joint right kernel of our matrices is $(*, *, 0,
        0)^{\bt}$ while the joint left kernel is $(0, 0, *, *)$, so $w_5 =
        (w_{5,1}, w_{5,2}, 0, 0)^{\bt}$ and $u_5 = (0,0,u_{5,3},u_{5,4})$.
  
    \subsubsection{Case~\eqref{eq:M2}}\label{ssec:M2}

        In this case there is an involution, namely conjugation
        with
$$\begin{pmatrix}
0&1&0&0&0\\  
1&0&0&0&0\\        
0&0&0&1&0\\  
0&0&1&0&0\\
0&0&0&0&1\end{pmatrix} \in \GL_{ {5}}
 $$
that preserves $P$, and hence $\langle \bx_2,\bx_3,\bx_4\rangle$, while it swaps
  $w_{5,1}$ with $w_{5,2}$ and $u_{5,1}$ with $u_{5,2}$. Using this involution and
  rescaling $c_5$, we assume $w_{5,1} = 1$.
        The matrix
        \[
            \begin{pmatrix}
                u_{5,3} & u_{5,4}\\
                u_{5,3}w_{5,2} & u_{5,4}w_{5,2}
        \end{pmatrix}
        \]
        belongs to $\langle \trx_2, \trx_3, \trx_4\rangle$  by
            Proposition~\ref{1Aonedegenerate111}\ref{item3}, so it is traceless. This
        forces $u_{5,4}\neq 0$. Rescaling $b_5$ we assume $u_{5,4} = 1$. The trace is
        now $u_{5,3} + w_{5,2}$, so $u_{5,3} = -w_{5,2}$.
        The condition~\eqref{finalpiece} applied for $s=2,3,4$ gives
        linear conditions on
        the possible
        matrices $\bx_5$ and jointly they imply that
        \begin{equation}\label{eq:M2lastGeneral}
            \bx_5 = \begin{pmatrix}
                p_1 & p_2 & * & *\\
                p_3 & p_4 & * & *\\
                0 & 0 & p_4 - w_{5,2}(p_1 + p_5) & p_5\\
                0 & 0 & -p_3 - w_{5,2}(p_6 - p_1) & p_6
            \end{pmatrix}
        \end{equation}
        for arbitrary   $p_i\in\BC$ and arbitrary starred entries.
        Using \eqref{five}    with $u^* = (1, 0,
        0, 0)^{\bt}$ and $w^* = (0, 0, 0, 1)$,
        we may change  coordinates to assume that the first row and last
        column of $\bx_5$ are zero, and subtracting a multiple of $\bx_4$ from $\bx_5$ we obtain
        further that the $(3,2)$ entry of $\bx_5$ is zero,  so  
        \[
            \bx_5 = \begin{pmatrix}
                0 & 0 & 0 & 0\\
                p_3 & p_4 & 0 & 0\\
                0 & 0 & p_4 & 0\\
                0 & 0 & -p_3 & 0
            \end{pmatrix}
        \]
      Subtracting $p_4X_1$ from $X_5$ and then adding $p_4$ times
        the last row (column) to the fourth row (column) we arrive at
        \begin{equation}\label{eq:M2lastSpecial}
            \bx_5 = \begin{pmatrix}
                0 & 0 & 0 & 0\\
                p_3 & 0 & 0 & 0\\
                0 & 0 & 0 & 0\\
                0 & 0 & -p_3 & 0
            \end{pmatrix}
        \end{equation}
        for possibly different values of the parameter $p_3$.
        Conjugating with the $5\times 5$ block diagonal matrix 
        \[
            \begin{pmatrix}
                1 & 0 & 0 & 0 & 0\\
                w_{5,2} & 1 & 0 &  0& 0
          \\
            0& 0&    1 & 0& 0\\
            0& 0&     w_{5,2} & 1& 0\\
          0& 0& 0& 0& 
                1
            \end{pmatrix}
        \]
        does not change $P$, hence $\langle \bx_2, \bx_3, \bx_4\rangle$, and it
        does not change $\bx_5$ as well, but it makes $w_{5,2} = 0$. Thus we
        arrive at the case when $w_5 = (1, 0, 0, 0)^{\bt}$, $u_5 = (0, 0, 0,
        1)$ and $\bx_5$ is as in~\eqref{eq:M2lastSpecial}. There are two
        subcases: either $p_3 = 0$ or $p_3\neq 0$. In the latter case,
        conjugation with the diagonal matrix with diagonal   $(1,p_3,1,p_3,1)$ does not change $\langle \bx_2, \bx_3, \bx_4\rangle$ and
        it maps $\bx_5$ to the same matrix but with $p_3 = 1$.
         In summary, in this case we obtain the
            types~\eqref{M2s0} and~\eqref{M2s1}. 

        \subsubsection{Case~\eqref{eq:M1}}

          For every $t\in \BC$ conjugation
            with
$$
                \begin{pmatrix}
                    1 & t&0& 0&0  \\
                      0 & 1& 0&0&0 \\
                        0&0 &1 & t&0\\
                        0&0 &0 & 1&0\\    0&0 &0 & 0&1
                \end{pmatrix}
$$
            preserves $\langle \bx_2,\bx_3,\bx_4\rangle $ and maps $u_5$ to
            $(0, 0, u_{5,3},
            u_{5,4}-tu_{5,3})$ and $w_5$ to $(w_{5,1}+tw_{5,2}, w_{5,2}, 0, 0)^{\bt}$. Taking $t$
            general, we obtain $w_{5,1}, u_{5,4}\neq 0$ and rescaling $b_5, c_5$ we
            obtain $u_{5,4} = 1 = w_{5,1}$. Since $w_5u_5\in\langle \bx_2, \bx_3, \bx_4\rangle$,
            this forces $u_{5,3} = 0$ or $w_{5,2} = 0$. Using~\eqref{finalpiece} again, we obtain that
            \begin{equation}\label{eq:M1lastGeneral}
                \bx_5 = \begin{pmatrix}
                    q_1 & * & * & *\\
                    w_{5,2}(q_1-q_3) & q_2 & * & *\\
                    0 & 0 & q_3 & *\\
                    0 & 0 & u_{5,3}(q_4-q_2) & q_4
                \end{pmatrix}
            \end{equation}
            for arbitrary $q_1, q_2, q_3, q_4\in \BC$ and arbitrary starred
            entries.
            We normalize further. Transposing  (this is the unique point
                of the proof
            where we swap the $B$ and $C$ coordinates)  and swapping $1$ with $4$
            and $2$ with $3$ rows and columns (which is done by conjugation
            with appropriate
            permutation matrix) does not change the space
            $\langle \bx_2, \bx_3, \bx_4\rangle$ or  $\bx_1$ and it maps
            $u_5$, $w_5$
            to $(0, 0, w_{5,2}, w_{5,1})$, $(u_{5,4}, u_{5,3}, 0, 0)^{\bt}$. Using this
            operation if necessary, we may assume $u_{5,3} = 0$.
            By subtracting multiples of $u_5$, $w_5$ and $\bx_2$,
            $\bx_3$, $\bx_4$ we obtain
            \begin{equation}\label{eq:M1lastSpecial}
                \bx_5 = \begin{pmatrix}
                    0 & 0 & 0 & 0\\
                    -q_3w_{5,2} & q_2 & q_4 & 0\\
                    0 & 0 & q_3 & 0\\
                    0 & 0 & 0 & 0
                \end{pmatrix}
            \end{equation}
            Rescaling the second row and column we  reduce to two
            cases:
            \begin{align}\tag{M1a}\label{eq:M1a}
                w_{5,2} & = 1\\
                \tag{M1b}\label{eq:M1b}
                w_{5,2} & = 0
            \end{align}
            \subsubsection*{Case~\eqref{eq:M1a}}\label{sssec:M1a} In this case we have
            $w_5 = (1, 1, 0,
            0)^{\bt}$ and $u_5 = (0, 0, 0, 1)$.
            We first add $q_4\bx_2$ to $\bx_5$ and subtract $q_4
            w_5$ from the fourth column. This sets $q_4=0$
            in~\eqref{eq:M1lastSpecial}.
            Next, we subtract $-q_2X_1$ from $X_5$ and then add
            $q_2 u_5$ to the first column and $q_2 w_5$ to the fourth row. This
            makes $q_2 = 0$ (and changes $q_3$).
            Finally, if $q_3$ is nonzero, we can rescale $\bx_5$ by $q_3^{-1}$
            and rescale the fifth row and column. This yields $q_3 = 1$.
             In summary,  we have two cases: $(q_2, q_3, q_4) = (0, 1, 0)$ and
             $(q_2, q_3, q_4) = (0, 0, 0)$.  These are the
                 types
                 \eqref{M1aNoParams} and~\eqref{M1aParams}. 

                 \subsubsection*{Case~\eqref{eq:M1b}}\label{sssec:M1b} In this case we have $w_5 = (1, 0, 0,
            0)^{\bt}$ and $u_5 = (0, 0, 0, 1)$.

            Subtract $-q_3\bx_1$ from $\bx_5$ and then add
            $q_3 u_5$ to the first column and $q_3 w_5$ to the fourth row. This
            makes $q_3 = 0$ (and changes $q_2$).

            Subcase $q_2 = 0$: Then either $q_4 = 0$ and we obtain
                type~\eqref{M1bNoParams} or we rescale $X_5$ and the fifth
            row and column to obtain $q_4 = 1$. Here $(q_2, q_3, q_4) = (0, 0, 1)$. This is type
            \eqref{M1bQ4}.

            Subcase $q_2 \neq 0$: Then we rescale $X_5$ and  the fifth
            row and column to obtain $q_2 = 1$. Subtract
            $q_4$ times the second column from the third and add $q_4$
            times the third row to the second.  This does not change
            $\bx_1$, \ldots , $\bx_4$ and it changes $\bx_5$ by making $q_4 =
            0$. Here
            $(q_2, q_3, q_4) = (1, 0, 0)$, this is type
             \eqref{M1bQ2}.

            We have shown  that there are at
            most seven isomorphism types up to $\GL(A)\times \GL(B)\times
            \GL(C)$ action, while the dimensions of the Lie algebras and restricted Lie algebras
        show that they are pairwise non-isomorphic. This concludes the proof
        of Theorem~\ref{7isom}.
        \end{proof}
        
        \subsection{Proof of Theorem~\ref{5isom}}\label{isom7}

        \begin{proof}
            We first prove that there are exactly five isomorphism types of
            concise $1$-degenerate 111-abundant up to action of
            $\GL_5(\BC)^{\times 3}\rtimes \FS_3$.
            By Proposition~\ref{5notconcise}, after possibly permuting
            $A$, $B$, $C$, the space $T(A^*)$ has corank one.
            It
            is enough to prove that in the setup of Theorem~\ref{7isom} the
            two pairs of tensors with the
           symmetry Lie
            algebras of the
             same dimension of  are isomorphic.
           Swapping the $A$ and $C$ coordinates of the tensor in
            case~\eqref{M1bQ2} and rearranging rows, columns, and matrices
            gives case~\eqref{M1aNoParams}. Swapping the $A$ and $B$
            coordinates of the tensor in case~\eqref{M1aParams} and
            rearranging rows and columns, we obtain the tensor
            \[
                a_{1}(b_{1}c_{1}+b_{2}c_{2}+b_{3}c_{3}+b_{4}c_{4})+a_{2}
                b_{3}c_{2}
                +a_{3}(b_{4} c_{1}+b_{4}c_{2})
                +a_{4}(b_{3}c_{1}-b_{4}c_{2})
                +a_{5}(b_{3}c_{5}+b_{5}c_{1}+b_{4}c_{5})
            \]
            The space of $2\times 2$ matrices associated to this tensor is
            perpendicular to $\begin{pmatrix}
                1 & 0\\
                1 & -1
            \end{pmatrix}$ which has full rank, hence this tensor is
            isomorphic to one of the~\eqref{eq:M2} cases. The dimension of
            the symmetry Lie algebra shows that it is isomorphic
            to~\eqref{M2s0}.
            This concludes the proof that there are exactly five isomorphism
            types.

\subsection{Proof of the degenerations}\label{orb7}            Write $T \unrhd T'$ if $T$ degenerates to $T'$ and $T \simeq T'$
            if $T$ and $T'$ lie in the same orbit of $\GL_5(\BC)^{\times 3}\rtimes \FS_3$.
            The above yields~$\eqref{M1bQ2} \simeq \eqref{M1aNoParams}$ and
            $\eqref{M1aParams} \simeq \eqref{M2s0}$.
            Varying the parameters in~\S\ref{ssec:M2}, \S\ref{sssec:M1a},
            \S\ref{sssec:M1b} we obtain
            degenerations which give
            \[
                \eqref{M2s1} \unrhd \eqref{M2s0}  \simeq \eqref{M1aParams}
                \unrhd \eqref{M1aNoParams}  \simeq \eqref{M1bQ2} \unrhd
                \eqref{M1bQ4} \unrhd \eqref{M1bNoParams},
            \]
            which proves the required nesting. For example, in
                \S\ref{sssec:M1b} we have a two-parameter family of tensors parameterized by $(q_2,
                q_4)\in \BC^2$. As explained in that subsection, their isomorphism types
                are

                \begin{tabular}{c c c c}
                    & $q_2 \neq0$ & $q_2 = 0$, $q_4\neq 0$ & $q_2 = q_4 = 0$\\
                    & $\eqref{M1bQ2}$ & $\eqref{M1bQ4}$ & $\eqref{M1bNoParams}$
                \end{tabular}

                This exhibits the last two
                degenerations; the others are similar.

            To complete the proof, we need to show that these tensors have
            minimal border rank. By degenerations above, it is enough to show
            this for~\eqref{M2s1}. We give two proofs.
            \color{black}

\subsection{Two proofs that the tensors have minimal border rank}\label{end7}
            \subsubsection{Proof one: the tensor \eqref{M2s1}  lies  in the closure of   minimal border rank $1_A$-generic
                tensors}\label{ex:M2}

            \def\oldb{p_3}

            Our first approach is to prove that~\eqref{M2s1} lies in the
            closure of the locus of $1_A$-generic concise minimal
            border rank tensors. We do this a bit
        more generally, for all tensors in the case~\eqref{eq:M2}.
            By the
            discussion above every such tensor is isomorphic to one where
            $\bx_5$ has the
            form~\eqref{eq:M2lastSpecial} and we will assume  
           that our tensor $T$ has this form for some $\oldb{}\in \BC$.

           Recall the notation from Proposition \ref{1Aonedegenerate111}. 
            Take $u_2 = 0$, $w_2 = 0$, $u_3 := (0, 0, -\oldb{}, 0)$, $w_3^{\bt} = (0, \oldb{}, 0,
            0)$, $u_4 = 0$, $w_4 = 0$.
            We see that $u_s\bx_m = 0$, $\bx_mw_s = 0$, and $w_{s}u_{t} =
            w_{t}u_{s}$ for all
        $s,t$, so for every $ \ep\in \BC^*$ we have a commuting quintuple
            \[
                \Id_5,\quad
                \begin{pmatrix}
                    \bx_s & w_s\\
                    u_s\ep & 0
                \end{pmatrix}\quad s=2,3,4,\quad\mbox{and}\quad
                \begin{pmatrix}
                    \bx_5 & w_5\ep^{-1}\\
                    u_5 & 0
                \end{pmatrix}
            \]
            We check directly that the tuple is End-closed, hence
            by~Theorem~\ref{1stargprim} it corresponds
            to a tensor of minimal border rank.   (Here we only use
            the $m=5$ case of the theorem, which is significantly easier than
            the $m=6$ case.)
            Multiplying the matrices of this tuple from the right by the
            diagonal matrix with entries $1, 1, 1, 1, t$ and then taking
            the limit with $t\to 0$ yields the tuple of matrices
             corresponding to our initial tensor $T$. 

            While we have shown all~\eqref{eq:M2} cases are of minimal border rank, it can be useful for
       applications to have an explicit border rank decomposition. What follows is one such:

       \subsubsection{Proof two: explicit proof of minimal border rank
           for~\eqref{M2s1}} 
           For $t\in \BC^*$, consider the matrices
            \[\hspace*{-.8cm}
                B_1=\begin{pmatrix}
                    0&0&1&1& 0 \\
                    0& 0&-1&-1& 0 \\
                    0& 0&0&0& 0 \\
                    0& 0&0&0& 0 \\
                    0& 0&0&0& 0 \end{pmatrix}, \ \ 
                B_2=\begin{pmatrix}
                    0&0&-1&1& 0 \\
                    0& 0&-1&1& 0 \\
                    0& 0&0&0& 0 \\
                    0& 0&0&0& 0 \\
                    0& 0&0&0& 0 \end{pmatrix},  \ \ 
                B_3=\begin{pmatrix}
                    0&0&0&0& 0 \\
                    0& t&1&0& 0 \\
                    0& t^2&t&0& 0 \\
                    0& 0&0&0& 0 \\
                    0& 0&0&0& 0 \end{pmatrix},
                B_4=\begin{pmatrix}
                    -t&0&0&1& 0 \\
                    0& 0&  0&0& 0 \\
                    0&0&0&0& 0 \\
                    t^2& 0&0&-t& 0 \\
                    0& 0&0&0& 0 \end{pmatrix},
            \]
                \[
                    B_5= (1, -t, 0, -t, t^{2})^{\bt}\cdot (-t, 0, t, 1, t^{2}) = \begin{pmatrix}
                          -t&0&t&1&t^{2}\\
                          t^{2}&0&-t^{2}&-t&-t^{3}\\
                          0&0&0&0&0\\
                          t^{2}&0&-t^{2}&-t&-t^{3}\\
                          -t^{3}&0&t^{3}&t^{2}&t^{4}
                    \end{pmatrix}
            \]
            The limit at $t\to 0$ of this space of matrices is the required
            tuple.  This concludes the proof of Theorem~\ref{5isom}. 
        \end{proof}

 \section{Proof  (1)=(4) in Theorem \ref{1stargprim}} \label{quotreview}

 \subsection{Preliminary remarks}\label{prelimrems}
Let $T\in A\ot B\ot C=\BC^m\ot \BC^m\ot \BC^m$ be $1_A$-generic and satisfy
the $A$-Strassen equations. Let   $E \subseteq \fsl(C)$ be the associated
$m-1$-dimensional space of commuting traceless matrices as in~\S\ref{dictsectOne}.
Let $\ul{C}$ be the associated
module and $S$ the associated polynomial ring, as in \S\ref{dictsectOne}.
By~\S\ref{dictsectOne} the
tensor $T$ has minimal border rank if and only
if the space $E$ is a limit of spaces of simultaneously diagonalizable
matrices if and only if $\ul{C}$ is a limit of semisimple modules.

The \emph{principal component} of the Quot (resp. Hilbert) scheme is the
closure of the set of semisimple modules (resp. algebras). Similarly, the \emph{principal component} of the
space of commuting matrices is the closure of the space of simultaneously
diagonalizable matrices.
A tensor $T$ has minimal border rank if and only if $E$ lies in the principal
component of the
space of commuting matrices  if and only if
$\ul{C}$ lies in the principal component of the Quot  scheme.

Write $\tann(\ul C)=\{s\in S\mid s(\ul C)=0\}$. Let $\a_i$ be a basis of $A^*$ with $T(\a_1)$
of full rank  and $X_i=T(\a_{i })T(\a_1)\inv\in \tend(C)$, for $1\leq i\leq m$.
The algebra of matrices generated by $\Id,X_2\hd X_m$ is isomorphic to $S/\tann(\ul C)$.
The End-closed condition in the language of modules becomes  the requirement that
the algebra of matrices  has dimension (at most) $m$. The tensor $T$ is
assumed to be $A$-concise, i.e., $\tdim\langle \Id, X_2\hd X_m\rangle=m$,
so the algebra  is equal to  this linear span:
$X_iX_j\in \langle
\Id=X_1, X_2\hd X_m\rangle$.

 Our argument proceeds by examining the possible structures of $\ul C$ and
$S/\tann(\ul C)$ and, in each case, proving that $\ul{C}$ lies in the
principal component.  Let $r$ be the minimal number of generators of $\ul{C}$.

In this section we introduce the additional index range
$$
2\leq y,z,q\leq m.
$$

When $S/\tann(\ul C)$ is \emph{local}, i.e., there is a unique maximal ideal $\fm$, we
consider the Hilbert function $H_{\ul C}(k):=\tdim (\fm^k\ul C/\fm^{k+1}\ul
C)$ and by Nakayama's Lemma $H_{\ul C}(0)=r$. 
Similarly, we consider
  the Hilbert function $H_{S/\tann(\ul C)}(k):=\tdim (\fm^k/\fm^{k+1})$.
Since the algebra is local,    $H_{S/\tann(\ul{C})}(0) = 1$. Observe that if $X_yX_zX_w = 0$ for
  all $y,z,w$, then $\tann(\ul{C})$ contains $S_{\geq3}$, which implies 
  $S/\tann(\ul{C})$ is local.  When $H_{S/\tann(\ul{C})}(1) = k<m-1$, we may
work with a  polynomial ring in $k$ variables, $\tilde S=\BC[y_1\hd y_k]$.

  We will use the following results, which  significantly
  restrict the possible structure of $\ul{C}$ and $S/\tann(\ul{C})$.

\begin{enumerate}[label=(\roman*)]
\item\label{stdfact} 
For a finite algebra $\cA=\Pi \cA_t$, with the $\cA_t$ local, the algebra $\cA$ can be
generated by $q$
elements if and only if $H_{\cA_t}(1)\leq q$ for all $t$. 
From the geometric perspective, the number of generators needed is the
smallest dimension of an affine space  the associated scheme can be realized
inside,  and one just chooses the support of each $\cA_t$ to be a different
point of $\BA^{q}$.
\item
\label{rone} When the module $\ul C$ is generated by a single element (so we are in the Hilbert scheme),
and $m\leq 7$, all such modules  lie in the principal component  \cite{MR2579394}.
\item
 \label{rthree}
 By \cite[Cor. 4.3]{jelisiejew2021components}, when $m\leq 10$ and the algebra of matrices generated by $\Id, X_2,
 \ldots ,X_m$ is generated by at most
three generators, then the module lies
in the principal component.
When  $S/\tann(\ul{C})$ is local,
  this happens when $ H_{S/\tann(\ul C)}(1)\leq 3$.
\item
 \label{squarezero} When $m-1\leq 6$,  if $X_yX_z=0$    for all $y,z$,
then the module  lies in the principal component by  \cite[Thm.
6.14]{jelisiejew2021components}. This holds when
$S/\tann(\ul{C})$ is local with $H_{S/\tann(\ul C)}(2)=0$.
\item
\label{613} If $X_yX_zX_w=0$ for all $y,z,w$ (i.e., $H_{S/\tann(\ul C)}(3)=0$), $\tdim\sum\tim(X_yX_z)=1$
(i.e., $H_{S/\tann(\ul C)}(2)=1$),
and $\tdim \cap_{y,z}\tker(X_yX_z)=m-1$, then $(X_2\hd X_m)$ deforms to a tuple
with a matrix having at least two eigenvalues.
Explicitly, there is a normal form so that
$$
X_y=\begin{pmatrix} 0&0& H_y&*&*\\ 0&0&0&*&*\\ 0&0&0&0& G_y\\
0&0&0&0& 0\\0&0&0&0& 0\end{pmatrix} 
$$
where $X_2^2\neq 0$ and all other products are zero.
Then
$$
Y:=\begin{pmatrix} 0&0&0&0& 0\\ 0&0&0&0& 0\\ 0&0&G_2H_2&0& 0\\
0&0&0&0& 0\\0&0&0&0& 0\end{pmatrix}
$$
commutes with all the $X_i$, and the deformation (to a not necessarily traceless tuple)  is $(X_2+\lam Y,X_3\hd X_m)$
by  \cite[Lem.  6.13]{jelisiejew2021components}.
\end{enumerate}

We now show that all End-closed subspaces $\tilde E=\langle \Id, E\rangle $ lie in the principal component when $m=5,6$ by, in each possible case, assuming
the  space is not in the principal component and obtaining a contradiction.

\subsection{Case $m=5$}\label{m51g}

\subsubsection{Case: $E$ contains an element with more than one eigenvalue,
i.e., $E$ is not nilpotent}\label{m5nonlocal}
By \cite[Lem.  3.12]{jelisiejew2021components} this is equivalent to saying the  algebra
$S/\tann(\ul C)$ is a nontrivial product of algebras $\Pi_t \cA_t$. Since $\tdim (S/\tann(\ul C))=5$,
we have for each $t$ that 
$\tdim(\cA_t)\leq 4$ and thus $H_{\cA_t}(1)\leq 3$.
Using \ref{stdfact}, we see $S/\tann(\ul C)$ is generated by at most three elements, a contradiction by \ref{rthree}.

\subsubsection{Case: all elements of $E$ are nilpotent}
In this case $\tann(\ul{C})$ contains $S_{\geq (m-1)m}$ because any nilpotent $m\times m$ matrix raised
to the $m$-th power is zero and we have $m-1$ commuting  matrices that we could multiply together. Thus $S/\tann(\ul{C})$ is
local and we can speak about Hilbert functions.
By \ref{rthree}  we assume $H_{S/\tann(C)}(1)\geq 4$, so
 $H_{S/\tann(\ul C)}(2)=0$. Thus  for all $z,w$, $y_zy_w\in \tann(\ul C)$ and
 we conclude by~\ref{squarezero}.

\subsection{Case $m=6$}\label{m61g}
For non-local $S/\tann(\ul C)$, arguing as in~\S\ref{m5nonlocal}  
the only case is $S/\tann(\ul C) \simeq \cA_1\times \cA_2$ with $\dim\cA_1=1$
and $H_{\cA_2}(1) = 4$, $H_{\cA_2}(2) = 0$. Correspondingly the module  $\ul{C}$ is a
direct sum of modules $\ul{C}_1\oplus \ul{C}_2$, where $\cA_2 \simeq S/\tann(\ul{C}_2)$.
By ~\ref{rthree} and ~\ref{squarezero} the module $\ul{C}_2$ lies in the principal component and
trivially so does $\ul{C}_1$. Hence $\ul{C}$ lies in the principal component.

We  are reduced to the case $S/\tann(\ul C)$ is local. By~\ref{rthree} we assume
$H_{S/\tann(\ul C)}(1)> 3$. Moreover,  if    $H_{S/\tann(\ul C)}(1)=5$, we
have $H_{S/\tann(\ul C)}(2)=0$ and we conclude by~\ref{squarezero}.
Thus the unique Hilbert function $H_{S/\tann(\ul C)}$ left to consider is $(1,4,1)$.

\subsubsection{Case $\tdim \sum_{y,z}\tim(X_yX_z)=1$, i.e., $H_{S/\tann(\ul C)}(2)=1$}
Since for all $y,z$,  $X_yX_z$ lies in the $m$ dimensional space $\langle \Id, X_2\hd X_m\rangle$,
we must have    $\tdim(\cap_{y,z}\tker(X_yX_z))=m-1$
and thus~\ref{613} applies. Let $\ul C(\lam)$ denote $\ul C$ with this deformed module structure.
The assumption that $X_1X_y=X_y X_1=0$ for $2\leq y\leq m$ 
implies $H_1K_y=0$ and $H_y K_1=0$ which implies that $\ul C(\lam)$ also satisfies
the End-closed condition. Since $\ul C(\lam)$ is not supported at a point, it cannot have Hilbert function
$(1,4,1)$ so it is in the principal component,
and thus so is $\ul C= \ul C(0)$.

\subsubsection{Case  $\tdim \sum_{y,z}\tim(X_yX_z)>1$}
This hypothesis says  $H_{\ul C }(2)\geq 2$.
Since $H_{S/\tann(\ul{C})}(3) = 0$ also $H_{\ul{C}}(3) = 0$. We have
$H_{\ul C}(0)+ H_{\ul C}(1)+H_{\ul C}(2)=6$.
If $H_{\ul C}(0)=1$ then~\ref{rone} applies, so
assume $H_{\ul C}(0)\geq 2$. If $H_{\ul C}(1)=1$, then a near trivial case of
Macaulay's growth bound for modules \cite[Cor. 3.5]{MR1774095}, says $H_{\ul C }(2)<2$, so
the Hilbert function $H_{\ul C}$ is $(2,2,2)$, and the minimal number of
generators of $\ul{C}$ is $H_{\ul{C}}(0) = 2$.
Let $F = Se_1 \oplus Se_2$ be a free $S$-module of rank two. Fix an
isomorphism $\ul{C}  \simeq F/\cR$, where $\cR$ is the subspace generated by the relations.

We   briefly recall the apolarity theory for modules
from~\cite[\S4.1]{jelisiejew2021components}. Let $\tilde S = \BC[y_1, \ldots ,y_4]$ which we may use instead
of $S$ because $H_{S/\tann(\ul{C})}(1) = 4$.
Let $\tilde S^* = \bigoplus_j \thom(\tilde S_j,\BC) = :\BC[z_1, \ldots ,z_4]$ be the dual polynomial ring.
Let $F^*:=\bigoplus_j \thom(F_j,\BC) = \tilde S^*e_1^* \oplus \tilde S^*e_2^* = \BC[z_1, \ldots ,z_4]e_1^* \oplus
\BC[z_1, \ldots ,z_4]e_2^*$. The action of $\tilde S$ on $F^*$ is the usual
contraction action. In coordinates it is the
``coefficientless'' differentiation: $y_i^d(z_j^u)=\d_{ij}z_j^{u-d}$ when $u\geq d$ and is zero otherwise.
The subspace $\cR^{\perp} \subseteq F^*$ is an $\tilde S$-submodule.

Consider a minimal
set of generators of $\cR^\perp \subseteq F^*$. The assumption
$H_{\ul C}(2)=2$ implies there are two generators in degree two, write their
leading terms as
$\s_{11}e_1^*+\s_{12}e_2^*$ and $\s_{21}e_1^*+\s_{22}e_2^*$, with $\s_{uv}\in \tilde S_2$.
Then
$\tann(\ul C)\cap \tilde S_{\geq 2}=\langle \s_{11}\hd \s_{22}\rangle^\perp\cap \tilde S_{\geq 2}$. 
But $H_{\tilde S/\tann(\ul C)}(2)=1$, so all the $\s_{uv}$ must be a multiple of some $\s$ and
after changing bases we write the leading terms as $\s e_1^*$, $\s e_2^*$.
We see $\langle y_i\s e_1^*+\ldots , y_i\s e_2^*+\ldots, 1\leq i\leq
4\rangle\subseteq \cR^{\perp}$, where $y_i$ acts on $\tilde S^*$ by contraction
and the ``\ldots''  are lower order terms.
Now  $H_{\ul C}(1)=2$ says this is a $2$-dimensional space, i.e.,   that $\s$ is a square. Change coordinates
so $\s=z_1^2$. Thus the generators of $\cR^{\perp}$ include
$Q_1:=z_1^2 e_1^*+\ell_{11}e_1^* +\ell_{12}e_2^*, Q_2:=z_1^2 e_2^*+\ell_{21}e_1^*+ \ell_{22}e_2^*$ for
some linear forms $\ell_{uv}$. 
These two generators plus their contractions (by $ y_1,y_1^2$) span a six dimensional space, so these must be all the generators.
 Our module is thus a degeneration of    the module where the $z_1,\ell_{uv}$ are
 all independent linear forms.   
 Take a basis of the module $\cR^{\perp}\subseteq F^*$ as
 $Q_1,Q_2,y_1Q_1,y_1Q_2,y_1^2Q_1,y_1^2Q_2$.
 Then the matrix associated to the action of $y_1$ is
 $$
 \begin{pmatrix}
 0&0&1&0&0&0\\  0&0&0&1&0&0\\  0&0&0&0&1&0\\  0&0&0&0&0&1\\ 0&0&0&0&0&0\\ 0&0&0&0&0&0\end{pmatrix}
 $$
 and if we deform our module to a space
 where the linear forms $z_1,\ell_{uv}$ are all independent
 and  change bases such that $\ell_{11}=y_2^*$, $\ell_{12}=y_3^*$, $\ell_{21}=y_4^*$, 
 $\ell_{22}=y_5^*$, we may write our space of matrices as
  $$
 \begin{pmatrix}
 0&0&z_1&0&z_2&z_3\\  0&0&0&z_1&z_4&z_5\\  0&0&0&0&z_1&0\\  0&0&0&0&0&z_1\\ 0&0&0&0&0&0\\ 0&0&0&0&0&0\end{pmatrix}
 $$
 Using \emph{Macaulay2 VersalDeformations}  \cite{MR2947667} we find that this tuple
 is a member of the following family of tuples of commuting matrices
 parametrized by $\lambda\in \BC$. Their commutativity is straightforward if
 tedious to verify by hand
  $$
 \begin{pmatrix}
 0&\lam^2 z_4&z_1&-\lam z_5&z_2&z_3\\  -\lam z_1&0&-\lam z_4&z_1&z_4&z_5\\  
 -\lam^3z_4&\lam^2z_1&0&\lam^2z_4&z_1&-\lam z_5\\  0&0&0&-\lam^2z_5&\lam(z_2-z_4)&\lam z_3+z_1\\ 
 0&0&0&0&-\lam^2z_5&0\\ 0&0&0&0&0&-\lam^2z_5\end{pmatrix}.
 $$
 Here there are two eigenvalues, each with multiplicity three, so the deformed module
 is a direct sum of two three dimensional modules, each of which thus has
 an associated algebra with at most three
 generators and we conclude by \ref{rthree}.  \qed

\section{Minimal cactus and smoothable rank}\label{minsmoothsect}
 For a degree $m$ zero-dimensional subscheme $\Spec(R)$ with an
 embedding $\Spec(R)\subseteq Seg(\BP A\times \BP B\times\BP C)\subseteq
\BP(A\ot B\ot C)$, its \emph{span} $\langle \Spec(R)\rangle$ is the
zero set of $I_1(\Spec(R))\subseteq A^*\ot B^*\ot C^*$, where $I_1(\Spec(R))$ is the degree one component
of the homogeneous ideal $I$ of the embedded $\Spec(R)$.
  We say that the embedding $\Spec(R)\subseteq
Seg(\BP A\times \BP B\times\BP C)$ is
\emph{nondegenerate} if its span projects surjectively to $\mathbb{P} A $,
$\mathbb{P} B $, and  $\mathbb{P} C $. For a nondegenerate embedding,
the maps $\Spec(R)\to \BP A$, $\Spec(R)\to \BP B$, $\Spec(R) \to \BP C$, induced by projections, are
embeddings as well. If $\langle \Spec(R)\rangle$ contains
a concise tensor, then the embedding of $\Spec(R)$ is automatically
nondegenerate.

The \emph{cactus rank} \cite{MR3121848} of $T\in A\ot B\ot C$ is the smallest $r$ such that there exists
a degree $r$ zero-dimensional subscheme $\Spec(R)\subseteq Seg(\BP A\times \BP B\times\BP C)\subseteq
\BP(A\ot B\ot C)$ with $[T]\in \langle Spec(R)\rangle$. (Recall that the smoothable rank has the same definition
except that one additionally requires $R$ to be smoothable.)

Given a degree $\rho$
zero-dimensional scheme  $R$, for each $\varphi\in R^*$, one gets a tensor
$T^\varphi\in R^*\ot R^*\ot R^* \simeq \BC^\rho\ot \BC^\rho\ot \BC^\rho$
defined by $T^\varphi(r_1,r_2,r_3):=\varphi(r_1r_2r_3)$.
Given any non-degenerate embedding $\Spec(R)\subseteq Seg(\BP A\times \BP
B\times\BP C)\subseteq \BP(A\ot B\ot C)$, the space of  tensors $T^\varphi$ is
isomorphic to the space of tensors  $\langle \Spec(R)\rangle$ as will be
    shown in the proof of Proposition~\ref{ref:cactusRank:prop} below.

In this section we show that the   scheme (resp.~smoothable scheme)  $\Spec(R)$
which witnesses that a tensor
$T\in
A\ot B\ot C$ has minimal cactus (resp.~smoothable)  rank is unique, in fact, the algebra
$R$ is isomorphic to  $\alg{T}$.

\begin{proposition}\label{ref:cactusRank:prop}
          Let $\Spec(R)$ be a degree $m$ zero-dimensional subscheme
          and let $T\in A\ot B\ot C$. The following are
      equivalent:
    \begin{enumerate}
        \item\label{it:cactusRank:one} There exists a nondegenerate embedding $\Spec(R)\subseteq
          Seg(\BP A\times \BP B\times\BP C)$ with $T\in
            \langle \Spec(R)\rangle$, so in particular $T$ has  cactus
        rank at most $m$.
        \item\label{it:cactusRank:two} there exists  $\varphi\in R^*$ such that $T$
            is isomorphic to the tensor in $R^*\ot R^*\ot R^*$ given by the
            trilinear map
            $(r_1,r_2,r_3)\mapsto \varphi(r_1r_2r_3)$.
    \end{enumerate}
    If $T$ is concise and satisfies the above, then it is $1$-generic and has cactus rank $m$.
\end{proposition}
\begin{proof}
      We first  show~\ref{it:cactusRank:one} implies~\ref{it:cactusRank:two}.
    An embedding $\Spec(R)\subseteq \mathbb{P} A$ with $\langle \Spec(R)\rangle =
    \mathbb{P} A$ is induced from an embedding
    $\Spec(R)\subseteq A$ with $\langle \Spec(R)\rangle = A$, which in turn induces
    a vector space  isomorphism  $\tau_a\colon A^*\to R\isom \Sym(A^*)/I_{R,A}$ as follows: let 
    $I_{R,A}$ denote 
        the ideal of $\Spec(R)\subseteq A$, then  $\tau_a(\a) := \a\tmod{I_{R,A}}$.
    Hence, a nondegenerate embedding of $\Spec(R)$ induces a
    triple of vector space  isomorphisms   $\tau_a\colon A^*\to R$,
$\tau_b\colon B^*\to R$, $\tau_c\colon C^*\to R$.

More generally,    for each $(s,t,u)$, with $s,t,u\geq 1$,  the
  map
$$
\tau_{s,t,u}: S^sA^*\ot S^tB^*\ot S^uC^* \ra S^sA^*\ot S^tB^*\ot S^uC^*/(I_{R,A\ot B\ot C} )_{s,t,u}
$$
is a surjection onto $R\isom S^sA^*\ot S^tB^*\ot S^uC^*/(I_{R,A\ot B\ot C} )_{s,t,u}$,
and these maps
are all compatible with multiplication, in particular $\t_{1,1,1}(\a\ot \b\ot \g)=\t_a(\a)\t_b(\b)\t_c(\g)$.
Then
$$\langle \Spec(R)\rangle = (\ker \tau_{1,1,1})^{\perp}\subseteq (A^*\ot
    B^*\ot C^*)^* = A\ot B\ot C.
    $$
     By duality, the
     space $(\ker \tau_{1,1,1})^{\perp}$ is the image of the map  $R^*\to A\ot B\ot C$
    defined by requiring that
        $\varphi\in R^*$ maps  to the trilinear form $(\alpha, \beta, \gamma)\mapsto
    \varphi(\tau_a(\alpha)\tau_b(\beta)\tau_c(\gamma))$.

      If $T$ is the image
    of $\varphi$, then it is isomorphic to the trilinear map $(r_1, r_2, r_3)\mapsto
    \varphi(r_1r_2r_3)$ via $ \tau_a^{\bt}\ot  \tau_b^{\bt}\ot  \tau_c^{\bt}$, proving~\ref{it:cactusRank:one} implies~\ref{it:cactusRank:two}.

  Assuming~\ref{it:cactusRank:two}, choose vector space isomorphisms
  $\t_a,\t_b,\t_c$ and
  define a map $A^*\ot B^*\ot C^* \ra R$,
  by $\a\ot \b\ot \g\mapsto \t_a(\a)\t_b(\b)\t_c(\g)$. (For readers familiar with border apolarity, the kernel of this map
is $I_{111}$.)
  Then extend it to $S^sA^*\ot S^tB^*\ot S^uC^*$ by $ \t_a(\a_1\cdots
  \a_s)=\t_a(\a_1)\cdots \t_a(\a_i)$ and similarly. This yields the
  required nondegenerate embedding of $\Spec(R)$. The tensor $T'$
  corresponding to $(\alpha, \beta, \gamma)\mapsto
    \varphi(\tau_a(\alpha)\tau_b(\beta)\tau_c(\gamma))$ is isomorphic to $T$
    and lies in $\langle
    \Spec(R)\rangle$. This proves~\ref{it:cactusRank:one}.

    Finally, if   $T$ satisfies the above, then it is isomorphic to $(r_1, r_2, r_3)\mapsto
    \varphi(r_1r_2r_3)$ for some $\varphi$. If $T$ is additionally concise,
    then for every $r\in R$ there exists an $r'\in R$ such that
    $\varphi(rr')\neq 0$. Hence the map $(r_1, r_2)\mapsto \varphi(r_1, r_2)$
    has full rank. But this map is $\varphi(1_R)$. This shows that $T$ is
    $1$-generic. It has cactus rank at least $m$ by conciseness and at
    most $m$ by assumption.
\end{proof}

 In
particular, a concise tensor $T\in \BC^m\ot \BC^m\ot \BC^m$ has \emph{minimal
    smoothable rank} if there exists  a smoothable degree
$m$ algebra $R$ satisfying the conditions of
Proposition~\ref{ref:cactusRank:prop}.
\begin{theorem}\label{ref:smoothableRank:thm}
    Let $T\in \BC^m\ot \BC^m\ot \BC^m$  be a concise tensor. The following are equivalent
    \begin{enumerate}
        \item\label{it:smoothableRankOne} $T$ has minimal smoothable rank,
        \item\label{it:smoothableRankTwo} $T$ is $1$-generic,
            111-sharp and its
            111-algebra is smoothable and Gorenstein.
        \item\label{it:smoothableRankThree} $T$ is $1$-generic,
            111-abundant and its
            111-algebra is smoothable.
    \end{enumerate}
\end{theorem}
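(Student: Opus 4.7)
My plan is to prove the cycle (3)$\Rightarrow$(2)$\Rightarrow$(1)$\Rightarrow$(3), since (2)$\Rightarrow$(3) is immediate (111-sharp is stronger than 111-abundant). Throughout I will set $R := \alg{T}$ and translate each condition into the module-theoretic language of \S\ref{111algpfsect} and \S\ref{111abcharsect}. The guiding principle is that for a concise $1$-generic tensor, Proposition~\ref{ex:1AgenericAndModules} collapses all three modules $\ul A^*,\ul B^*,\ul C^*$ to a single copy of $R$, so $T$ is encoded by one $R$-module map $R\to R^*$, and minimal smoothable rank becomes the question of whether this map is realized by a Gorenstein functional on a smoothable $R$.

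For (3)$\Rightarrow$(2), I would apply Proposition~\ref{ex:1AgenericAndModules} to each of the three factors to obtain $\ul A^*\cong\ul B^*\cong\ul C^*\cong R$ as $R$-modules; combined with the 111-abundance inequality $\dim R\geq m$ this forces $\dim R=m$, so $T$ is 111-sharp. Dualizing yields $\ul A\cong\ul B\cong\ul C\cong R^*$, and the surjective $R$-module map $\ul A^*\ot_R\ul B^*\to\ul C$ of Theorem~\ref{ref:111abundantChar:cor} becomes a surjection $R\ot_R R=R\to R^*$ between equidimensional modules, necessarily an isomorphism. This is the definition of Gorenstein, and smoothability is part of the hypothesis. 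For (2)$\Rightarrow$(1), any such isomorphism has the form $r\mapsto(s\mapsto\varphi(rs))$ for some $\varphi\in R^*$, so after unwinding the identifications $T$ becomes the trilinear form $(r_1,r_2,r_3)\mapsto\varphi(r_1r_2r_3)$ on $R$; smoothability of $R$ combined with Proposition~\ref{ref:cactusRank:prop} then yields minimal smoothable rank.

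For (1)$\Rightarrow$(3), Proposition~\ref{ref:cactusRank:prop} furnishes a smoothable degree-$m$ algebra $R'$ and $\varphi\in(R')^*$ with $T\cong(r_1,r_2,r_3)\mapsto\varphi(r_1r_2r_3)$, and asserts that $T$ is $1$-generic. Transporting the multiplication of $R'$ on itself through the identifications $A^*,B^*,C^*\cong R'$ produces commuting triples in $\End(A)\times\End(B)\times\End(C)$, and commutativity of $R'$ makes the trilinear form $R'$-balanced; this yields an injective algebra homomorphism $R'\hookrightarrow\alg{T}$ and hence $\dim\alg T\geq m$, proving 111-abundance. The same dimension argument used in (3)$\Rightarrow$(2) (which only uses $1$-genericity together with 111-abundance, not smoothability) then forces $\dim\alg T=m$, so $R'=\alg T$ inherits smoothability from $R'$. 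The main technical obstacle is bookkeeping between $A,B,C$ and their various realizations as $R$ or $R^*$: I need to verify that the module structures transported via $\tau_a,\tau_b,\tau_c$ are compatible with the ones built into $\alg T$, so that the Gorenstein isomorphism really does assemble $T$ back into the form $\varphi(r_1r_2r_3)$ and, conversely, so that the three multiplication actions of $R'$ really assemble into a single triple in $\alg T$.
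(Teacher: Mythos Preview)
Your proposal is correct and follows essentially the same approach as the paper: both arguments hinge on Proposition~\ref{ref:cactusRank:prop} to translate minimal smoothable rank into the trilinear form $\varphi(r_1r_2r_3)$, on Proposition~\ref{ex:1AgenericAndModules}/Proposition~\ref{1Ageneric111} to identify $\ul A^*,\ul B^*,\ul C^*$ with $R$ in the $1$-generic case, and on the Gorenstein criterion $R\cong R^*$. The only differences are cosmetic---you run the cycle as $(3)\Rightarrow(2)\Rightarrow(1)\Rightarrow(3)$ and phrase things via Theorem~\ref{ref:111abundantChar:cor}, whereas the paper does $(1)\Rightarrow(2)\Rightarrow(3)\Rightarrow(1)$ and, in the step corresponding to your $(1)\Rightarrow(3)$, obtains $111$-abundance by invoking minimal border rank rather than by directly exhibiting the injection $R'\hookrightarrow\alg{T}$ as you do.
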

We emphasize that in Theorem \ref{ref:smoothableRank:thm} one does not need to find the smoothable scheme
to show the tensor has minimal smoothable rank, which makes the theorem effective by
reducing  the question of determining  minimal smoothable rank to proving smoothability of a
given algebra.

\begin{proof}[Proof of Theorem~\ref{ref:smoothableRank:thm}]
    Suppose~\ref{it:smoothableRankOne} holds and so there exists  a smoothable algebra
    $R$ and an embedding of it  into $Seg(\BP A\times \BP B\times\BP C)$ with $T\in
    \langle \Spec(R)\rangle$. By
    Proposition~\ref{ref:cactusRank:prop}   $T$ is $1$-generic and
    isomorphic to the tensor in the vector space $R^*\ot R^*\ot R^*$ given by the trilinear map $(r_1,r_2,r_3)\mapsto
    \varphi(r_1r_2r_3)$   for some  functional $\varphi\in
    R^*$, in particular $T\in \Hom(R\ot R\ot R, \BC)$. Suppose that there exists a nonzero $r\in R$ such that
    $\varphi(Rr) = 0$. Then for all $r_1,r_2\in R$, $(r_1, r_2, r)\mapsto 0$ so $T$ is not concise.
    Hence no such $r$ exists and so $\varphi$ is nondegenerate. This shows that $R$ is Gorenstein.

    For an
    element $r\in R$, the multiplication by $r$ on the first position gives a
    map
    \[
        \mu_{r}^{(1)}\colon\Hom(R\ot R\ot R, \BC)\to \Hom(R\ot R\ot R, \BC)
    \]
    and similarly we obtain $\mu_r^{(2)}$ and $\mu_r^{(3)}$. Observe that for
    $i=1,2,3$ and every $r\in R$ the map corresponding to the tensor
    $\mu_r^{(i)}(T)$ is the composition of the multiplication $R\ot R\ot R\to
    R$, the multiplication by $r$ map $R\to R$ and $\varphi\colon R\to \BC$.
    Therefore $\mu_r^{(1)}(T) = \mu_r^{(2)}(T) = \mu_r^{(3)}(T)$. Moreover,
    for any nonzero $r$ we have $\mu_r^{(i)}(T)\neq0$ since $\varphi$ is
    nondegenerate. This shows that $\langle\mu_r^{(i)}(T)\ |\ r\in R\rangle$
    is an $m$-dimensional subspace of $\alg{T}\cdot T\subseteq A\ot B\ot C$.

    Since $T$ has minimal smoothable rank, it has minimal border rank so it
    is  111-abundant and
    by Proposition~\ref{1Ageneric111} is it 111-sharp, so
    its 111-algebra is $\langle\mu_r^{(i)}(T)\ |\ r\in R\rangle$, which is
    isomorphic to $R$. This proves \ref{it:smoothableRankOne}  implies~\ref{it:smoothableRankTwo}.
    That~\ref{it:smoothableRankTwo} implies~\ref{it:smoothableRankThree} is
    vacuous.

    Suppose~\ref{it:smoothableRankThree} holds and take $R=\alg{T}$. 
    Then $T$ is $111$-sharp by   Proposition~\ref{1Ageneric111},
    which also implies the tensor $T$ is
    isomorphic to the multiplication tensor of $R$.
    The algebra $R$ is Gorenstein as $T$ is $1$-generic (see~\S\ref{summarysect}). 
    Since $R$ is Gorenstein, the
    $R$-module $R^*$ is isomorphic to $R$. Take one such isomorphism
    $\Phi\colon R\to
    R^*$ and let $\varphi = \Phi(1_R)$. Then the composition $R\ot
    R\ot R\to R\to \BC$ can be rewritten as $R\ot R\to R\to R^*$, where the
    first map is the multiplication and the second one sends $r$ to
    $r\varphi$; this second map is equal to $\Phi$. Composing further with $\Phi^{-1}$ we obtain a map $R\ot R\to
    R\to R^*\to R$ which is simply the multiplication. All this shows that the
    tensor in $R^*\ot R^*\ot R^*$ associated to $(R,\varphi)$ is isomorphic to
    the multiplication tensor of $R$, hence to $T$. By
    Proposition~\ref{ref:cactusRank:prop} and smoothability of $R$ such a
    tensor has minimal smoothable rank.
\end{proof}

    \begin{remark}
        There is a version of Theorem~\ref{ref:smoothableRank:thm} without
        smoothability assumptions: a concise tensor has minimal cactus rank
        if and only if it is $1$-generic and 111-abundant with Gorenstein 111-algebra.
    \end{remark}

\def\cdprime{$''$} \def\cprime{$'$} \def\cprime{$'$} \def\cprime{$'$}
  \def\Dbar{\leavevmode\lower.6ex\hbox to 0pt{\hskip-.23ex \accent"16\hss}D}
  \def\cprime{$'$} \def\cprime{$'$} \def\cdprime{$''$} \def\cprime{$'$}
  \def\cprime{$'$} \def\Dbar{\leavevmode\lower.6ex\hbox to 0pt{\hskip-.23ex
  \accent"16\hss}D} \def\cprime{$'$} \def\cprime{$'$} \def\cprime{$'$}
  \def\cprime{$'$} \def\Dbar{\leavevmode\lower.6ex\hbox to 0pt{\hskip-.23ex
  \accent"16\hss}D} \def\cprime{$'$} \def\cprime{$'$}
\providecommand{\bysame}{\leavevmode\hbox to3em{\hrulefill}\thinspace}
\providecommand{\MR}{\relax\ifhmode\unskip\space\fi MR }
\providecommand{\MRhref}[2]{%
  \href{http://www.ams.org/mathscinet-getitem?mr=#1}{#2}
}
\providecommand{\href}[2]{#2}

\end{document}